\documentclass[12pt]{article}

\usepackage{amsthm}
\usepackage{amsfonts}
\usepackage{amsmath}
\usepackage{amssymb}
\usepackage{color}

\usepackage{pictex}
\usepackage{multicol,url}

\begin{document}
 \newcounter{thlistctr}
 \newenvironment{thlist}{\
 \begin{list}%
 {\alph{thlistctr}}%
 {\setlength{\labelwidth}{2ex}%
 \setlength{\labelsep}{1ex}%
 \setlength{\leftmargin}{6ex}%
 \renewcommand{\makelabel}[1]{\makebox[\labelwidth][r]{\rm (##1)}}%
 \usecounter{thlistctr}}}%
 {\end{list}}

\thispagestyle{empty}

\newtheorem{Lemma}{\bf LEMMA}[section]
\newtheorem{Theorem}[Lemma]{\bf THEOREM}
\newtheorem{Claim}[Lemma]{\bf CLAIM}
\newtheorem{Corollary}[Lemma]{\bf COROLLARY}
\newtheorem{Proposition}[Lemma]{\bf PROPOSITION}
\newtheorem{Example}[Lemma]{\bf EXAMPLE}
\newtheorem{Fact}[Lemma]{\bf FACT}
\newtheorem{definition}[Lemma]{\bf DEFINITION}
\newtheorem{Notation}[Lemma]{\bf NOTATION}
\newtheorem{remark}[Lemma]{\bf REMARK}

\title{Symmetric Implication Zroupoids and Weak Associative Laws}
             
\author{Juan M. Cornejo and Hanamantagouda P. Sankappanavar} 

\date{}

\maketitle

\numberwithin{equation}{section}

\thispagestyle{empty}

\begin{abstract}
 An algebra $\mathbf A = \langle A, \to, 0 \rangle$, where $\to$ is binary and $0$ is a constant, is called 
	 an {\it implication zroupoid} ($\mathcal I$-zroupoid, for short) if $\mathbf A$ satisfies the identities:	
	 	
	 	$(x \to y) \to z \approx ((z' \to x) \to (y \to z)')'$
	 and   $ 0'' \approx 0$, where $x' : = x \to 0$. 
	 An implication zroupoid is {\it symmetric} if it satisfies $x'' \approx x$ and $(x \to y')' \approx (y \to x')'$.  The variety of symmetric $\mathcal I$-zroupoids is denoted by $\mathcal S$.  
	We began a systematic analysis of weak associative laws of  length $\leq 4$ in \cite{cornejo2016BolMoufang}, by examining the identities of Bol-Moufang type in the context of the variety 
$\mathcal S$.

In this paper we complete the analysis by investigating the rest of the   
weak associative laws of length $\leq 4$ relative to $\mathcal S$.  We show that, of the 155 
subvarieties of $\mathcal S$ defined by the weak associative laws of size $\leq 4$,  
there are exactly $6$ distinct ones.
We also give an explicit description of the poset of the (distinct) subvarieties of $\mathcal S$ defined by weak associative laws of length $\leq 4$. 
\end{abstract}

\section{{\bf Introduction}} \label{SA}

In 1934, Bernstein gave a system of axioms for Boolean algebras in \cite{bernstein1934postulates} using implication alone.  
Even though his system was not equational, it is not hard to see that one could easily convert it into an equational one by using an additional constant.  
In 2012, the second author extended this ``modified Bernstein's theorem''
to De Morgan algebras in \cite{sankappanavarMorgan2012}.  
Indeed, he shows in \cite{sankappanavarMorgan2012} that the varieties of De Morgan algebras, Kleene algebras, and  Boolean algebras are term-equivalent, respectively, to the varieties, $\mathcal{DM}$, $\mathcal{KL}$, and $\mathcal{BA}$ (defined below)  
whose defining axioms use only an implication $\to$ and a constant $0$.    

The 
primary role played by the identity (I): $(x \to y) \to z \approx ((z' \to x) \to (y \to z)')'$, where $x' : = x \to 0$, 
in the axiomatization of each of those new varieties 
motivated the second author to 
introduce a new equational class of algebras called ``Implication zroupoids'' in   
\cite{sankappanavarMorgan2012}.  It also turns out that this new variety contains the variety $\mathcal{SL}$ (defined below) which is term-equivalent to the variety of $\lor$-semilattices with the least element $0$ (see \cite{cornejo2015implication}).

%

\begin{definition}
	An algebra $\mathbf A = \langle A, \to, 0 \rangle$, where $\to$ is binary and $0$ is a constant, is called a {\rm zroupoid}.
	A zroupoid $\mathbf A = \langle A, \to, 0 \rangle$ is an {\rm Implication zroupoid} \rm ($\mathcal I$-zroupoid, for short\rm) if $\mathbf A$ satisfies:
	\begin{itemize}
		\item[\rm{(I)}] 	$(x \to y) \to z \approx ((z' \to x) \to (y \to z)')'$, where $x' : = x \to 0$,
		\item[{\rm (I$_{0}$)}]  $ 0'' \approx 0$.
	\end{itemize}
$\mathcal{I}$ denotes the variety of implication zroupoids.
The varieties $\mathcal{DM}$ 
and $\mathcal{SL}$ 
are defined relative to $\mathcal{I}$, respectively, by the following identities: 
$$
\begin{array}{ll}
{\rm (DM)} & (x \to y) \to x \approx x \mbox{ (De Morgan Algebras)}; \\
{\rm (SL)}  & x' \approx x \mbox{ and } x \to y \approx y \to x.  
\end{array}
$$
The varieties 
$\mathcal{KL}$ and $\mathcal{BA}$ 
are defined relative to $\mathcal{DM}$, respectively, by the following identities: 
$$
\begin{array}{ll}
{\rm (KL)} & (x \to x) \to (y \to y) \approx y \to y  \mbox{ (Kleene algebras)} ; \\
{\rm (BA)} & x \to x \approx 0'  \mbox{ (Boolean algebras)}.\\
\end{array}
$$ 
\end{definition}

As proved in \cite{sankappanavarMorgan2012}, the variety $\mathcal{I}$ 
generalizes the variety of De Morgan algebras 
and exhibits several interesting properties; 
for example, 
the identity $x''' \to y \approx x' \to y$ holds in $\mathcal{I}$.  
Several new subvarieties of $\mathcal{I}$ are also introduced and investigated in
\cite{sankappanavarMorgan2012}.  
The (still largely unexplored) lattice of subvarieties of $\mathcal{I}$ seems to be fairly complex.  In fact, Problem 6 of \cite{sankappanavarMorgan2012} calls for an investigation of the structure of the lattice of subvarieties of $\mathcal{I}$.

The papers  \cite{cornejo2015implication}, \cite{cornejo2016order} and \cite{cornejo2016semisimple} have addressed further, but still partially, the above-mentioned problem by introducing new subvarieties of $\mathcal{I}$ and investigating relationships among them.  
The (currently known) size of the poset of subvarieties of $\mathcal{I}$ is at least 24; but it is still unknown whether the lattice of subvarieties is finite or infinite.  Two of the subvarieties of $\mathcal{I}$ are:
 $\mathcal{I}_{2,0}$ and $\mathcal{MC}$ which are defined relative to $\mathcal{I}$, respectively, by the following identities, where $x \land y := (x \to y')' $ :\\     
(I$_{2,0}) \quad  x'' \approx x$;\\
(MC)\  $x \land y \approx y \land x$. \\
For a somewhat more detailed summary of the results contained in the above-mentioned papers, we refer the reader to the Intoduction of \cite{cornejo2016derived}.

\begin{definition}
	Let $\mathbf{A} \in \mathcal{I}$.  $\mathbf{A}$ is {\rm involutive} if $\mathbf{A} \in \mathcal{I}_{2,0}$.  $\mathbf{A}$ is {\rm meet-commutative} if $\mathbf{A} \in \mathcal{MC}$.  $\mathbf{A}$ is {\rm symmetric} if $\mathbf{A}$ is both involutive and meet-commutative.  Let $\mathcal{S}$ denote the variety of symmetric $\mathcal{I}$-zroupoids.  In other words, $\mathcal{S} = \mathcal{I}_{2,0} \cap \mathcal{MC}$.
\end{definition}

In the present paper we are interested in the subvarieties of $\mathcal{S}$ defined by certain {\it weak associative laws}. 
A precise definition of a {\it weak associative law} appears in \cite{Ku96}, which is essentially restated below.

\begin{definition}
	Let $n \in \mathbb{N}$ and let $\mathcal{L} := \langle \times \rangle$, where $\times$ is a binary operation symbol.  A (groupoid) term in the language $\mathcal{L}$ 
is of length $n$ if the number of occurrences of variables (not necessarily distinct) is $n$.   
A weak associative law of length $n$ in $\mathcal{L}$   
is an identity of the form $p \approx q$, where $p$ and $q$ are terms of length $n$ and contain the same variables that occur in the same order in both 
	$p$ and $q$ \rm (only the bracketing is possibly different\rm).     
	
\end{definition}




The following (general) problem was raised in \cite{cornejo2016BolMoufang} (In the sequel we use the words ``law'' and ``identity'' interchangeably). \\ 

\noindent PROBLEM:  Let $\mathcal V$ be a given variety of algebras (whose language includes a  binary operation symbol, say, $\times$).  
Investigate the subvarieties of $\mathcal V$ defined by   
weak associative laws {\rm (}with respect to $\times${\rm)} 
and their mutual relationships.\\ 


 Special cases of the above problem have already 
been considered in the literature, wherein the weak associative laws chosen are the identities of Bol-Moufang type (i.e., weak associative laws of length $4$ with $3$ distinct variables),
and the variety ${\mathcal V}$ is chosen to be the variety of quasigroups or the variety of loops
(For more information about these identities in the context of quasigroups and loops see \cite{Fe69}, \cite{Ku96}, \cite{PV05a}, \cite{PV05b}). 

Let $W$ denote the set of identities of weak associative laws of size $\leq 4$.   The systematic notation given in the next definition for the identities in $W$ is influenced by the notation developed in \cite{PV05a} for Bol-Moufang identities. 

 Without loss of generality, we will assume that the variables in the terms $t_1$ and $t_2$ occur 
alphabetically in any weak associative identity $t_1 \approx t_2$.  Given a word $X$ of variables, we refer to each possible way of bracketting X that will transform $X$ into a term, as ``bracketting''; and we assign a number to each such bracketting and call it the ``bracketting number'' of that term. 

\begin{definition} 
Let $n,m,p,q \in \mathbb N$ and let $X$ denote a word of length $n$ in which the (not necessarily distinct) $m$ variables occur alphabetically.  
We denote by {\rm (nmXpq)} the weak associative identity $t_1 \approx t_2$ of length $n$ with $m$ distinct variables, where $t_1$ and $t_2$ are terms obtained from $X$ whose bracketing numbers are $p$ and $q$.       
We denote by $nm\mathcal X pq$ the variety defined, relative to $\mathcal S$, by the weak associative identity {\rm (nmXpq)}.  
\end{definition}

\begin{Example} Let $X$ be the word  \rm A:= $\langle x, x, x  \rangle$.  Then there are two possible brackettings (numbered $1$ and $2$): 

\rm1: $a \to (a \to a)$, and
\rm 2: $(a \to a) \to a$. \\
\  \\
Then (31A12) denotes the identity $x \to (x \to x) \approx (x \to x) \to x$ of length 3 with one variable, and with bracketting numbers 1 and 2.  Also, $31\mathcal A12$ denotes the subvariety of $\mathcal I $ defined by (31A12).  Here is another example: if $X$ is the word B: $\langle x, y, x, z \rangle$, then (43B23) denotes the identity $x \to ((y \to x) \to z)$ $\approx$ $(x \to y) \to (x \to z)$ of length 4 with 3 distinct variables and with bracketting numbers 2 and 3 in the listing of possible brackettings given in Section \ref{section_length4_3variables} below.   
\end{Example}
 


Recall that Bol-Moufang identities are precisely the weak associative laws of the form (43Xpq).
It was shown in \cite{cornejo2016BolMoufang} that 
 there are 4 nontrivial subvarieties of $\mathcal S$ of Bol-Moufang type that are distinct from each other (out of $60$ varieties), giving the Hasse diagram of the poset formed by them, together with the variety $\mathcal{BA}$ (which is contained in some of them).

In this paper we will complete that analysis by examining the rest of the 
weak associative laws of size $\leq 4$ relative to $\mathcal S$.  Clearly, $x \approx x$ and $x \to y \approx x \to y$ are the only identitities of length 1 and 2 respectively, which are trivial.  So, we will consider the identities of length 3 and 4.  
In Section \ref{section_main_theorem}, we show, as our main result, that, of the 155 
subvarieties of $\mathcal S$ defined by the weak associative laws of size $\leq 4$,  
there are exactly $6$ distinct ones.
We also give an explicit description, by a Hasse diagram, of the poset of the (distinct) subvarieties of $\mathcal S$ defined by weak associative laws of length $\leq 4$. 

We would like to acknowledge that the software ``Prover 9/Mace 4'' developed by McCune \cite{Mc} has been useful to us in some of our findings presented in this paper.  We have used it to find examples and to check some conjectures.\\

\section{Preliminaries and Properties  of $\mathcal S$}

\indent We refer the reader to the standard references
\cite {balbesDistributive1974}, \cite{burrisCourse1981} and \cite{Ra74} for concepts and results used, but not explained, in this paper.

Recall from \cite{sankappanavarMorgan2012} that $\mathcal{SL}$ is the variety of semilattices with a least element $0$. It was shown in \cite{cornejo2015implication} that $\mathcal{SL}   
= \mathcal{C} \cap \mathcal{I}_{1,0} $.

The two-element algebras $\mathbf{2_s}$, $\mathbf{2_b}$ were introduced in \cite{sankappanavarMorgan2012}.  Their  
operations $\to$ are respectively as follows:  \\

%
\begin{minipage}{0.3 \textwidth}
	\begin{tabular}{r|rr}
		$\to$: & 0 & 1\\
		\hline
		0 & 0 & 1 \\
		1 & 1 & 1
	\end{tabular} 
	
\end{minipage}
\begin{minipage}{0.3 \textwidth}
	\begin{tabular}{r|rr}
		$\to$: & 0 & 1\\
		\hline
		0 & 1 & 1 \\
		1 & 0 & 1
	\end{tabular}   
\end{minipage}           
\ \\ \ \\ \ \\
Recall that $\mathcal{V}(\mathbf{2_b}) = \mathcal{BA}$.
Recall also from \cite[Corollary 10.4]{cornejo2015implication} \label{CorSL} that
$\mathcal{V}(\mathbf{2_s}) = \mathcal{SL}$.  The following lemma easily follows from the definition of $\land$ given earlier in the Introduction.

\begin{Lemma}  \bf \cite{cornejo2015implication} \label{lemma_SL_I10_C}
	$\mathcal{MC} \cap \mathcal{I}_{1,0} \subseteq  \mathcal{C} \cap \mathcal{I}_{1,0} = \mathcal{SL}$.  
\end{Lemma}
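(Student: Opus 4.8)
The plan is to split the statement into its two parts and observe that the equality $\mathcal{C} \cap \mathcal{I}_{1,0} = \mathcal{SL}$ is exactly the result quoted from \cite{cornejo2015implication}, so that all the real work lies in establishing the inclusion $\mathcal{MC} \cap \mathcal{I}_{1,0} \subseteq \mathcal{C} \cap \mathcal{I}_{1,0}$. Since both sides of this inclusion are intersected with $\mathcal{I}_{1,0}$, it suffices to show that every algebra lying in $\mathcal{MC} \cap \mathcal{I}_{1,0}$ is commutative, i.e. satisfies $x \to y \approx y \to x$ (recall $\mathcal{C}$ is the commutative subvariety); the desired inclusion then follows immediately by intersecting with $\mathcal{I}_{1,0}$ again.

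To do this, I would fix $\mathbf{A} \in \mathcal{MC} \cap \mathcal{I}_{1,0}$ and exploit the defining identity $x' \approx x$ of $\mathcal{I}_{1,0}$, which unwinds to $x \to 0 \approx x$. The point is that, in the presence of $x' \approx x$, every single prime can be erased, so the double negation built into the meet operation simply disappears. Concretely, unfolding the definition $x \land y := (x \to y')'$, applying $y' \approx y$ inside, and then applying $z' \approx z$ to the outer prime with $z := x \to y$, yields
\begin{equation*}
x \land y \approx (x \to y')' \approx (x \to y)' \approx x \to y .
\end{equation*}
By the same computation $y \land x \approx y \to x$. Hence the meet-commutativity identity (MC), namely $x \land y \approx y \land x$, collapses on $\mathbf{A}$ to precisely $x \to y \approx y \to x$.

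This shows $\mathbf{A} \in \mathcal{C}$, and since $\mathbf{A} \in \mathcal{I}_{1,0}$ by hypothesis, we get $\mathbf{A} \in \mathcal{C} \cap \mathcal{I}_{1,0}$, establishing the inclusion; combining it with the cited equality finishes the proof. I do not anticipate any genuine obstacle: the only point requiring a moment's care is recognizing that $x' \approx x$ removes the \emph{outer} prime in the definition of $\land$, which is exactly why the statement ``easily follows from the definition of $\land$''. It is nonetheless worth double-checking that no further $\mathcal I$-zroupoid axiom is needed for the rewriting, that both primes are legitimately removable under $x' \approx x$, and that the order of the two variables is preserved so that (MC) really collapses to commutativity rather than to a triviality.
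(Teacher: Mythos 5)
Your proposal is correct and is exactly the argument the paper intends: the equality $\mathcal{C} \cap \mathcal{I}_{1,0} = \mathcal{SL}$ is quoted from \cite{cornejo2015implication}, and the inclusion is the ``easy consequence of the definition of $\land$'' that the paper alludes to, namely that $x' \approx x$ erases both primes in $(x \to y')'$ so that (MC) collapses to $x \to y \approx y \to x$. Your double-check points (both primes removable, variable order preserved) are sound, so nothing is missing.
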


\begin{Lemma} {\bf \cite[Theorem 8.15]{sankappanavarMorgan2012}} \label{general_properties_equiv}
	Let $\mathbf A$ be an  $\mathcal{I}$-zroupoid. Then the following are equivalent:
	\begin{enumerate}
		\item $0' \to x \approx x$, \label{TXX} 
		\item $x'' \approx x$,
		\item $(x \to x')' \approx x$, \label{reflexivity}
		\item $x' \to x \approx x$. \label{LeftImplicationwithtilde}
	\end{enumerate}
\end{Lemma}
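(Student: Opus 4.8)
The plan is to prove the four conditions equivalent by a cycle of implications, using repeated specializations of the defining identity (I) together with (I$_0$). Before attacking the implications I would first record a couple of instances of (I) that will serve as the workhorses. Setting $z := 0$ in (I) and writing $x' = x \to 0$ gives, throughout $\mathcal I$, the identity
\[
(x \to y)' \approx ((0' \to x) \to y'')',
\]
since the left side is $(x\to y)'$ and the subterm $(y\to 0)'$ on the right is just $y''$. Specializing this further by putting $y := 0$ and invoking (I$_0$), so that $0'' = 0$ and the right-hand factor collapses, yields
\[
x'' \approx (0' \to x)''.
\]
These two consequences of (I) are the only global facts I expect to need repeatedly.

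The implication (2) $\Rightarrow$ (1) then falls out immediately: assuming $x'' \approx x$, the second displayed identity reads $x \approx (0' \to x)''$, and applying the hypothesis once more to the right-hand side (which has the form $w''$) gives $x \approx 0' \to x$, which is condition (1). For the remaining directions I would route everything through condition (2), proving a convenient chain such as (1) $\Rightarrow$ (4), (4) $\Rightarrow$ (3), and (3) $\Rightarrow$ (2), feeding the current hypothesis into a suitable instance of (I) at each step. Concretely, to treat condition (4), $x' \to x \approx x$, and condition (3), $(x \to x')' \approx x$, I would expand $x' \to x = (x \to 0)\to x$ and $(x\to x')\to 0$ by (I), then simplify the resulting primed subterms using the two workhorse identities above and the known $\mathcal I$-identity $x''' \to y \approx x' \to y$ recorded in the Introduction. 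The role of each hypothesis is to license replacing an expression of the form $0' \to t$, $t'$, or $t''$ by its intended normal form, after which the target identity should emerge by a short computation.

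The step I expect to be the genuine obstacle is producing the full involution law $x'' \approx x$ (condition (2)) from one of the \emph{one-sided} hypotheses (1), (3), or (4), since none of those manifestly creates the outer double prime. The difficulty is structural: the identity $x'' \approx (0' \to x)''$ only equates \emph{double-primed} terms, so closing the loop requires coaxing the hypothesis into stripping a prime at exactly the right place rather than merely permuting double negations. I anticipate that the decisive point is the correct choice of substitution in (I), together with careful bookkeeping of which occurrences of $0'$ can be absorbed via condition (1); this is where I would spend most of the effort, and where I would use Prover9/Mace4 to locate the right instantiations and to rule out spurious candidate implications.
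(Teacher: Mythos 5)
Your two preliminary identities are correct consequences of (I) and (I$_0$): putting $z:=0$ in (I) indeed gives $(x\to y)' \approx ((0'\to x)\to y'')'$, the further specialization $y:=0$ gives $x'' \approx (0'\to x)''$, and your derivation of (2) $\Rightarrow$ (1) from the latter is sound. But that is the \emph{only} implication you actually establish. The directions (1) $\Rightarrow$ (4), (4) $\Rightarrow$ (3) and (3) $\Rightarrow$ (2) are described purely in aspirational terms (``the target identity should emerge by a short computation''), and you yourself concede that the decisive step --- recovering the involution $x'' \approx x$ from any one of the one-sided hypotheses (1), (3), (4) --- is an obstacle you have not overcome. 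Your diagnosis of why is accurate: both workhorse identities only equate \emph{double-primed} terms, so they can shuffle primes around but never strip one off, which is exactly what any route into condition (2) requires. Announcing that you would locate the right instantiations with Prover9/Mace4 is a research plan, not a proof. Since the entire content of the lemma is the equivalence of the four conditions, and three of the four implications in your cycle are missing precisely at the point you flag as the hard one, the proposal has a genuine gap.

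For the record, the paper itself contains no proof to compare against: the lemma is imported verbatim as Theorem 8.15 of \cite{sankappanavarMorgan2012}. That the original proof lives at the end of a long development of auxiliary identities for $\mathcal{I}$-zroupoids (of the kind this paper also quotes, e.g.\ $x''' \to y \approx x' \to y$) is consistent with the difficulty you encountered: the passage from a one-sided law such as $0' \to x \approx x$ back to $x'' \approx x$ is the substantive part of the theorem and is not reachable from the two instances of (I) you isolate. To complete your argument along the lines you sketch, you would first need to derive further consequences of (I) --- in particular laws governing triple primes and terms of the form $0 \to x$ --- before the cycle (1) $\Rightarrow$ (4) $\Rightarrow$ (3) $\Rightarrow$ (2) can be closed.
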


Recall that $\mathcal{I}_{2,0}$ and $\mathcal{MC}$ are the subvarieties defined, respectivaly, relative to $\mathcal I$ by the equations
\begin{equation} \label{eq_I20} \tag{$I_{2,0}$}
x'' \approx x.
\end{equation}
\begin{equation} \label{eq_MC} \tag{MC}
x \wedge y \approx y \wedge x.
\end{equation}

\begin{Lemma}{\bf \cite{sankappanavarMorgan2012}} \label{general_properties}
	Let $\mathbf A \in  \mathcal{I}_{2,0}$. Then  
	\begin{enumerate}
		\item $x' \to 0' \approx 0 \to x$, \label{cuasiConmutativeOfImplic2}
		\item $0 \to x' \approx x \to 0'$. \label{cuasiConmutativeOfImplic}
	\end{enumerate}
\end{Lemma}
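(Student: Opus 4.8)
The plan is to derive part (2) from part (1) and to obtain part (1) from the main identity (I) after isolating a single auxiliary fact. First I would dispose of (2): substituting $x \mapsto x'$ in (1) and applying $x'' \approx x$ turns $x' \to 0' \approx 0 \to x$ into $x'' \to 0' \approx 0 \to x'$, i.e. $x \to 0' \approx 0 \to x'$, which is exactly (2). So it suffices to prove (1). For (1), the natural first move is to instantiate (I), namely $(x \to y) \to z \approx ((z' \to x) \to (y \to z)')'$, at $y := 0$ and $z := 0'$. Using $0'' \approx 0$ (from (I$_0$)) to simplify $z' = 0'' = 0$, the left side becomes $x' \to 0'$ and the right side becomes $((0 \to x) \to (0 \to 0')')'$, so that
\[
x' \to 0' \approx \bigl((0 \to x) \to (0 \to 0')'\bigr)'.
\]
Everything then reduces to a single special fact: if one can show $0 \to 0' \approx 0'$, then $(0 \to 0')' \approx 0'' \approx 0$, whence the inner term collapses to $(0 \to x) \to 0 = (0 \to x)'$, and one more application of involution gives $x' \to 0' \approx ((0 \to x)')' \approx 0 \to x$, which is (1).

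Thus the crux is the lemma $0 \to 0' \approx 0'$ in $\mathcal{I}_{2,0}$. Here the key observation is that simply substituting special values into (I) tends to be circular, merely reproducing $(x \to x')' \approx x$ (Lemma~\ref{general_properties_equiv}(3)); the element $0 \to 0'$ is not pinned down by one substitution. The trick I would use is to first prove the stronger auxiliary identity that $0 \to 0'$ behaves like $0'$ as a left factor. Concretely, setting $x := 0$ in (I) and using $z'' \approx z$ yields
\[
(0 \to y) \to z \approx \bigl(z \to (y \to z)'\bigr)'.
\]
Now specializing $y := 0'$ and invoking $0' \to z \approx z$ (Lemma~\ref{general_properties_equiv}(1)) gives $(0' \to z)' \approx z'$, so the right side is $(z \to z')'$; applying $(z \to z')' \approx z$ (Lemma~\ref{general_properties_equiv}(3)) collapses this to
\[
(0 \to 0') \to z \approx z.
\]
Finally, setting $z := 0$ produces $(0 \to 0')' \approx 0$, and one application of involution gives $0 \to 0' \approx 0'$, as desired.

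Assembling these pieces completes the proof: the auxiliary identity delivers $0 \to 0' \approx 0'$, which feeds into the displayed reduction to establish (1), and (2) follows from (1) by the opening substitution $x \mapsto x'$. I expect the only genuine obstacle to be the discovery of the intermediate identity $(0 \to 0') \to z \approx z$; once that is in hand the rest is a routine chase using $0'' \approx 0$, $x'' \approx x$, and the equivalent conditions recorded in Lemma~\ref{general_properties_equiv}. It is worth emphasizing that involution ($x'' \approx x$) is used repeatedly and essentially, both to simplify $z'$ to $z$ inside (I) and to pass between $(0 \to 0')' \approx 0$ and $0 \to 0' \approx 0'$, which is why the statement is formulated over $\mathcal{I}_{2,0}$ rather than over all of $\mathcal{I}$.
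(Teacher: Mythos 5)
Your proof is correct, and every step checks out: the reduction of (2) to (1) via $x \mapsto x'$ and involution is sound; the instantiation of (I) at $y := 0$, $z := 0'$ gives exactly $x' \to 0' \approx ((0 \to x) \to (0 \to 0')')'$; and your auxiliary identity $(0 \to 0') \to z \approx z$ (obtained from (I) at $x := 0$, then $y := 0'$, using conditions (1) and (3) of Lemma \ref{general_properties_equiv}, both available since $x'' \approx x$ holds in $\mathcal{I}_{2,0}$) correctly yields $0 \to 0' \approx 0'$ upon setting $z := 0$ and applying involution. Note, however, that there is no proof in this paper to compare against: Lemma \ref{general_properties} is imported by citation from \cite{sankappanavarMorgan2012}, where the identities are established as part of the general theory of implication zroupoids. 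Your argument is therefore a useful self-contained alternative: it needs only (I), (I$_0$), the involution law, and the previously stated equivalence Lemma \ref{general_properties_equiv}, so it makes the present paper's preliminaries independent of the external reference for this particular lemma. The one structural observation worth making is that your identity $(0 \to 0') \to z \approx z$ is doing the same work that the constant $0'$ does as a left unit (Lemma \ref{general_properties_equiv}(\ref{TXX})); indeed it shows $0 \to 0'$ is itself a left unit, and pinning down such ``unit-like'' elements is exactly the kind of bookkeeping the source paper carries out in its Section 8.
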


\begin{Lemma} \label{general_properties3}  \label{general_properties2}
	Let $\mathbf A \in \mathcal{I}_{2,0}$. Then $\mathbf A$ satisfies:
	\begin{enumerate}
		\item $(x \to 0') \to y \approx (x \to y') \to y$, \label{281014_05}
		\item $((0 \to x) \to y) \to x \approx y \to x$, \label{291014_08}
		\item $(x \to (y \to x)')' \approx (x \to y) \to x$, \label{291014_09}
		\item $(y \to x) \to y \approx (0 \to x) \to y$, \label{291014_10}
		\item $(0 \to x) \to (x \to y) \approx x \to (x \to y)$, \label{311014_05}
		\item $(0 \to x) \to (0 \to y) \approx x \to (0 \to y)$, \label{311014_06}
		\item $x \to y \approx x \to (x \to y)$, \label{031114_04} 
		\item $0 \to (0 \to x)' \approx 0 \to x'$, \label{031114_07}
		\item $0 \to (x \to y) \approx x \to (0 \to y)$, \label{071114_04}
		\item $0 \to (x \to y')' \approx 0 \to (x' \to y)$, \label{191114_05}
		\item $x \to (y \to x') \approx y \to x'$, \label{281114_01}
		\item $(x \to y) \to (y \to x) \approx y \to x$,  \label{250315_05}
		
\item $(x \to y) \to (y \to z) \approx (0 \to x') \to (y \to z)$, \label{250615_04} 
\item $(x \to y)' \to y \approx x \to y$, \label{250615_01}  
\item $(x \to y) \to ((0 \to y) \to z) \approx (x \to y) \to z$,  \label{260615_07} 
\item $(x \to y) \to ((z \to y) \to (u \to z)) \approx  (x \to y) \to (u \to z).$ \label{070715_06}
	\end{enumerate}
\end{Lemma}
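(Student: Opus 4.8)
The plan is to establish all sixteen identities by pure equational calculation, using as raw material only the axiom (I), the involution law $x'' \approx x$ of $\mathcal{I}_{2,0}$, the already-proved Lemma~\ref{general_properties_equiv} (which supplies the interchangeable simplifiers $0' \to x \approx x$, $x' \to x \approx x$, and the reflexivity form $(x \to x')' \approx x$), and Lemma~\ref{general_properties} (the two quasi-commutativity laws $x' \to 0' \approx 0 \to x$ and $0 \to x' \approx x \to 0'$). The recurring mechanism is always the same: substitute carefully chosen terms into (I) so that its right-hand side displays a doubly-primed subterm, collapse that subterm with $x'' \approx x$, remove any factor of the shape $z' \to z$ or $0' \to z$ via Lemma~\ref{general_properties_equiv}, and then match the outcome against an identity proved earlier in the list. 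Because of this last step, the items must be proved in a carefully chosen order rather than the order displayed.

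First I would isolate the ``master'' instances of (I) that drive everything else. Setting $x := 0$ in (I) and simplifying $z' \to 0 = z'' = z$ produces
\[
(0 \to y) \to z \approx (z \to (y \to z)')',
\]
which I will call $(\ast)$. With $(\ast)$ available, items (3) and (4) are immediate: putting $z := x$ in (I) and reducing $x' \to x$ to $x$ gives $(x \to y) \to x \approx (x \to (y \to x)')'$, which is (3); putting $z := y$ in (I), reducing $y' \to y$ to $y$, and then rewriting the right-hand side by $(\ast)$ gives (4). Item (1) is likewise foundational: applying (I) to each side, the instance with middle argument $0'$ simplifies through $0' \to y \approx y$ and the instance with middle argument $y'$ through $y' \to y \approx y$, and both collapse to the common term $((y' \to x) \to y')'$. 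These items, each resting on a single use of (I) plus involution, form the base.

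Next I would climb to the zero-shuffling and absorption identities. The identities that transport a leading $0$ across the operation, namely (8), (9) and (10), I expect to obtain from the quasi-commutativity laws of Lemma~\ref{general_properties} together with $(\ast)$. The idempotent-flavoured laws (5), (6), (7) and the absorption laws (2), (11), (12) are more entangled with one another: for example, applying (I) to $(0 \to x) \to (x \to y)$, collapsing the resulting double prime with $x'' \approx x$, and using the reflexivity form $(w \to w')' \approx w$ already brings (5) and (7) into contact, so a genuine part of the labour here is to separate these into an acyclic order. Item (7), $x \to y \approx x \to (x \to y)$, is pivotal once proved, since it shortens the nested subterms that recur throughout the remaining arguments. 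Finally, the length-four identities (13)-(16) are the heaviest; each should reduce, by one application of (I) to expose a double prime followed by several rewrites using the already-established items (notably (7) and (14)), to an instance of a simpler identity already on the list.

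The hard part will not be any individual manipulation but the global bookkeeping. For each of the sixteen targets one must find the substitution into (I) that yields a collapsible double prime, and one must arrange the sixteen items into a provably acyclic dependency order so that every appeal to an ``earlier'' item is legitimate; a misjudged order can silently turn an otherwise-valid chain into a circular argument. There is no canonical recipe for guessing either the substitutions or the order, which is exactly where the authors' use of Prover~9/Mace~4 pays off, both to search for the rewriting chains and to certify that the intended ordering hides no loop.
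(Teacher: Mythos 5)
Your worked computations are correct as far as they go: the instance $(0\to y)\to z \approx (z\to (y\to z)')'$ of (I), and the derivations of items (\ref{281014_05}), (\ref{291014_09}) and (\ref{291014_10}) from (I), involution and Lemma \ref{general_properties_equiv}, are valid and are essentially the standard arguments. The genuine gap is everything else. For the remaining thirteen identities you give no derivations at all, only statements of intent (``I expect to obtain'', ``should reduce''), and these are exactly the items whose proofs are long and non-obvious: the paper does not reprove them either, but cites four earlier papers (\cite{cornejo2015implication}, \cite{cornejo2016order}, \cite{cornejo2016semisimple}, \cite{cornejo2016derived}) for items (\ref{281014_05})--(\ref{260615_07}), and proves in-text only item (\ref{070715_06}) --- by one application of (I) followed by items (\ref{250615_01}), (\ref{291014_09}), (\ref{291014_10}) and (\ref{260615_07}). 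Your one gesture toward the middle of the list illustrates the problem: applying (I) to $(0\to x)\to(x\to y)$ and collapsing the double prime yields $((x\to y)\to(x\to(x\to y))')'$, which shows at best that item (\ref{031114_04}) implies item (\ref{311014_05}); it does not derive either one, and item (\ref{031114_04}) --- which you yourself call pivotal --- is never established anywhere in your text.

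Your closing paragraph concedes the point: you say the substitutions into (I) and an acyclic ordering of the sixteen items have ``no canonical recipe'' and would have to be found by Prover9-style search. A proof whose essential content is ``search until the rewriting chains appear'' is a proof plan, not a proof. To close the gap you would need explicit chains for items (\ref{291014_08}) and (\ref{311014_05})--(\ref{260615_07}) --- the published ones pass through auxiliary identities (for instance facts about $0\to x$ developed in \cite{cornejo2016order}) that are not among your declared raw materials --- after which your sketch for the length-four item (\ref{070715_06}) could indeed be instantiated exactly as the paper does it.
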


\begin{proof}	
For the proofs of items (\ref{281014_05}), (\ref{291014_09}), (\ref{291014_10}), (\ref{071114_04}), (\ref{191114_05}), (\ref{281114_01}) we refer the reader to \cite{cornejo2015implication}, and
for the proofs of items (\ref{291014_08}), (\ref{311014_06}), (\ref{031114_04}),   (\ref{031114_07}) to \cite{cornejo2016order}.  Items  (\ref{311014_05}), (\ref{250315_05}) are proved in \cite{cornejo2016semisimple}.
For the proofs of items (\ref{250615_04}), (\ref{250615_01}) we refer the reader to \cite{cornejo2016derived}.
Finally, for the proof of (\ref{260615_07}), the reader is referred to the proof of the equation (3.4) in the proof of Lemma 3.1 of \cite{cornejo2016derived}.  \\
	
\noindent Proof of (\ref{070715_06}):
Let $a,b,c,d \in A$. Hence,
\noindent $(a \to b) \to ((c \to b) \to (d \to c)) $
$\overset{  (I) 
}{=}  (a \to b) \to (((d \to c)' \to c) \to (b \to (d \to c))')' $
$\overset{  (\ref{250615_01}) 
}{=}  (a \to b) \to ((d \to c) \to (b \to (d \to c))')' $
$\overset{  (\ref{291014_09}) 
}{=}  (a \to b) \to (((d \to c) \to b) \to (d \to c)) $
$\overset{  (\ref{291014_10}) 
}{=}  (a \to b) \to ((0 \to b) \to (d \to c)) $
$\overset{  (\ref{260615_07})
}{=}  (a \to b) \to (d \to c) $,

completing the proof.
\end{proof}

\begin{Lemma} \label{Lemma_0_C_equal_0}
	Let $\mathbf A \in \mathcal{I}_{2,0}$ such that $\mathbf A \models 0 \approx 0'$, then $\mathbf A \models 0 \to x \approx x$.
\end{Lemma}

\begin{proof}
	Let $a \in A$. Then  $a = 0' \to a = (0 \to 0) \to a = (0' \to 0) \to a \overset{\ref{general_properties_equiv} (\ref{LeftImplicationwithtilde})}{=} 0 \to a$.	
\end{proof}

\begin{Lemma} {\bf \cite{cornejo2016BolMoufang}}  \label{lemma_070616_01}
	Let $\mathbf A \in \mathcal{I}_{2,0}$ such that $\mathbf A \models 0 \to x \approx x$, then $\mathbf A \models (x \to y)' \approx x' \to y'$.
\end{Lemma}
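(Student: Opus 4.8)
The plan is to reduce the right-hand side $x' \to y'$ directly by means of the defining identity (I). The crucial observation is that, after unwinding the definition $x' := x \to 0$, the term $x' \to y'$ equals $(x \to 0) \to y'$, which is already an instance of the left-hand side of (I) under the substitution $x \mapsto x$, $y \mapsto 0$, $z \mapsto y'$. Applying (I) therefore rewrites $x' \to y'$ as $\bigl((y'' \to x) \to (0 \to y')'\bigr)'$, where the inner occurrence $y''$ arises as $z' = (y')'$ in the pattern of (I).

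Next I would simplify the two inner subterms using the two available hypotheses. Involutivity $x'' \approx x$, which holds since $\mathbf A \in \mathcal{I}_{2,0}$, gives $y'' = y$ in the left factor. For the right factor, the assumption $0 \to x \approx x$ yields $0 \to y' = y'$, whence $(0 \to y')' = (y')' = y'' = y$ by involutivity once more. After these substitutions the whole expression collapses to $\bigl((y \to x) \to y\bigr)'$.

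Finally I would invoke Lemma~\ref{general_properties2}(\ref{291014_10}), namely $(y \to x) \to y \approx (0 \to x) \to y$; combined again with the hypothesis $0 \to x \approx x$, this gives $(y \to x) \to y = x \to y$. Hence $x' \to y' = \bigl((y \to x) \to y\bigr)' = (x \to y)'$, which is the desired identity.

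I expect no serious obstacle here, since the entire argument is a short chain of rewrites. The only non-mechanical step is the very first one, namely recognizing that $x' \to y'$ is secretly of the form $(a \to b) \to c$ so that (I) applies without any preprocessing; once that shape is exposed, involutivity and the hypothesis do all the remaining work. Notably, the proof uses none of the auxiliary facts collected in Lemma~\ref{general_properties}, relying only on (I), membership in $\mathcal{I}_{2,0}$, the hypothesis $0 \to x \approx x$, and Lemma~\ref{general_properties2}(\ref{291014_10}).
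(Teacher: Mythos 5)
Your proof is correct. Note that this paper does not actually contain a proof of the lemma: it is imported verbatim from \cite{cornejo2016BolMoufang}, so there is no in-paper argument to compare yours against, and your derivation stands as a self-contained justification. Checking the chain: $x' \to y' = (x \to 0) \to y'$, and (I) under the substitution $x \mapsto x$, $y \mapsto 0$, $z \mapsto y'$ rewrites this as $((y'' \to x) \to (0 \to y')')'$; involutivity (the defining identity of $\mathcal{I}_{2,0}$) gives $y'' = y$, the hypothesis gives $0 \to y' = y'$, and involutivity again gives $(y')' = y$, so the term collapses to $((y \to x) \to y)'$; finally Lemma \ref{general_properties2}(\ref{291014_10}), which requires only membership in $\mathcal{I}_{2,0}$, together with one more use of the hypothesis, yields $((y \to x) \to y)' = ((0 \to x) \to y)' = (x \to y)'$. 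Every step is licensed by the stated hypotheses, and the argument is pleasantly economical: a single application of (I), involutivity, the hypothesis, and one previously established identity.
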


Recall that the variety $\mathcal{S} = \mathcal{I}_{2,0} \cap \mathcal{MC}$,  
which was investigated in \cite{cornejo2015implication,cornejo2016BolMoufang}.  
Throughout this paper, $\mathbf{A} \in \mathcal{S}$.

\begin{Lemma} {\bf \cite{cornejo2016BolMoufang}} \label{properties_of_I20_MC}
	$\mathbf A$ satisfies:
	\begin{enumerate}
		\item $x \to (y \to z) \approx y \to (x \to z)$,  \label{310516_01}
		\item $x' \to y \approx y' \to x$.  \label{310516_02}
	\end{enumerate}
\end{Lemma}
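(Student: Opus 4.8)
The plan is to treat the two identities separately: (\ref{310516_02}) is immediate from meet-commutativity together with involution, while (\ref{310516_01}) is the substantive claim and will require the identity (I) together with the toolbox of Lemma \ref{general_properties3}.

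First I would dispose of (\ref{310516_02}). By the definition of $\land$, the identity (MC) reads $(x \to y')' \approx (y \to x')'$. Applying ${}'$ to both sides and using $I_{2,0}$ (i.e. $w'' \approx w$) gives
\[
x \to y' \approx y \to x'. \quad (\ast)
\]
Now substituting $x \mapsto x'$ and $y \mapsto y'$ in $(\ast)$ and again invoking $I_{2,0}$ turns the left side into $x' \to y$ and the right side into $y' \to x$, which is exactly (\ref{310516_02}).

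For (\ref{310516_01}) the natural first move is to rewrite the left outer factor using $0' \to x \approx x$, which holds in $\mathcal{S}$ by Lemma \ref{general_properties_equiv} since $\mathbf A \in \mathcal I_{2,0}$. Then $x \to (y \to z) = (0' \to x) \to (y \to z)$ is of the form $(p \to q) \to r$, so (I) applies; after simplifying the resulting subterm $(y \to z)' \to 0'$ by Lemma \ref{general_properties}(\ref{cuasiConmutativeOfImplic2}) one reaches
\[
x \to (y \to z) \approx \bigl( (0 \to (y \to z)) \to (x \to (y \to z))' \bigr)'.
\]
Reading the right side through $\land$ (using $a \to b' \approx (a \land b)'$ and $w'' \approx w$), this says $x \to (y \to z) \approx (0 \to (y \to z)) \land (x \to (y \to z))$, and symmetrically $y \to (x \to z) \approx (0 \to (x \to z)) \land (y \to (x \to z))$; moreover by Lemma \ref{general_properties3}(\ref{071114_04}) the leading factors rewrite as $0 \to (y \to z) \approx y \to (0 \to z)$ and $0 \to (x \to z) \approx x \to (0 \to z)$. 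Thus both sides of (\ref{310516_01}) are presented as meets of a known element with themselves.

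The main obstacle is precisely the final symmetrization: commutativity of $\land$ alone does not close the argument, because $\land$ need not be associative on $\mathcal{S}$, so I would have to extract the exchange $x \to (y \to z) \approx y \to (x \to z)$ from (I) by a dedicated chain of the identities of Lemma \ref{general_properties3}, in the same spirit as the proof of (\ref{070715_06}) displayed above (iterating (I), (\ref{291014_09}), (\ref{291014_10}) and the absorption laws (\ref{260615_07})). Equivalently, I would use that $\to$ is term-definable from the commutative operation $\land$ via $x \to z \approx (x \land z')'$, so that (\ref{310516_01}) is \emph{equivalent} to the left-permutability $x \land (y \land u) \approx y \land (x \land u)$ of $\land$; this is the identity I expect to cost the most effort, and it is the step at which I would lean on Prover 9/Mace 4 to pin down the exact sequence of rewrites.
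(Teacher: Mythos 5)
Your proof of item (\ref{310516_02}) is correct and complete: applying $'$ to the meet-commutativity identity $(x \to y')' \approx (y \to x')'$, cancelling double negations via $x'' \approx x$, and then substituting $x \mapsto x'$, $y \mapsto y'$ does yield $x' \to y \approx y' \to x$. Note that there is no in-paper proof to compare against here: the lemma is imported from \cite{cornejo2016BolMoufang}, so both items must stand entirely on their own in your write-up.

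Item (\ref{310516_01}), however, is not proved, and this is where the whole weight of the lemma lies. The computations you do perform are valid: $x \to (y \to z) \approx ((0 \to (y \to z)) \to (x \to (y \to z))')'$ follows correctly from $0' \to x \approx x$, (I) and Lemma \ref{general_properties} (\ref{cuasiConmutativeOfImplic2}), and it can indeed be read as the absorption $x \to (y \to z) \approx (0 \to (y \to z)) \land (x \to (y \to z))$. But, as you yourself concede, this absorption together with commutativity of $\land$ does not yield the exchange law, and at exactly that point the argument stops. None of your three fallbacks is a proof: (a) ``a dedicated chain of the identities of Lemma \ref{general_properties3}, in the spirit of the proof of (\ref{070715_06})'' is precisely the object that has to be exhibited, and no such chain is given; (b) the reformulation as left-permutability $x \land (y \land u) \approx y \land (x \land u)$ is a triviality rather than progress, since under the involution $x \to z \approx (x \land z')'$ the exchange law for $\to$ and left-permutability for $\land$ are the same identity rewritten in a different signature; and (c) announcing that Prover 9/Mace 4 would be used ``to pin down the exact sequence of rewrites'' is a promise of a future derivation, not a derivation. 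Since identity (\ref{310516_01}) is the single most heavily used rewriting tool in the rest of the paper, the step you left open is the entire content of the lemma; the proposal therefore has a genuine gap.
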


\begin{Lemma}   \label{lemma_081116_A}
	Let $\mathbf A \in \mathcal{S}$ such that $\mathbf A \models 0 \to x \approx x \to x$, then 
	\begin{enumerate}
		\item $\mathbf A \models 0 \to (x \to x) \approx x \to x$. \label{231116_03}
		\item $\mathbf A \models 0 \to x' \approx 0 \to x$. \label{081116_06}
	\end{enumerate}
\end{Lemma}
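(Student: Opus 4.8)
The plan is to prove both parts by short equational chains that feed the hypothesis $0 \to x \approx x \to x$ into the identities already established in Lemma \ref{general_properties3} and Lemma \ref{properties_of_I20_MC}. Throughout I use that $\mathbf A \in \mathcal S \subseteq \mathcal{I}_{2,0}$, so the involution $x'' \approx x$ is available.

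For part (\ref{231116_03}), I would begin with $0 \to (x \to x)$ and push the $0$ inside. Applying item (\ref{071114_04}) of Lemma \ref{general_properties3}, namely $0 \to (x \to y) \approx x \to (0 \to y)$, at $y := x$ gives $0 \to (x \to x) \approx x \to (0 \to x)$. The hypothesis rewrites the inner $0 \to x$ as $x \to x$, producing $x \to (x \to x)$, and finally item (\ref{031114_04}) of Lemma \ref{general_properties3} ($x \to y \approx x \to (x \to y)$, at $y := x$) collapses this to $x \to x$. This settles (\ref{231116_03}) in three rewriting steps.

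For part (\ref{081116_06}), the key idea is to reduce everything to the single auxiliary fact that $x' \to x' \approx x \to x$. This follows from the symmetric identity (\ref{310516_02}) of Lemma \ref{properties_of_I20_MC}, namely $x' \to y \approx y' \to x$: taking $y := x'$ yields $x' \to x' \approx x'' \to x$, and since $\mathbf A$ is involutive the right-hand side is just $x \to x$. I would then invoke the hypothesis twice, once at $x := x'$ to get $0 \to x' \approx x' \to x'$, and once (as stated) to get $0 \to x \approx x \to x$. Chaining these with the auxiliary fact gives $0 \to x' \approx x' \to x' \approx x \to x \approx 0 \to x$, which is exactly (\ref{081116_06}).

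The only genuinely subtle point, and really the crux of part (\ref{081116_06}), is recognizing that the hypothesis should be instantiated at $x'$ rather than at $x$, together with the observation that the symmetric law (\ref{310516_02}) and involutivity are precisely what convert $x' \to x'$ back into $x \to x$. Everything else is routine substitution into cited identities; I do not expect any step to require a computation beyond the lemmas already available, nor any new auxiliary result.
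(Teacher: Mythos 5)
Your proof is correct and takes essentially the same approach as the paper: your part (\ref{081116_06}) is literally the paper's chain $0 \to a = a \to a = a' \to a' = 0 \to a'$ read in reverse, using the same instantiation of Lemma \ref{properties_of_I20_MC} (\ref{310516_02}) and the hypothesis at $x'$. For part (\ref{231116_03}) you route through $0 \to (x \to x) \approx x \to (0 \to x)$ (Lemma \ref{general_properties3} (\ref{071114_04})) and $x \to (x \to y) \approx x \to y$, whereas the paper simply instantiates the hypothesis at $x \to x$ and collapses $(x \to x) \to (x \to x)$ via Lemma \ref{general_properties3} (\ref{250315_05}); both are equally routine three-step rewritings from the same toolbox.
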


\begin{proof}
Let $a \in A$. 
	\begin{enumerate}
		\item Observe that
%
%
\noindent $		0 \to (a \to a) $
$\overset{ hyp 
}{=}  (a \to a) \to (a \to a) $
$\overset{   \ref{general_properties2} (\ref{250315_05}) 
}{=}  a \to a $.
		\item 
\noindent $	0 \to a	$
$\overset{ hyp 
}{=}  a \to a $
$\overset{   \ref{properties_of_I20_MC} (\ref{310516_02}) 
}{=}  a' \to a' $
$\overset{ hyp 
}{=}  0 \to a' $.
	\end{enumerate}
This proves the lemma.
\end{proof}

\begin{Lemma}   \label{lemma_071116_A}
	Let $\mathbf A \in \mathcal{S}$ such that $\mathbf A \models 0 \to (x \to x) \approx x \to x$, then $\mathbf A$ satisfies the following identities:
	\begin{enumerate}
		\item $(x \to x) \to y' \approx ((x \to x) \to y)'$, \label{081116_02}
		\item $(x \to x) \to (y \to z) \approx ((x \to x) \to y) \to z$,\label{081116_03}
		\item $(x \to y) \to (x \to y) \approx (x \to x) \to (y \to y).$ \label{081116_04}
	\end{enumerate}
\end{Lemma}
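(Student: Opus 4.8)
The plan is to fix the abbreviation (purely for this sketch) $e:=x\to x$ and to record at the outset the three properties that single $e$ out: it is idempotent, $e\to e\approx e$, which is Lemma~\ref{general_properties2}(\ref{250315_05}) with $y:=x$; it satisfies the hypothesis $0\to e\approx e$; and it satisfies the $\mathcal I_{2,0}$-identity $0'\to e\approx e$ coming from Lemma~\ref{general_properties_equiv}(\ref{TXX}). The ambient tools are (I), left-commutativity $x\to(y\to z)\approx y\to(x\to z)$ and the swap $x'\to y\approx y'\to x$ from Lemma~\ref{properties_of_I20_MC}, together with the ``$0$-transport'' identities of Lemma~\ref{general_properties2}, namely (\ref{071114_04}), (\ref{191114_05}), (\ref{031114_07}), (\ref{291014_10}), (\ref{291014_08}) and (\ref{260615_07}). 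One orienting remark governs the whole proof: expanding a primed product $(u\to v)\to 0$ by (I) only reproduces the trivial absorption $0'\to u\approx u$, which is valid throughout $\mathcal I_{2,0}$ and never sees the hypothesis; hence none of these identities can be a bare (I)-expansion, and the hypothesis $0\to(x\to x)\approx x\to x$ must be \emph{manufactured} as a subterm $0\to e$ and then collapsed.

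For identity~(\ref{081116_02}), $e\to y'\approx(e\to y)'$, I would first dispatch the easy side: by left-commutativity (Lemma~\ref{properties_of_I20_MC}(\ref{310516_01})), $e\to y'\approx e\to(y\to 0)\approx y\to(e\to 0)\approx y\to e'$, so it remains to show $(e\to y)'\approx y\to e'$. Here I would insert the hypothesis by rewriting $e$ as $0\to e$, apply (I) to $(0\to e)\to y$, cancel the resulting double prime by involution and simplify $y'\to 0\approx y$, thereby exposing a factor of the form $0\to e$; the hypothesis collapses it, and a final use of the swap (Lemma~\ref{properties_of_I20_MC}(\ref{310516_02})) together with (\ref{250615_01}) matches the outcome with $y\to e'$. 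The subtlety to watch is that both candidate expressions for $(e\to y)'$ are fixed points of $t\mapsto y\to t$, so exhibiting the fixed-point relation is not enough; the hypothesis is exactly what forces the two fixed points to coincide.

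Identity~(\ref{081116_03}), $e\to(y\to z)\approx(e\to y)\to z$, is the heart of the lemma, and I would prove it \emph{using} (\ref{081116_02}). Applying (I) to the right-hand side gives $(e\to y)\to z\approx((z'\to e)\to(y\to z)')'$, so after taking primes and invoking (\ref{081116_02}) with its variable instantiated to $y\to z$ (which turns $(e\to(y\to z))'$ into $e\to(y\to z)'$), the whole statement reduces to the single absorption $(z'\to e)\to w\approx e\to w$ with $w:=(y\to z)'$. This absorption is where the genuine difficulty lies: using the hypothesis one writes $z'\to e\approx z'\to(0\to e)\approx 0\to(z'\to e)$ by left-commutativity, and the stray prefix $0\to(\,\cdot\,)$ must then be eliminated in front of the idempotent $e$. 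I expect this to be the main obstacle, and I would attack it with the prefix-absorption identities (\ref{291014_10}) and (\ref{291014_08}) --- which are precisely the tools that trade a $0\to$ head against the outer argument --- reinforced by idempotency $e\to e\approx e$ and, if needed, (\ref{260615_07}); the point is to maneuver the $0$ so that it lands in front of a $w\to w$-type subterm where the hypothesis applies.

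Finally, identity~(\ref{081116_04}), $(x\to y)\to(x\to y)\approx(x\to x)\to(y\to y)$, I would read off from (\ref{081116_03}). Taking $z:=y$ in (\ref{081116_03}) gives $(x\to x)\to(y\to y)\approx((x\to x)\to y)\to y$, so it suffices to identify $((x\to x)\to y)\to y$ with $(x\to y)\to(x\to y)$. I would expand $((x\to x)\to y)\to y$ by (I) and simplify the resulting primed factors with (\ref{081116_02}), the swap, and (\ref{250615_01}), then recognize the answer via the idempotency-type identity (\ref{250315_05}). This last step is the most computational but, once (\ref{081116_03}) is available, entirely routine; no new use of the hypothesis beyond what is already packaged in (\ref{081116_02}) and (\ref{081116_03}) should be required.
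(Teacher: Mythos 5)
Your proposal is a plan rather than a proof: in each of the three items the step that carries the actual content is left undone, and in two places the sketch's description of what a computation would yield is inaccurate. For item (\ref{081116_02}), the easy half $(x\to x)\to y'\approx y\to(x\to x)'$ is fine, but the proposed route to $((x\to x)\to y)'\approx y\to(x\to x)'$ does not do what you say: expanding $(0\to(x\to x))\to y$ by (I) and simplifying $y'\to 0\approx y$ gives exactly $(y\to((x\to x)\to y)')'$ --- no ``factor of the form $0\to(x\to x)$'' is exposed anywhere --- so all you obtain is the fixed-point relation $((x\to x)\to y)'\approx y\to((x\to x)\to y)'$, which you yourself correctly observe cannot separate $((x\to x)\to y)'$ from $y\to(x\to x)'$, since both are fixed points of $t\mapsto y\to t$ by Lemma \ref{general_properties3} (\ref{031114_04}). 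The assertion that ``the hypothesis forces the two fixed points to coincide'' is precisely the statement to be proved, and no mechanism for it is given. For item (\ref{081116_03}) your reduction is sound: granting (\ref{081116_02}), the identity is equivalent to the absorption $(z'\to(x\to x))\to(y\to z)'\approx(x\to x)\to(y\to z)'$. But this absorption, which you flag as ``where the genuine difficulty lies,'' is then only attacked with a list of candidate lemmas; none of Lemma \ref{general_properties3} (\ref{291014_10}), (\ref{291014_08}), (\ref{260615_07}) applies as stated, because the prefix here is $z'\to(x\to x)$ with $z$ tied to the outer argument $(y\to z)'$, not a prefix of the form $w\to(x\to x)$ or $0\to(x\to x)$. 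So items (1) and (2) remain unproven. (For what it is worth, the paper does not prove them either: it quotes them from Lemma 3.4 of \cite{cornejo2016BolMoufang}.)

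For item (\ref{081116_04}) there is a more basic problem: your reduction is circular. Putting $z:=y$ in (\ref{081116_03}) gives $(x\to x)\to(y\to y)\approx((x\to x)\to y)\to y$, so ``identifying $((x\to x)\to y)\to y$ with $(x\to y)\to(x\to y)$'' is, modulo (\ref{081116_03}), literally identity (\ref{081116_04}) again; nothing has been reduced, and the step you dismiss as ``entirely routine'' is the whole content. The paper's own proof shows how much is involved: it first derives $(x\to x)\to(0\to x')\approx x\to x$ (equation (\ref{071116_05})), then runs a computation of roughly twenty steps with (I), the swap and left-commutativity to reach the twisted identity $(x\to y)\to(x\to y)\approx(y\to y)\to(x\to x)$ (equation (\ref{071116_06})), and finally symmetrizes using Lemma \ref{properties_of_I20_MC} (\ref{310516_02}). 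One point in your favor: your meta-claim that no use of the hypothesis is needed beyond what is packaged in (\ref{081116_02}) and (\ref{081116_03}) is correct --- the paper's auxiliary (\ref{071116_05}) is itself derivable from (\ref{081116_03}) together with the swap and Lemma \ref{general_properties3} (\ref{031114_04}) --- but that observation does not substitute for the missing computation.
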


\begin{proof} Items (\ref{081116_02}) and (\ref{081116_03}) follows from \cite[Lemma 3.4]{cornejo2016BolMoufang}. Let us prove (\ref{081116_04}). Let $a,b \in A$.
Observe that


\noindent $(a \to a) \to (0 \to a') $
$\overset{\ref{properties_of_I20_MC} (\ref{310516_01})}{=} 0 \to ((a \to a) \to a')$
$\overset{\ref{general_properties2} (\ref{281014_05})}{=} 0 \to ((a \to 0') \to a')$
$\overset{\ref{general_properties} (\ref{cuasiConmutativeOfImplic})}{=} 0 \to ((0 \to a') \to a')$
$\overset{\ref{properties_of_I20_MC} (\ref{310516_01})}{=} (0 \to a') \to (0 \to a')$
$\overset{\ref{general_properties2} (\ref{071114_04}) and (\ref{311014_06})}{=} 0 \to (a' \to a')$
$\overset{hyp}{=} a' \to a'$
$\overset{\ref{properties_of_I20_MC} (\ref{310516_02})}{=} a \to a$.
%
%
%
%
Hence, 
\begin{equation} \label{071116_05}
\mathbf A \models (x \to x) \to (0 \to x') \approx x \to x.
\end{equation}
Since

\noindent $(a \to b) \to (a \to b) $
$\overset{   \ref{properties_of_I20_MC} (\ref{310516_01}) 
}{=}  a \to ((a \to b) \to b) $
$\overset{   \ref{properties_of_I20_MC} (\ref{310516_02}) 
}{=}  a \to (b' \to (a \to b)') $
$\overset{   \ref{properties_of_I20_MC} (\ref{310516_01}) 
}{=}  b' \to (a \to (a \to b)') $
$\overset{   \ref{properties_of_I20_MC} (\ref{310516_02}) 
}{=}  b' \to ((a \to b) \to a') $
$\overset{  (I) 
}{=}  b' \to ((a \to a) \to (b \to a')')' $
$\overset{   \ref{properties_of_I20_MC} (\ref{310516_02}) 
}{=}  b' \to ((a \to a) \to (a \to b')')' $
$\overset{   (\ref{081116_02}) 
}{=}  b' \to ((a \to a) \to (a \to b'))'' $
$\overset{  x'' \approx x 
}{=}  b' \to ((a \to a) \to (a \to b')) $
$\overset{   \ref{properties_of_I20_MC} (\ref{310516_01}) 
}{=}  (a \to a) \to (b' \to (a \to b')) $
$\overset{   (\ref{081116_03}) 
}{=}  ((a \to a) \to b') \to (a \to b') $
$\overset{   \ref{properties_of_I20_MC} (\ref{310516_02}) 
}{=}  ((a \to a) \to b') \to (b \to a') $
$\overset{   \ref{properties_of_I20_MC} (\ref{310516_01}) 
}{=}  b \to (((a \to a) \to b') \to a') $
$\overset{   (\ref{081116_03}) 
}{=}  b \to ((a \to a) \to (b' \to a')) $
$\overset{   \ref{properties_of_I20_MC} (\ref{310516_02}) 
}{=}  b \to ((a \to a) \to (a \to b)) $
$\overset{   \ref{properties_of_I20_MC} (\ref{310516_01}) 
}{=}  (a \to a) \to (b \to (a \to b)) $
$\overset{   \ref{properties_of_I20_MC} (\ref{310516_01}) 
}{=}  (a \to a) \to (a \to (b \to b)) $
$\overset{   \ref{properties_of_I20_MC} (\ref{310516_02}) 
}{=}  (a \to a) \to ((b \to b)' \to a') $
$\overset{   (\ref{081116_03}) 
}{=}  (a \to a) \to ((b \to b) \to (0 \to a')) $
$\overset{   \ref{properties_of_I20_MC} (\ref{310516_01}) 
}{=}  (b \to b) \to ((a \to a) \to (0 \to a')) $
$\overset{  (\ref{071116_05}) 
}{=}  (b \to b) \to (a \to a) $,
we have
\begin{equation} \label{071116_06}
\mathbf A \models (x \to y) \to (x \to y) \approx (y \to y) \to (x \to x).
\end{equation}
Consequently,
\noindent $a \to ((a \to b) \to b) $
$\overset{   \ref{properties_of_I20_MC} (\ref{310516_01}) 
}{=}  (a \to b) \to (a \to b) $
$\overset{   \ref{properties_of_I20_MC} (\ref{310516_02}) 
}{=}  (b' \to a') \to (b' \to a') $
$\overset{  (\ref{071116_06}) 
}{=}  (a' \to a') \to (b' \to b') $
$\overset{   \ref{properties_of_I20_MC} (\ref{310516_02}). 
}{=}  (a \to a) \to (b \to b). $	
\end{proof}

\begin{Lemma} {\bf \cite{cornejo2016BolMoufang}} \label{310516_09}
	Let $\mathbf A \models x \to x \approx x$. Then $\mathbf A \models x' \approx x$. 
\end{Lemma}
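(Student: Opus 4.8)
The plan is to derive $x' \approx x$ directly from the quasi-commutativity identity $x' \to y \approx y' \to x$ of Lemma~\ref{properties_of_I20_MC}(\ref{310516_02}), which is available since $\mathbf{A} \in \mathcal{S}$, together with involutivity and the idempotence hypothesis.

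First I would substitute $y := x'$ into $x' \to y \approx y' \to x$. This yields $x' \to x' \approx (x')' \to x$, that is, $x' \to x' \approx x'' \to x$. Because $\mathbf{A} \in \mathcal{I}_{2,0}$ satisfies $x'' \approx x$, the right-hand side simplifies to $x \to x$, so at this stage $x' \to x' \approx x \to x$.

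Next I would invoke the hypothesis $x \to x \approx x$ on each side. Applied with the variable $x$ itself it collapses the right-hand side to $x$; applied with $x$ replaced by $x'$ it gives $x' \to x' \approx x'$, collapsing the left-hand side to $x'$. Chaining these equalities produces $x' \approx x$, as desired.

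Since every step is a single substitution into a previously established identity, there is no serious obstacle; the only point requiring care is recognizing that setting $y := x'$ in Lemma~\ref{properties_of_I20_MC}(\ref{310516_02}) turns one side into $x'' \to x$, after which involutivity and idempotence finish the argument. A longer alternative would instead first observe that $0' = 0 \to 0 = 0$ under the idempotence hypothesis, then apply Lemma~\ref{Lemma_0_C_equal_0} to obtain $0 \to x \approx x$ and Lemma~\ref{lemma_070616_01} to get the De Morgan law $(x \to y)' \approx x' \to y'$, but the single substitution above is far more direct and I would present that route.
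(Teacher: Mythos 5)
Your proof is correct. There is nothing in the paper itself to compare it against step by step: the lemma is imported from \cite{cornejo2016BolMoufang} and stated without proof, so any valid derivation from material available earlier in the paper is acceptable, and yours is exactly that. The chain $x' \approx x' \to x' \approx x'' \to x \approx x \to x \approx x$ --- justified respectively by the hypothesis instantiated at $x'$, Lemma~\ref{properties_of_I20_MC}(\ref{310516_02}) with $y := x'$, the involution law $x'' \approx x$ (available since $\mathcal{S} \subseteq \mathcal{I}_{2,0}$), and the hypothesis at $x$ --- is a legitimate sequence of substitution instances; Lemma~\ref{properties_of_I20_MC} applies because the paper's standing convention is $\mathbf{A} \in \mathcal{S}$, which is also the intended reading of the lemma's hypothesis. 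One caveat on your closing remark: the ``longer alternative'' you mention does not actually close as sketched, since from $0' \approx 0$, $0 \to x \approx x$ and $(x \to y)' \approx x' \to y'$ the natural computation $x' \approx (x \to x)' \approx x' \to x'$ merely returns $x'$ again, so some further input (essentially Lemma~\ref{properties_of_I20_MC}(\ref{310516_02}) once more) would still be needed; since you present the direct substitution as your proof, this does not affect its correctness.
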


\begin{Lemma}  \label{031116_01_Assoc1}
	Let $\mathbf A \models 0 \to x \approx x$. Then $\mathbf A \models x \to (x \to x) \approx (x \to x) \to x$. 
\end{Lemma}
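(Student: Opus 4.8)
The plan is to show that both sides of the claimed identity independently collapse to the common term $x \to x$, after which the result is immediate.

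First I would handle the right-hand side. The relevant ingredient is Lemma~\ref{general_properties2}(\ref{291014_08}), which states $((0 \to x) \to y) \to x \approx y \to x$ in $\mathcal{I}_{2,0}$. Substituting the hypothesis $0 \to x \approx x$ into the left-hand side of that law turns it into the simpler identity $(x \to y) \to x \approx y \to x$. Specializing $y := x$ then yields $(x \to x) \to x \approx x \to x$. This is the only place where the hypothesis $0 \to x \approx x$ genuinely enters.

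Next I would treat the left-hand side, which needs only membership in $\mathcal{I}_{2,0}$ (the hypothesis is not required here). Lemma~\ref{general_properties2}(\ref{031114_04}) asserts $x \to y \approx x \to (x \to y)$; taking $y := x$ gives at once $x \to (x \to x) \approx x \to x$. Combining the two computations, both $x \to (x \to x)$ and $(x \to x) \to x$ are equal to $x \to x$, and hence to each other, which is exactly the asserted identity.

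There is essentially no obstacle in this argument: everything reduces to two single substitutions into previously established laws. The one point worth care is bookkeeping about the hypothesis. One should note that $0 \to x \approx x$ is a genuine additional assumption and not something already forced in $\mathcal{S}$: within $\mathcal{I}_{2,0}$ the equivalence Lemma~\ref{general_properties_equiv} only provides the weaker left-unit law $0' \to x \approx x$, so it is precisely the stronger hypothesis that is needed to collapse the absorption-type identity~(\ref{291014_08}) into the form $(x \to y) \to x \approx y \to x$ used above.
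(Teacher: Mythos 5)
Your proof is correct. Both you and the paper prove the identity by collapsing each side to the common term $x \to x$, and both use Lemma \ref{general_properties2} (\ref{031114_04}) to handle the side $x \to (x \to x)$; the difference lies entirely in how the other side, $(x \to x) \to x \approx x \to x$, is obtained. The paper first derives the distribution law $(x \to y)' \approx x' \to y'$ from the hypothesis via Lemma \ref{lemma_070616_01}, and then runs a chain through primed terms using Lemma \ref{properties_of_I20_MC} (\ref{310516_02}): $(a \to a) \to a = a' \to (a \to a)' = a' \to (a' \to a') = a' \to a' = a \to a$. You instead substitute the hypothesis $0 \to x \approx x$ directly into the absorption-type law of Lemma \ref{general_properties2} (\ref{291014_08}) to obtain $(x \to y) \to x \approx y \to x$ and then specialize $y := x$. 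Your route is shorter, avoids all manipulation of primes, and in fact mirrors what the paper itself does later in the proof of Lemma \ref{Old_Lemma_10.3(2)}, where $(a \to a) \to a$ is collapsed to $a \to a$ using Lemma \ref{general_properties2} (\ref{291014_10}) together with the same hypothesis. Your closing bookkeeping remark is also accurate: in $\mathcal{S}$ one only gets $0' \to x \approx x$ from Lemma \ref{general_properties_equiv} (\ref{TXX}), and $0 \to x \approx x$ genuinely fails in some symmetric algebras (e.g.\ in $\mathbf{2_b}$, where $0 \to 0 = 1$), so the hypothesis is an essential ingredient and you invoke it at exactly one point, just as the paper does.
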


\begin{proof}
	 By Lemma \ref{lemma_070616_01},
\begin{equation} \label{031116_02}
\mathbf A \models (x \to y)' \approx x' \to y'.
\end{equation}		
Let $a \in A$.	Then 
\noindent $(a \to a) \to a $
$\overset{   \ref{properties_of_I20_MC} (\ref{310516_02}) 
}{=}  a' \to (a \to a)' $
$\overset{  (\ref{031116_02}) 
}{=}  a' \to (a' \to a') $
$\overset{   \ref{general_properties2} (\ref{031114_04}) 
}{=}  a' \to a' $
$\overset{   \ref{properties_of_I20_MC} (\ref{310516_02}) 
}{=}  a \to a $
$\overset{   \ref{general_properties2} (\ref{031114_04})
}{=}  a \to (a \to a) $.
\end{proof}

Let $E$ be the set consisting of the terms:

$t_1(x,y,z,t) = ((x \to y) \to z) \to t$,

$t_2(x,y,z,t) = z \to ((y \to x) \to t)$,

$t_3(x,y,z,t) = (y \to x) \to (z \to t)$,

$t_4(x,y,z,t) = ((z \to y) \to x) \to t$ and 

$t_5(x,y,z,t) = (y \to z) \to (x \to t)$.

\begin{Lemma} \label{lemma_5terms}
	If $\mathbf A \models 0 \to x \approx x$, then $\mathbf A \models e_1 \approx e_2$ where $e_1, e_2 \in E$.
\end{Lemma}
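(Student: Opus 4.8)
=== BEGIN PROOF PROPOSAL ===

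The plan is to show that all five terms $t_1,\dots,t_5$ are equal in $\mathbf A$ under the hypothesis $0 \to x \approx x$ by reducing each $t_i$ to a single canonical normal form. Since the hypothesis $0 \to x \approx x$ gives us, via Lemma~\ref{general_properties_equiv}, that $x'' \approx x$ holds (so $\mathbf A \in \mathcal S$ already guarantees involutivity, but the hypothesis adds the strong simplification $0 \to x \approx x$), and via Lemma~\ref{lemma_070616_01} the crucial fact that the prime operation distributes over $\to$, namely $(x \to y)' \approx x' \to y'$, the operation $\to$ should behave much more rigidly than in a general member of $\mathcal S$. The first thing I would do is record the two commutativity-flavored identities that are available throughout: the left-commutative law $x \to (y \to z) \approx y \to (x \to z)$ from Lemma~\ref{properties_of_I20_MC}(\ref{310516_01}) and the contrapositive law $x' \to y \approx y' \to x$ from Lemma~\ref{properties_of_I20_MC}(\ref{310516_02}). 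These two, together with involutivity $x'' \approx x$ and the distribution of $'$ over $\to$, form the main toolkit.

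Next, I would establish the key structural claim that under $0 \to x \approx x$ the operation $\to$ satisfies a full associativity-like collapse on the relevant four-variable terms. Concretely, I expect that each $t_i(x,y,z,t)$ can be rewritten, using only the four tools above, into the form $(x \to y \to z) \to t$ with some fixed but symmetric arrangement of the three ``antecedent'' variables $x,y,z$. The point of Lemma~\ref{lemma_070616_01} is that once $'$ distributes over $\to$, the law $x' \to y \approx y' \to x$ upgrades to an honest commutativity statement $x \to y \approx y' \to x'$ (replace $x$ by $x'$ and use $x'' \approx x$), and combining this with left-commutativity lets one freely permute antecedents and shuttle variables across the main implication. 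I would first verify $t_1 \approx t_4$ (these differ only by swapping $x$ and $z$ inside the innermost bracket, so this is immediate from the permutation of antecedents), then verify $t_3 \approx t_5$ (again an antecedent swap of $x \leftrightarrow z$ after identifying $y \to x$ with a commuted form), and finally bridge the ``left-nested'' family $\{t_1,t_4\}$, the ``right-shifted'' family $\{t_2\}$, and the ``balanced'' family $\{t_3,t_5\}$ by pushing everything through the left-commutative law and the full commutativity derived above.

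The main obstacle I anticipate is the bridge between the associativity patterns: $t_1 = ((x \to y) \to z) \to t$ is left-nested, $t_2 = z \to ((y \to x) \to t)$ has its parenthesization shifted to the right and a variable promoted to the outermost antecedent position, and $t_3 = (y \to x) \to (z \to t)$ is balanced. Moving between these shapes requires a genuine associativity step, which is not one of the cited identities directly; instead it must be manufactured from the interplay of $(I)$, the distribution of $'$, and the two commutativity laws. I would handle this the way Lemma~\ref{lemma_071116_A}(\ref{081116_04}) handles its long chain: introduce a prime on one side to expose an antecedent, apply $x' \to y \approx y' \to x$ to migrate it past the main connective, use $(x \to y)' \approx x' \to y'$ to redistribute, and remove the prime via involutivity. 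The delicate part is choosing the right sequence so that the three antecedents $x,y,z$ end up in matching order across all five terms; I expect to need to prove an auxiliary identity of the shape $u \to (v \to (w \to t)) \approx \big((u \to v) \to w\big) \to t$ valid under $0 \to x \approx x$, after which all five terms collapse to this common form by repeatedly applying left-commutativity to the nested antecedents and the derived commutativity to reorder them. Establishing that single associativity bridge is where essentially all the work lies; the remaining reductions are then bookkeeping.

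=== END PROOF PROPOSAL ===
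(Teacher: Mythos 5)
Your toolkit is the right one (left-commutativity, Lemma \ref{properties_of_I20_MC} (\ref{310516_01}); contraposition, Lemma \ref{properties_of_I20_MC} (\ref{310516_02}); involution; and the distribution law $(x \to y)' \approx x' \to y'$ from Lemma \ref{lemma_070616_01}), and the within-family reductions you describe ($t_1 \approx t_4$ by priming the antecedent, $t_2 \approx t_3$ by left-commutativity) do go through with these tools. But the centerpiece of your plan --- the auxiliary ``associativity bridge'' $u \to (v \to (w \to t)) \approx ((u \to v) \to w) \to t$, to which you propose to reduce all five terms --- is \emph{false} under the hypothesis, so the plan fails exactly where you yourself located ``essentially all the work.'' Relative to $\mathcal S$, the identity $0 \to x \approx x$ defines the variety $43\mathcal A23$ (Lemmas \ref{lemma_111116_01} and \ref{lemma_111116_02}), and the four-element algebra displayed in the proof of Theorem \ref{theo_5_dist_var} (the one witnessing $43\mathcal A23 \not\subseteq 43\mathcal A12$; its table has $0 \to x = x$ in the first row) lies in $43\mathcal A23$. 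In that algebra take $u = v = w = 1$ and $t = 0$: then $1 \to (1 \to (1 \to 0)) = 1 \to (1 \to 2) = 2$, while $((1 \to 1) \to 1) \to 0 = (3 \to 1) \to 0 = 3 \to 0 = 3$. So there is no fully right-nested canonical form: the five terms $t_1, \dots, t_5$ are all equal to one another (that is the lemma), but they are \emph{not} equal to $u \to (v \to (w \to t))$, and left-commutativity can only permute antecedents, never break apart the compound antecedent $(y \to x)$ occurring in $t_2$, $t_3$, $t_5$.

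The correct bridge is strictly weaker than associativity and carries a telltale reversal of the inner implication: it is $t_1 \approx t_3$, i.e.\ $((x \to y) \to z) \to t \approx (y \to x) \to (z \to t)$, with $x \to y$ on the left becoming $y \to x$ on the right. The mechanism is not an associativity step at all but a contraposition-mediated one: prime the whole term via Lemma \ref{properties_of_I20_MC} (\ref{310516_02}), push the prime through with $(x \to y)' \approx x' \to y'$ (this is where the hypothesis enters, through Lemma \ref{lemma_070616_01}), reorder the resulting right-nested antecedents by Lemma \ref{properties_of_I20_MC} (\ref{310516_01}), and cancel primes by involution. The paper's chain for $((a \to b) \to c) \to d$ proceeds through $c \to \big(d \to (b \to a)\big)'$ to $c \to ((b \to a) \to d) = t_2$ and then $(b \to a) \to (c \to d) = t_3$, never producing a right-nested term in the variables themselves; a second, similar chain gives $t_2 \approx t_5$. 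If you replace your auxiliary identity by this reversed-inner-implication bridge --- proved by the same priming device you already used correctly for $t_1 \approx t_4$, applied now to the whole term rather than only to its antecedent --- your outline becomes essentially the paper's proof.
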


\begin{proof}
	Since $\mathbf A \models 0 \to x \approx x$, by Lemma \ref{lemma_070616_01}, 
\begin{equation} \label{031116_03}
\mathbf A \models (x \to y)' \approx x' \to y'.
\end{equation}
Let $a,b,c,d \in A$. Then

\noindent $((a \to b) \to c) \to d $
$\overset{   \ref{properties_of_I20_MC} (\ref{310516_02}) 
}{=}  d' \to ((a \to b) \to c)' $
$\overset{  (\ref{031116_03}) 
}{=}  (d \to ((a \to b) \to c))' $
$\overset{   \ref{properties_of_I20_MC} (\ref{310516_02}) 
}{=}  (d \to (c' \to (a \to b)'))' $
$\overset{   \ref{properties_of_I20_MC} (\ref{310516_01}) 
}{=}  (c' \to (d \to (a \to b)'))' $
$\overset{  (\ref{031116_03}) 
}{=}  c'' \to (d \to (a \to b)')' $
$\overset{  x \approx x'' 
}{=}  c \to (d \to (a \to b)')' $
$\overset{   \ref{properties_of_I20_MC} (\ref{310516_02}) 
}{=}  c \to (d \to (b' \to a')')' $
$\overset{  (\ref{031116_03}) 
}{=}  c \to (d \to (b \to a))' $
$\overset{  (\ref{031116_03}) 
}{=}  c \to (d' \to (b \to a)') $
$\overset{   \ref{properties_of_I20_MC} (\ref{310516_02}) 
}{=}  c \to ((b \to a) \to d)  $
$\overset{   \ref{properties_of_I20_MC} (\ref{310516_01})  
}{=}  (b \to a) \to (c \to d) $,
proving   $t_1 \approx t_2$ and $t_1 \approx t_3$.

Next,
\noindent $((a \to b) \to c) \to d $
$\overset{   \ref{properties_of_I20_MC} (\ref{310516_02}) 
}{=}  (c' \to (a \to b)') \to d $
$\overset{  (\ref{031116_03}) 
}{=}  (c' \to (a' \to b')) \to d  $
$\overset{   \ref{properties_of_I20_MC} (\ref{310516_01}) 
}{=}  (a' \to (c' \to b')) \to d  $
$\overset{   \ref{properties_of_I20_MC} (\ref{310516_02}) 
}{=}  ((c' \to b')' \to a) \to d $
$\overset{  (\ref{031116_03})  
}{=}  ((c \to b) \to a) \to d $, proving  $t_1 \approx t_4$.

Also, we have that
\noindent $c \to ((b \to a) \to d) $
$\overset{   \ref{properties_of_I20_MC} (\ref{310516_02}) 
}{=}  c \to (d' \to (b \to a)') $
$\overset{   \ref{properties_of_I20_MC} (\ref{310516_02}) 
}{=}  c \to (d' \to (a' \to b')') $
$\overset{  (\ref{031116_03}) 
}{=}  c \to (d' \to (a \to b)) $
$\overset{   \ref{properties_of_I20_MC} (\ref{310516_01}) 
}{=}  c \to (a \to (d' \to b)) $
$\overset{   \ref{properties_of_I20_MC} (\ref{310516_01}) 
}{=}  a \to (c \to (d' \to b)) $
$\overset{   \ref{properties_of_I20_MC} (\ref{310516_01}) 
}{=}  a \to (d' \to (c \to b)) $
$\overset{   \ref{properties_of_I20_MC} (\ref{310516_02}) 
}{=}  a \to ((c \to b)' \to d) $
$\overset{   \ref{properties_of_I20_MC} (\ref{310516_02}) 
}{=}  a \to ((b' \to c')' \to d) $
$\overset{  (\ref{031116_03}) 
}{=}  a \to ((b \to c) \to d) $
$\overset{   \ref{properties_of_I20_MC} (\ref{310516_01}), 
}{=}  (b \to c) \to (a \to d) $, proving $t_2 \approx t_5$.
\end{proof}

\begin{Lemma} \label{lemma_091116_01}
	If $\mathbf A \models 0 \to x \approx x$, then $\mathbf A \models x \to ((x \to x) \to y) \approx (x \to (x \to x)) \to y$.
\end{Lemma}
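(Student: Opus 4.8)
The plan is to reduce the identity to the two lemmas that immediately precede it, rather than computing from scratch. First I would match both sides of the claimed identity against the terms in the set $E$. Recalling that $t_2(x,y,z,t) = z \to ((y \to x) \to t)$, the substitution $x \mapsto x$, $y \mapsto x$, $z \mapsto x$, $t \mapsto y$ gives
$$t_2(x,x,x,y) = x \to ((x \to x) \to y),$$
which is exactly the left-hand side of the identity we must prove. Likewise, since $t_1(x,y,z,t) = ((x \to y) \to z) \to t$, the same substitution gives
$$t_1(x,x,x,y) = ((x \to x) \to x) \to y.$$

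Since $\mathbf A \models 0 \to x \approx x$ by hypothesis, Lemma \ref{lemma_5terms} applies and yields $e_1 \approx e_2$ for all $e_1, e_2 \in E$; in particular $t_1 \approx t_2$. Taking the substitution instance recorded above, I obtain
$$x \to ((x \to x) \to y) \approx ((x \to x) \to x) \to y.$$
It then remains only to replace the inner subterm $(x \to x) \to x$ on the right by $x \to (x \to x)$. This is precisely the content of Lemma \ref{031116_01_Assoc1}, which (under the same hypothesis $0 \to x \approx x$) gives $x \to (x \to x) \approx (x \to x) \to x$; substituting this into the right-hand side above produces $(x \to (x \to x)) \to y$, completing the chain.

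The argument is essentially bookkeeping, so I do not expect a genuine obstacle. The only thing to get right is the pattern-matching: one must notice that the left-hand side is the $t_2$-instance (rather than the more obvious left-associated $t_1$-instance) and verify that both lemmas share the common hypothesis $0 \to x \approx x$. Once the correct instances of $t_1$ and $t_2$ are identified, the result follows by transitivity of $\approx$ from Lemmas \ref{lemma_5terms} and \ref{031116_01_Assoc1}.
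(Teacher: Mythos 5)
Your proof is correct and follows essentially the same route as the paper: the paper also recognizes the left-hand side as the instance $t_2(a,a,a,b)$, invokes Lemma \ref{lemma_5terms} to replace it by the left-associated term $((a \to a) \to a) \to b$ (via $t_4$, which under this substitution coincides with your $t_1$ instance), and then finishes with Lemma \ref{031116_01_Assoc1} exactly as you do. The only cosmetic difference is which element of $E$ is named for the left-associated term, which is immaterial since $t_1$ and $t_4$ agree when all of $x,y,z$ are identified.
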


\begin{proof}
	By Lemma \ref{lemma_5terms}, $\mathbf A \models e_1 \approx e_2$ for all $e_1, e_2 \in E$. Consider $a,b \in A$. We have that
\noindent $	a \to ((a \to a) \to b) $
$\overset{  
}{=}  t_1(a,a,a,b) $
$\overset{  
}{=}  t_4(a,a,a,b) $
$\overset{  
}{=}  ((a \to a) \to a) \to b $
$\overset{   \ref{031116_01_Assoc1} 
}{=}  (a \to (a \to a)) \to b $,
proving the lemma.
\end{proof}

\begin{Lemma} \label{lemma_identities_S_onevariable}
	$\mathbf A$ satisfies:
	\begin{enumerate}
		\item $(x \to x) \to (x \to x) \approx x \to (x \to (x \to x))$, \label{021216_01}
		\item $((x \to x) \to x) \to x \approx x \to (x \to (x \to x))$,  \label{021216_02}
		\item $(x \to y) \to (y \to z) \approx ((y \to x) \to y) \to z$, \label{311016_01}
		\item $y \to ((x \to y) \to z) \approx ((y \to x) \to y) \to z$, \label{311016_02}
		\item $(x \to x) \to (x \to y) \approx x \to ((x \to x) \to y)$, \label{311016_04}
		\item $x \to ((x \to x) \to y) \approx ((x \to x) \to x) \to y$, \label{311016_05}
		\item $x \to ((y \to x) \to x) \approx ((x \to y) \to x) \to x$, \label{311016_06}
		\item $x \to ((y \to x) \to y) \approx ((x \to y) \to x) \to y$. \label{311016_07}
	\end{enumerate} 
\end{Lemma}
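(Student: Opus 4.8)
The plan is to observe that the eight identities are highly redundant: the only substantial work is in proving (\ref{311016_01}) and (\ref{311016_02}), after which every remaining identity is either a substitution instance of (\ref{311016_02}) or a one-line consequence of the interchange law \ref{properties_of_I20_MC}(\ref{310516_01}), namely $x \to (y \to z) \approx y \to (x \to z)$. The tools I would use are exactly three: this interchange law, the contraposition law \ref{properties_of_I20_MC}(\ref{310516_02}) $x' \to y \approx y' \to x$, and the identity \ref{general_properties2}(\ref{291014_09}) $(x \to y) \to x \approx (x \to (y \to x)')'$, together with involution $x'' \approx x$.

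The heart of the argument is a single rewriting chain establishing (\ref{311016_01}) and (\ref{311016_02}) at once. Fixing $a,b,c \in A$, I would start from the right-hand side $((b \to a) \to b) \to c$ and rewrite its inner factor by \ref{general_properties2}(\ref{291014_09}) to obtain $(b \to (a \to b)')' \to c$; a contraposition then gives $c' \to (b \to (a \to b)')$, an interchange brings $b$ to the front as $b \to (c' \to (a \to b)')$, and a second contraposition on the inner term $c' \to (a \to b)'$ (cancelling a double prime via involution) produces $(a \to b) \to c$, so that we reach $b \to ((a \to b) \to c)$. This equality of $((b \to a) \to b) \to c$ with $b \to ((a \to b) \to c)$ is exactly (\ref{311016_02}); one further interchange rewrites $b \to ((a \to b) \to c)$ as $(a \to b) \to (b \to c)$, giving (\ref{311016_01}). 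I expect this chain to be the main obstacle: although each individual step is forced, finding the order in which to apply contraposition so that the primes collect at the outermost position and then cancel under $x'' \approx x$ is the only genuinely nonobvious part of the whole lemma.

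With (\ref{311016_01}) and (\ref{311016_02}) available, the rest is bookkeeping. Identity (\ref{311016_04}) is a single application of the interchange law, taking $x \to x$, $x$, $y$ for the three arguments of $x \to (y \to z) \approx y \to (x \to z)$. Identities (\ref{311016_05}), (\ref{311016_06}) and (\ref{311016_07}) are, respectively, the instances of (\ref{311016_02}) obtained by substituting $(x,y,z) := (x,x,y)$, $(x,y,z) := (y,x,x)$ and $(x,y,z) := (y,x,y)$, so they require no further computation.

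Finally, the two one-variable identities (\ref{021216_01}) and (\ref{021216_02}) follow from the collapsing identities already proved: for (\ref{021216_01}) I would use \ref{general_properties2}(\ref{250315_05}) with $x=y$ to reduce $(x \to x) \to (x \to x)$ to $x \to x$, and then apply \ref{general_properties2}(\ref{031114_04}) twice to expand $x \to x$ into $x \to (x \to (x \to x))$; for (\ref{021216_02}) I would specialise (\ref{311016_01}) to $x=y=z$ to rewrite $((x \to x) \to x) \to x$ as $(x \to x) \to (x \to x)$ and then invoke (\ref{021216_01}). This completes the plan.
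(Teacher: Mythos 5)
Your proposal is correct, and its computational core is genuinely different from the paper's. The paper proves (\ref{311016_01}) first, by a longer chain: it uses Lemma \ref{general_properties2}~(\ref{250615_04}) to replace $(x\to y)$ by $(0\to x')$, then unfolds with the axiom (I), rearranges via contraposition and interchange, and refolds with (I) again; identity (\ref{311016_02}) is then obtained from (\ref{311016_01}) by one interchange. You reverse this: you prove (\ref{311016_02}) directly from $((y\to x)\to y)\to z$ in four steps, using Lemma \ref{general_properties2}~(\ref{291014_09}) to rewrite $(y\to x)\to y$ as $(y\to (x\to y)')'$, followed by contraposition, interchange, and a second contraposition with involution; your chain never touches (I) or (\ref{250615_04}) explicitly, since (\ref{291014_09}) packages the needed instance of (I), and it is noticeably shorter. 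Your treatment of the remaining items also streamlines the paper's: you observe that (\ref{311016_05}) and (\ref{311016_07}) are literally substitution instances of (\ref{311016_02}) (the paper derives them from (\ref{311016_01}) plus interchange, noting only (\ref{311016_06}) as a special case), and you derive the one-variable identities last --- (\ref{021216_01}) from Lemma \ref{general_properties2}~(\ref{250315_05}) with $x=y$ plus two applications of (\ref{031114_04}), and (\ref{021216_02}) by specializing (\ref{311016_01}) to $x=y=z$ --- whereas the paper proves them first and independently, via (\ref{291014_10}), (\ref{311014_05}) and (\ref{291014_08}). Both routes are sound; yours buys brevity and a cleaner dependency structure (everything funnels through (\ref{311016_02})), while the paper's keeps the one-variable identities self-contained and closer to the raw axiom.
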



\begin{proof} 
Let $a,b,c \in A$. 
	\begin{enumerate}
		\item  Observe that
	\noindent $		(a \to a) \to (a \to a) $
	$\overset{   \ref{properties_of_I20_MC} (\ref{310516_01}) 
	}{=}  a \to ((a \to a) \to a) $
	$\overset{   \ref{general_properties2} (\ref{291014_10}) 
	}{=}  a \to ((0 \to a) \to a) $
	$\overset{   \ref{properties_of_I20_MC} (\ref{310516_01}) 
	}{=}  (0 \to a) \to (a \to a) $
	$\overset{   \ref{general_properties2} (\ref{311014_05}) 
	}{=}  a \to (a \to a) $
	$\overset{   \ref{general_properties2} (\ref{031114_04}) 
	}{=}  a \to (a \to (a \to a)) $.
		\item 
\noindent $		((a \to a) \to a) \to a $
$\overset{   \ref{general_properties2} (\ref{291014_10}) 
}{=}  ((0 \to a) \to a) \to a $
$\overset{   \ref{general_properties2} (\ref{291014_08}) 
}{=}  a \to a $
$\overset{   \ref{general_properties2} (\ref{031114_04}) 
}{=}  a \to (a \to a) $
$\overset{   \ref{general_properties2} (\ref{031114_04})
}{=}  a \to (a \to (a \to a)) $.	
		
		\item 
\noindent $	(a \to b) \to (b \to c)	$
$\overset{   \ref{general_properties2} (\ref{250615_04}) 
}{=}  (0 \to a') \to (b \to c) $
$\overset{  (I) 
}{=}  (((b \to c)' \to 0) \to (a' \to (b \to c))')' $
$\overset{  x \approx x'' 
}{=}  ((b \to c) \to (a' \to (b \to c))')' $
$\overset{   \ref{properties_of_I20_MC} (\ref{310516_02}) 
}{=}  ((a' \to (b \to c)) \to (b \to c)')' $
$\overset{   \ref{properties_of_I20_MC} (\ref{310516_01}) 
}{=}  ((b \to (a' \to c)) \to (b \to c)')' $
$\overset{   \ref{properties_of_I20_MC} (\ref{310516_02}) 
}{=}  ((b \to (c' \to a)) \to (b \to c)')' $
$\overset{   \ref{properties_of_I20_MC} (\ref{310516_01}) 
}{=}  ((c' \to (b \to a)) \to (b \to c)')' $
$\overset{  (I)
}{=}  ((b \to a) \to b) \to c $.
		
	\item 
	\noindent $	b \to ((a \to b) \to c)	$
	$\overset{   \ref{properties_of_I20_MC} (\ref{310516_01}) 
	}{=}  (a \to b) \to (b \to c) $
	$\overset{  (\ref{311016_01}) 
	}{=}  ((b \to a) \to b) \to c $.
\end{enumerate} 
		
For (\ref{311016_04}) use Lemma \ref{properties_of_I20_MC} (\ref{310516_01}), and
for (\ref{311016_05}) use Lemma \ref{properties_of_I20_MC} (\ref{310516_01}) and item  (\ref{311016_01}).
 (\ref{311016_06}) is a special case of (\ref{311016_02}).  
Finally, one can use Lemma \ref{properties_of_I20_MC} (\ref{310516_01}) and item  (\ref{311016_01}) to prove (\ref{311016_07}).
\end{proof}





\section{Weak associative laws of length 3}
In this section we examine all the weak associative laws of length 3.


\subsection{With one variable:}  

\noindent The only word of length 3 with 1 variable is: 

A: $\langle x, x, x \rangle$. 

\noindent Ways in which the word A can be bracketed (where $a$ is just a place holder) are:

\begin{minipage}{0.5 \textwidth}
\begin{multicols}{2}
1: $a \to (a \to a)$, 

2: $(a \to a) \to a$. 
\end{multicols}
\end{minipage}

\medskip \noindent The only weak associative identitiy in this category is: \medskip  

1: (31A12)  $x \to (x \to x)$ $\approx$ $(x \to x) \to x$.

\subsection{With 2 variables:}  


\noindent Possible words of length 3 with 2 variables are: 

\begin{minipage}{0.8 \textwidth}
\begin{multicols}{3}
A: $\langle x, x, y \rangle$, 

B: $\langle x, y, x\rangle$, 

C: $\langle x, y, y\rangle$. 
\end{multicols}
\end{minipage}

\medskip \noindent Ways in which a word of size 3 can be bracketed:\medskip  

\begin{minipage}{0.8 \textwidth}
\begin{multicols}{2}
1: $a \to (a \to a)$, 

2: $(a \to a) \to a$. 	
\end{multicols}
\end{minipage}

\medskip \noindent 
The weak associative identities in this category are:

\medskip
1: (LALT)  $x \to (x \to y)$ $\approx$ $(x \to x) \to y$ ({\it the left-alternative law})

2: (FLEX)  $x \to (y \to x)$ $\approx$ $(x \to y) \to x$  (the {\it flexible law})

3: (RALT)  $x \to (y \to y)$ $\approx$ $(x \to y) \to y$ ({\it the right-alternative law})\\

We should note that we did not follow our convention in this case, since these identities are well known by the above names. 
We let $\mathcal{LALT}$, $\mathcal{FLEX}$ and $\mathcal{RALT}$ denote, respectively, the subvarieties $\mathcal S$ defined by (LALT), \linebreak
(FLEX)  and (RALT).

\begin{Lemma} \label{Lemma_SL_subsetFLEX_RALT_LALT}
	$\mathcal{SL} \subseteq \mathcal{FLEX} \cap \mathcal{RALT} \cap \mathcal{LALT}$. 
\end{Lemma}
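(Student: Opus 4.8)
The plan is to exploit the fact, recalled just above in the excerpt, that $\mathcal{SL} = \mathcal{V}(\mathbf{2_s})$. By Birkhoff's theorem the class of models of any set of identities is closed under $\mathbf H$, $\mathbf S$ and $\mathbf P$; hence an identity holds throughout $\mathcal{V}(\mathbf{2_s})$ as soon as it holds in the single generator $\mathbf{2_s}$. Thus it suffices to check that $\mathbf{2_s}$ satisfies each of (LALT), (FLEX) and (RALT), which reduces the whole claim to a finite computation on a two-element algebra.

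To carry this out I would first note that the operation $\to$ of $\mathbf{2_s}$ is exactly the binary join (maximum) on the chain $0 < 1$: reading off its table gives $a \to b = a \lor b$. In particular $\to$ is idempotent, commutative and associative on $\mathbf{2_s}$. Each of the three laws then collapses: for (LALT) both $x \to (x \to y)$ and $(x \to x) \to y$ evaluate to $x \lor y$; for (FLEX) both $x \to (y \to x)$ and $(x \to y) \to x$ evaluate to $x \lor y$; and for (RALT) both $x \to (y \to y)$ and $(x \to y) \to y$ evaluate to $x \lor y$. Since the two sides agree for every assignment of $0,1$ to the variables, $\mathbf{2_s}$ satisfies all three identities, and therefore so does every member of $\mathcal{SL} = \mathcal{V}(\mathbf{2_s})$. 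This yields $\mathcal{SL} \subseteq \mathcal{LALT}$, $\mathcal{SL} \subseteq \mathcal{FLEX}$ and $\mathcal{SL} \subseteq \mathcal{RALT}$, hence $\mathcal{SL} \subseteq \mathcal{FLEX} \cap \mathcal{RALT} \cap \mathcal{LALT}$.

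An alternative, more syntactic route avoids invoking the generator: since $\mathcal{SL}$ is term-equivalent to the variety of $\lor$-semilattices with least element $0$, the operation $\to$ is associative, commutative and idempotent directly. Associativity alone already yields (LALT) and (RALT), while (FLEX) needs in addition commutativity (and idempotency) to match $x \lor y \lor x$ with $x \lor y$. I would expect the generator-based argument to be the shorter one to write down, so that would be my primary approach, with this equational route as a sanity check.

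There is essentially no hard step here; the only point requiring care is the justification that verification on $\mathbf{2_s}$ transfers to all of $\mathcal{SL}$, which is exactly the content of $\mathcal{SL} = \mathcal{V}(\mathbf{2_s})$ together with preservation of identities under $\mathbf H \mathbf S \mathbf P$. If one instead prefers the purely equational route, the only mild obstacle is that associativity of $\to$ is not listed among the defining identities of $\mathcal{SL}$ and so must be quoted from the term-equivalence with semilattices.
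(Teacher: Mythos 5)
Your proposal is correct and follows essentially the same route as the paper: the paper's proof likewise observes that $\mathbf{2_s}$ satisfies (FLEX), (RALT) and (LALT) and then concludes via $\mathcal{V}(\mathbf{2_s}) = \mathcal{SL}$. Your explicit identification of $\to$ in $\mathbf{2_s}$ with the join on the chain $0 < 1$ simply fills in the verification the paper labels ``easy.''
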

\begin{proof}
It is easy to verify that $\mathbf{2_s}$ satisfies  (FLEX), (RALT) and (LALT).
	The lemma follows since $\mathbf{V(2_s)} = \mathcal{SL}$.  
\end{proof}

\begin{Theorem} \label{theorem_131216_01}
	$\mathcal{FLEX} = \mathcal{RALT} = \mathcal{LALT} =  \mathcal{SL}$.
	
\end{Theorem}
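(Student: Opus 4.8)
The forward inclusions are already available: Lemma \ref{Lemma_SL_subsetFLEX_RALT_LALT} gives $\mathcal{SL} \subseteq \mathcal{FLEX} \cap \mathcal{RALT} \cap \mathcal{LALT}$, so it remains only to prove the three reverse inclusions $\mathcal{FLEX} \subseteq \mathcal{SL}$, $\mathcal{RALT} \subseteq \mathcal{SL}$ and $\mathcal{LALT} \subseteq \mathcal{SL}$. The plan is to route all three through a single uniform endpoint: I will show that each of the laws (FLEX), (RALT), (LALT), read in $\mathcal{S}$, forces idempotence $x \to x \approx x$. Once idempotence is in hand, Lemma \ref{310516_09} yields $x' \approx x$, so $\mathbf A \in \mathcal{I}_{1,0}$; and since $\mathbf A \in \mathcal{MC}$ (because $\mathbf A \in \mathcal{S}$), Lemma \ref{lemma_SL_I10_C} places $\mathbf A$ in $\mathcal{MC} \cap \mathcal{I}_{1,0} = \mathcal{SL}$. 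Throughout I will use that $'$ is injective on any member of $\mathcal{I}_{2,0}$ (if $a' = b'$ then $a = a'' = b'' = b$), so that an identity of the form $(x \to x)' \approx x'$ at once upgrades to $x \to x \approx x$.

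The (LALT) case is the quickest. In $\mathcal{S}$, Lemma \ref{general_properties2}(\ref{031114_04}) gives $x \to (x \to y) \approx x \to y$, so (LALT) collapses to $x \to y \approx (x \to x) \to y$. Substituting $y := 0$ produces $x' \approx (x \to x)'$, and injectivity of $'$ delivers $x \to x \approx x$.

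For (RALT), I first set $y := 0$ to obtain $x \to 0' \approx x$ (using $x' \to 0 = x'' = x$). Lemma \ref{general_properties}(\ref{cuasiConmutativeOfImplic}) rewrites this as $0 \to x' \approx x$, whence, replacing $x$ by $x'$, one gets $0 \to x \approx x'$. Feeding this into Lemma \ref{general_properties2}(\ref{071114_04}) turns $0 \to (x \to y) \approx x \to (0 \to y)$ into $(x \to y)' \approx x \to y'$; taking $y := x$ and using Lemma \ref{general_properties_equiv}(\ref{reflexivity}) (which gives $x \to x' \approx x'$ after applying $'$) yields $(x \to x)' \approx x'$, and once more injectivity of $'$ gives idempotence.

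The (FLEX) case is the main obstacle, since the law does not reduce to anything involving $0 \to x$ as transparently. The plan is to rewrite (FLEX) via the commutation law Lemma \ref{properties_of_I20_MC}(\ref{310516_01}) into $y \to (x \to x) \approx (x \to y) \to x$, and then specialize $y := 0$ to get $0 \to (x \to x) \approx x$ (here $(x \to 0) \to x = x' \to x \approx x$ by Lemma \ref{general_properties_equiv}(\ref{LeftImplicationwithtilde})). The delicate step is to extract $0' \approx 0$: specializing the last identity at $x := 0$ gives $0 \to 0' \approx 0$, while Lemma \ref{general_properties2}(\ref{031114_07}) at $x := 0$ gives $0' \approx 0 \to 0'$, and comparing the two yields $0' \approx 0$. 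Then Lemma \ref{Lemma_0_C_equal_0} supplies $0 \to x \approx x$, and substituting this back into $0 \to (x \to x) \approx x$ collapses the left side to $x \to x$, giving idempotence. With idempotence established in all three cases, the common endpoint described in the first paragraph completes the argument, and hence $\mathcal{FLEX} = \mathcal{RALT} = \mathcal{LALT} = \mathcal{SL}$.
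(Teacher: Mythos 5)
Your proposal is correct, and its overall skeleton coincides with the paper's: in each case reduce the defining law to idempotence $x \to x \approx x$, invoke Lemma \ref{310516_09} to get $x' \approx x$, conclude $\mathbf A \in \mathcal{SL}$ via Lemma \ref{lemma_SL_I10_C}, and finish with Lemma \ref{Lemma_SL_subsetFLEX_RALT_LALT}. Where you genuinely diverge is in how idempotence is extracted. The paper handles each law by a single direct chain of rewrites built around one clever instantiation: for (LALT) it computes $a = (a \to a')' = ((a\to a) \to 0)' = a \to a$; for (FLEX) it instantiates at $(a, a')$ so that $(a \to a') \to a = a \to (a' \to a) = a \to a$ collapses immediately via Lemma \ref{general_properties_equiv}(\ref{LeftImplicationwithtilde}); for (RALT) it uses Lemma \ref{general_properties2}(\ref{281014_05}) to turn $(a \to 0') \to a$ into $(a \to a') \to a$. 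You instead derive intermediate structural identities: for (RALT) you pin down $0 \to x \approx x'$ and upgrade via Lemma \ref{general_properties2}(\ref{071114_04}) and injectivity of $'$; for (FLEX) you extract $0' \approx 0$, pass through Lemma \ref{Lemma_0_C_equal_0} to get $0 \to x \approx x$, and only then collapse. Your route is longer (especially for FLEX, where the paper needs essentially one application of the law), but it has the virtue of isolating a reusable tactic --- determine how $0$ acts, then substitute back --- which is in fact the same mechanism the paper deploys later for the length-4 cases (e.g., Lemmas \ref{Lemma_0_C_equal_0}, \ref{lema_1_identity} and Theorem \ref{theorem_42X_equalTo_SL}); the paper's proofs of this particular theorem are more economical but more ad hoc. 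One small remark: your injectivity-of-$'$ device ($a' = b'$ implies $a = b$ in $\mathcal{I}_{2,0}$) is sound and slightly streamlines the LALT and RALT endgames compared with the paper's explicit double-negation steps.
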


\begin{proof} Let $\mathbf A \in \mathcal{LALT} \cup \mathcal{FLEX} \cup  \mathcal{RALT}$  and $a \in A$.
	
	Let $\mathbf A \in \mathcal{LALT}$.  Then 
\noindent $a	$
$\overset{  x \approx x'' 
}{=}  a'' $
$\overset{   \ref{general_properties_equiv} (\ref{LeftImplicationwithtilde}) 
}{=}  (a'' \to a')' $
$\overset{  x \approx x'' 
}{=}  (a \to a')' $
$\overset{  
}{=}  (a \to (a \to 0))' $
$\overset{  (LALT) 
}{=}  ((a \to a) \to 0)' $
$\overset{  
}{=}  (a \to a)'' $
$\overset{  x \approx x''
}{=}  a \to a $.

Next, let $\mathbf A \in \mathcal{FLEX}$.  Then 
\noindent $a $
$\overset{   \ref{general_properties_equiv} (\ref{LeftImplicationwithtilde}) 
}{=}  a' \to a $
$\overset{   \ref{general_properties_equiv} (\ref{LeftImplicationwithtilde}) 
}{=}  (a'' \to a') \to a $
$\overset{  x \approx x'' 
}{=}  (a \to a') \to a  $
$\overset{  (FLEX) 
}{=}  a \to (a' \to a) $
$\overset{   \ref{general_properties_equiv} (\ref{LeftImplicationwithtilde})
}{=}  a \to a $.

Finally, let $\mathbf A \in \mathcal{RALT}$.  Then 
\noindent $a \to a $
$\overset{  x \approx x'' 
}{=}  a'' \to a $
$\overset{  
}{=}  ((a \to 0) \to 0) \to a $
$\overset{  (RALT) 
}{=}  (a \to (0 \to 0)) \to a  $
$\overset{  
}{=}  (a \to 0') \to a $
$\overset{   \ref{general_properties2} (\ref{281014_05}) 
}{=}  (a \to a') \to a $
$\overset{  x \approx x'' 
}{=}  (a'' \to a') \to a $
$\overset{   \ref{general_properties_equiv} (\ref{LeftImplicationwithtilde}) 
}{=}  a' \to a $
$\overset{   \ref{general_properties_equiv} (\ref{LeftImplicationwithtilde})
}{=}  a $.
Therefore, $\mathbf A \models x \to x \approx x$. By Lemma \ref{310516_09}, $\mathbf A \models x \approx x'$ and consequently, $\mathbf A \in \mathcal{SL}$.  Hence the result is valid in view of Lemma \ref{Lemma_SL_subsetFLEX_RALT_LALT}.
\end{proof}

\subsection{With 3 variables:}

\medskip


\noindent The only word of length 3 with 3 variables is:
\medskip  

A: $\langle x, y, z\rangle$. 

\medskip \noindent Ways in which a word of length 3 can be bracketed:\medskip  

\begin{minipage}{0.5 \textwidth}
\begin{multicols}{2}
	1: $a \to (a \to a)$, 
	
	2: $(a \to a) \to a$. 	
\end{multicols}	
\end{minipage}
%

\medskip \noindent 
The only weak associative identitiy in this category is: \medskip  

($33\mathcal{A}12$)  $x \to (y \to z)$ $\approx$ $(x \to y) \to z$  (associative law).






\begin{Lemma} \label{Old_Lemma_10.3(3)}
$33\mathcal{A}12 = \mathcal{SL}$.
\end{Lemma}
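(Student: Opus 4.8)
The plan is to prove both inclusions. For $\mathcal{SL} \subseteq 33\mathcal{A}12$, I would first check that $\mathbf{2_s}$ satisfies the associative law $x \to (y \to z) \approx (x \to y) \to z$: its operation is the join of the two-element chain with least element $0$, and join is associative, so a direct inspection of the table settles it. Since $\mathcal{V}(\mathbf{2_s}) = \mathcal{SL}$ and $\mathcal{SL} \subseteq \mathcal{S}$, it follows that $\mathcal{SL}$ is a subvariety of $\mathcal{S}$ satisfying associativity, i.e. $\mathcal{SL} \subseteq 33\mathcal{A}12$.

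The substance is the reverse inclusion $33\mathcal{A}12 \subseteq \mathcal{SL}$. Suppose $\mathbf{A} \in \mathcal{S}$ satisfies associativity; I would aim to show $\mathbf{A} \models x \to x \approx x$, since then Lemma \ref{310516_09} yields $x' \approx x$, placing $\mathbf{A}$ in $\mathcal{I}_{1,0}$, and Lemma \ref{lemma_SL_I10_C} together with meet-commutativity ($\mathbf{A} \in \mathcal{MC}$) gives $\mathbf{A} \in \mathcal{SL}$. Thus everything reduces to extracting idempotency from associativity.

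For idempotency, the key observation is that associativity collapses $x \to x'$ to $(x \to x)'$: indeed $x \to x' = x \to (x \to 0) = (x \to x) \to 0 = (x \to x)'$. Now apply Lemma \ref{general_properties_equiv}(\ref{reflexivity}), valid because $\mathbf{A} \in \mathcal{I}_{2,0}$, which gives $(x \to x')' \approx x$; substituting the identity just derived turns the left-hand side into $((x \to x)')' = (x \to x)''$, and since $\mathbf{A} \models u'' \approx u$ this equals $x \to x$. Hence $x \to x \approx x$, as required.

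I expect the single genuine obstacle to be spotting the short manipulation in the previous paragraph, namely rewriting $x \to x'$ via associativity so that the involutive reflexivity law of Lemma \ref{general_properties_equiv} applies and forces $x \to x$ to equal $x$. Everything after idempotency is a direct appeal to the cited lemmas, and the forward inclusion is routine.
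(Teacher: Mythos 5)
Your proof is correct, and while it shares the paper's overall skeleton---forward inclusion via $\mathbf{2_s}$, and reduction of the reverse inclusion to the idempotent law $x \to x \approx x$, finished off by Lemma \ref{310516_09}---the crucial idempotency step is carried out by a genuinely different computation. The paper first applies associativity to the constants to obtain $0' \approx 0$ (via $0' = 0 \to 0' = 0 \to (0 \to 0) = (0 \to 0) \to 0 = 0' \to 0 = 0$), and then chains $a = a' \to a = (a \to 0) \to a = a \to (0 \to a) = a \to (0' \to a) = a \to a$, using items (\ref{TXX}) and (\ref{LeftImplicationwithtilde}) of Lemma \ref{general_properties_equiv}. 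You instead apply associativity once to rewrite $x \to x' = x \to (x \to 0) = (x \to x) \to 0 = (x \to x)'$, and then hit this with item (\ref{reflexivity}) of the same lemma, $(x \to x')' \approx x$, plus involution, getting $x \to x \approx x$ in one line. Your route is more economical: it bypasses the auxiliary identity $0' \approx 0$ entirely and needs only a single use of the associative law. Both arguments draw on the same package of equivalent identities for $\mathcal{I}_{2,0}$ (Lemma \ref{general_properties_equiv}), so the gain is brevity rather than generality. Two further small differences: you verify the forward inclusion directly by checking that the operation of $\mathbf{2_s}$ is the join of a two-element chain (hence associative), where the paper simply cites \cite{cornejo2015implication}; and your explicit appeal to Lemma \ref{lemma_SL_I10_C} (meet-commutativity together with $x' \approx x$ forces membership in $\mathcal{SL}$) spells out a step that the paper leaves implicit in its closing sentence.
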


\begin{proof}
 By \cite{cornejo2015implication}, $\mathcal{SL} \subseteq 33\mathcal{A}12$. Hence let us consider $\mathbf A \in 33\mathcal{A}12$ and $a \in A$. Observe that $0' = 0 \to 0' = 0 \to (0 \to 0) = (0 \to 0) \to 0 = 0' \to 0 = 0$. Then 
\begin{equation} \label{101116_01}
\mathbf A \models 0' \approx 0.
\end{equation}
Therefore,
\noindent $a $
$\overset{   \ref{general_properties_equiv} (\ref{LeftImplicationwithtilde}) 
}{=}  a' \to a $
$\overset{  
}{=}  (a \to 0) \to a $
$\overset{ (33A12) 
}{=}  a \to (0 \to a) $
$\overset{  (\ref{101116_01}) 
}{=}  a \to (0' \to a) $
$\overset{   \ref{general_properties_equiv} (\ref{TXX}) 
}{=}  a \to a $.
Consequently, $\mathbf A \in \mathcal{SL}$ by Lemma \ref{310516_09}.
\end{proof}

\section{Weak associative laws with length 4 and with 1 variable.}

The only word of length 4 with 1 variable is:

\medskip  
A: $\langle x, x, x, x \rangle$.

\medskip \noindent Ways in which a word of length 4 can be bracketed are:\medskip  

\begin{minipage}{1 \textwidth}
\begin{multicols}{3}
1: $a \to (a \to (a \to a))$, 

2: $a \to ((a \to a) \to a)$, 

3: $(a \to a) \to (a \to a)$, 

4: $(a \to (a \to a)) \to a$, 

5: $((a \to a) \to a) \to a$.
\end{multicols}
\end{minipage}

\medskip

By now, we believe that reader is well acquainted with our notation for identities.  So, 
 we will, no longer, present the list of the identities in this and the remaining categories. 

\begin{Lemma} \label{lemma_131216_01} The following hold:
	\begin{enumerate}
		\item $41\mathcal A12 = 41\mathcal A13$, \label{281016_01}
		\item $41\mathcal A24 = 41\mathcal A34$, \label{281016_02}
		\item $41\mathcal A25 = 41\mathcal A35$, \label{281016_03}
		\item $41\mathcal A23 = \mathcal S$, \label{281016_04}
		\item $41\mathcal A13 = 41\mathcal A15 = 41\mathcal A35 = \mathcal S$,  \label{281016_05}
		\item $41\mathcal A14 = 41\mathcal A34 = 41\mathcal A45$. \label{281016_06}
	\end{enumerate}
\end{Lemma}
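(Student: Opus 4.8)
The plan is to reduce the whole statement to one structural observation: relative to $\mathcal{S}$, four of the five bracketings of the word $\langle x,x,x,x\rangle$ collapse to a single term, so that only bracketing 4, namely $(x \to (x \to x)) \to x$, can possibly behave differently. I would first make this collapse precise and then read off each of the six claimed equalities as an immediate consequence.

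First I would establish, as identities valid throughout $\mathcal{S}$, that the four bracketings 1, 2, 3 and 5 denote one and the same term. Lemma \ref{lemma_identities_S_onevariable}(\ref{021216_01}) is exactly $(x \to x) \to (x \to x) \approx x \to (x \to (x \to x))$, that is, bracketing 3 equals bracketing 1, and Lemma \ref{lemma_identities_S_onevariable}(\ref{021216_02}) is $((x \to x) \to x) \to x \approx x \to (x \to (x \to x))$, that is, bracketing 5 equals bracketing 1. To incorporate bracketing 2, I would specialize Lemma \ref{lemma_identities_S_onevariable}(\ref{311016_05}), namely $x \to ((x \to x) \to y) \approx ((x \to x) \to x) \to y$, at $y := x$, obtaining $x \to ((x \to x) \to x) \approx ((x \to x) \to x) \to x$, which says that bracketing 2 equals bracketing 5. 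Combining these, every $\mathbf A \in \mathcal{S}$ satisfies the identities forcing bracketings 1, 2, 3 and 5 to coincide.

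With this collapse in hand, I would observe that each of the identities (41A12), (41A13), (41A15), (41A23), (41A25) and (41A35) equates two of the coincident bracketings 1, 2, 3, 5, and is therefore a theorem of $\mathcal{S}$; hence each of the corresponding subvarieties $41\mathcal A12$, $41\mathcal A13$, $41\mathcal A15$, $41\mathcal A23$, $41\mathcal A25$, $41\mathcal A35$ equals $\mathcal{S}$. This immediately yields claims (\ref{281016_04}) and (\ref{281016_05}), which assert that certain of these subvarieties equal $\mathcal{S}$, as well as claims (\ref{281016_01}) and (\ref{281016_03}), which assert equalities between pairs of them.

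Finally, for the substantive claims (\ref{281016_02}) and (\ref{281016_06}), which involve bracketing 4, I would exploit the interchangeability of bracketings 1, 2, 3, 5 to show that the relevant defining identities are equivalent modulo the theory of $\mathcal{S}$. Since every $\mathbf A \in \mathcal{S}$ identifies bracketings 2 and 3, in any such algebra the identity (41A24) holds precisely when (41A34) holds, giving $41\mathcal A24 = 41\mathcal A34$. Likewise, because bracketings 1, 3 and 5 coincide in $\mathcal{S}$, each of (41A14), (41A34) and (41A45) is equivalent, relative to $\mathcal{S}$, to the single identity equating bracketing 4 with bracketing 1; hence $41\mathcal A14 = 41\mathcal A34 = 41\mathcal A45$. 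I expect no serious obstacle: the only step that is not pure bookkeeping is the specialization producing the identity between bracketings 2 and 5, and once the four-fold collapse is recorded every remaining assertion reduces to a routine substitution argument.
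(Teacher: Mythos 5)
Your proof is correct and takes essentially the same route as the paper: both arguments rest on Lemma \ref{lemma_identities_S_onevariable} (items (\ref{021216_01}), (\ref{021216_02}), (\ref{311016_05})) to show that bracketings 1, 2, 3, 5 of $\langle x,x,x,x\rangle$ coincide throughout $\mathcal{S}$, after which all six equalities are routine bookkeeping. The only cosmetic difference is that the paper invokes the exchange law, Lemma \ref{properties_of_I20_MC} (\ref{310516_01}), directly for items (\ref{281016_01})--(\ref{281016_04}), whereas you recover the equivalence of bracketings 2 and 3 from your four-fold collapse, which in passing gives the (true, and consistent with Proposition \ref{prop_to_Main_Theo}) stronger fact that $41\mathcal{A}12$, $41\mathcal{A}13$, $41\mathcal{A}25$, $41\mathcal{A}35$ all equal $\mathcal{S}$.
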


\begin{proof}
	
 Items (\ref{281016_01}), (\ref{281016_02}),  (\ref{281016_03}) and (\ref{281016_04})
 follow from 
 Lemma \ref{properties_of_I20_MC} (\ref{310516_01}).

For (\ref{281016_05}), observe that $\mathcal S \subseteq 41\mathcal A13$ by Lemma \ref{lemma_identities_S_onevariable} (\ref{021216_01}),  
$\mathcal S \subseteq 41\mathcal A15$ by  Lemma \ref{lemma_identities_S_onevariable} (\ref{021216_02}),  
and $\mathcal S \subseteq 41\mathcal A35$ by Lemma \ref{lemma_identities_S_onevariable} (\ref{311016_05}) and (\ref{311016_04}).

To prove (\ref{281016_06}), $41 \mathcal A14 \subseteq 41\mathcal A34$ follows from (41A14) and 	Lemma \ref{lemma_identities_S_onevariable} (\ref{021216_01}). From (41A34), Lemma \ref{properties_of_I20_MC} (\ref{310516_01}) and  Lemma \ref{lemma_identities_S_onevariable} (\ref{311016_05}) we have that $41 \mathcal A34 \subseteq 41\mathcal A45$. The inclusion $41 \mathcal A45 \subseteq 41\mathcal A14$ follows from (41A45) and Lemma \ref{lemma_identities_S_onevariable} (\ref{021216_02}).

\end{proof}


\begin{Lemma} \label{Old_Lemma 10.3(1)}
$41\mathcal A14 = 31\mathcal A12$.
\end{Lemma}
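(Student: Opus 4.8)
The plan is to show that, relative to $\mathcal{S}$, the two defining identities (41A14) and (31A12) both collapse to the single identity $x \to x \approx (x \to x) \to x$; the equality of the two varieties then follows immediately, since they are cut out of $\mathcal{S}$ by equivalent identities.

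The key ingredient is that $\mathcal{S} \subseteq \mathcal{I}_{2,0}$, so Lemma \ref{general_properties2}(\ref{031114_04}), namely $x \to y \approx x \to (x \to y)$, is available. Substituting $y := x$ gives
\[
x \to (x \to x) \approx x \to x,
\]
and a second application (with $y := x \to x$) gives $x \to (x \to (x \to x)) \approx x \to (x \to x) \approx x \to x$. Thus, in $\mathcal{S}$, each of the right-nested one-variable terms $x \to (x \to x)$ and $x \to (x \to (x \to x))$ simplifies to $x \to x$.

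With these simplifications in hand, I would rewrite each defining identity in turn. The identity (31A12), $x \to (x \to x) \approx (x \to x) \to x$, becomes $x \to x \approx (x \to x) \to x$ after replacing its left-hand side. For (41A14), $x \to (x \to (x \to x)) \approx (x \to (x \to x)) \to x$, the left-hand side simplifies to $x \to x$, while the subterm $x \to (x \to x)$ on the right simplifies to $x \to x$, so the right-hand side becomes $(x \to x) \to x$; hence (41A14) is also equivalent to $x \to x \approx (x \to x) \to x$. Since both (41A14) and (31A12) are equivalent, modulo the equational theory of $\mathcal{S}$, to the same identity, they define the same subvariety, giving $41\mathcal A14 = 31\mathcal A12$.

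There is essentially no obstacle here: the whole argument is a direct consequence of the absorption-type identity of Lemma \ref{general_properties2}(\ref{031114_04}), and the only point requiring care is to apply it the correct number of times to reduce the length-$4$ nested term (twice for the left-hand side of (41A14), once for the inner subterm on its right-hand side). All manipulations remain within $\mathcal{S}$, so no additional hypotheses are needed.
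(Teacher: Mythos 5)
Your proof is correct and is essentially identical to the paper's own argument: the paper also reduces both (41A14) and (31A12) to the common identity $x \to x \approx (x \to x) \to x$ by repeated use of Lemma \ref{general_properties2}(\ref{031114_04}). You have merely spelled out the applications of that lemma that the paper leaves implicit.
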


\begin{proof}
 It is enough to observe that, 
 in view of Lemma \ref{general_properties2} (\ref{031114_04}), the identities defining these varieties are both equivalent to $x \to x \approx (x \to x) \to x$.
\end{proof}

\section{Weak associative laws with length 4 and with 2 variables.}

\medskip 
 Possible words of length 4 with 2 variables are: 

\medskip 

\begin{minipage}{0.8 \textwidth} 
\begin{multicols}{4}
A: $\langle x, x, x, y\rangle$, 

B: $\langle x, x, y, x\rangle$, 

C: $\langle x, x, y, y\rangle$, 

D: $\langle x, y, x, x\rangle$, 

E: $\langle x, y, x, y \rangle$, 

F: $\langle x, y, y, x\rangle$, 

G: $\langle x, y, y, y\rangle$. 
\end{multicols}
\end{minipage}

\medskip \noindent Ways in which a word of size 4 can be bracketed are: \medskip

\begin{minipage}{0.8 \textwidth}
	\begin{multicols}{2}
1: $a \to (a \to (a \to a))$, 

2: $a \to ((a \to a) \to a)$, 

3: $(a \to a) \to (a \to a)$, 

4: $(a \to (a \to a)) \to a$, 

5: $((a \to a) \to a) \to a$.
\end{multicols}
\end{minipage}
 \\

It is easy to see that there are 70 identities in this category, and accordingly there are 70 subvarieties of $\mathcal S$ defined by them.  As not all of them will be distinct, we will partition these 70 varieties subject to the relation of ``being equal''.  We will choose one representative from each block of the partition.
As will be shown below, some of them will equal $\mathcal{SL}$ and some others will equal $\mathcal{S}$.  
We will use  
$\mathcal{SL}$ and $\mathcal{S}$ as representatives of the blocks whose varieties are equal to $\mathcal{SL}$ and $\mathcal{S}$ respectively. 
\begin{Lemma} \label{including_relations_of_SL_42}
	$\mathcal{SL} \subseteq 42\mathcal Xij$, for $\mathcal X \in \{A,B,C,D,E,F,G\}$ and for all $i,j$ such that $1 \leq i < j \leq 5$.
\end{Lemma}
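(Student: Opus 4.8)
The goal is to show $\mathcal{SL} \subseteq 42\mathcal{X}ij$ for every word $\mathcal{X} \in \{A,B,C,D,E,F,G\}$ of length $4$ in two variables and every pair of bracketings $1 \le i < j \le 5$. Since $\mathcal{SL} = \mathcal{V}(\mathbf{2_s})$, it suffices to verify that the single two-element algebra $\mathbf{2_s}$ satisfies every weak associative law $(42\mathcal{X}ij)$ in this category; containment of the whole variety then follows because satisfaction of an identity is preserved under $\mathbf{V}(-)$.

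Let me sketch the plan. First I would recall that in $\mathbf{2_s}$ the operation $\to$ is the join of the two-element $\lor$-semilattice with least element $0$: explicitly $0 \to 0 = 0$ and all other products equal $1$. In other words, in $\mathbf{2_s}$ the operation $\to$ is commutative, associative, and idempotent. This is the crucial structural fact. Because $\to$ is fully associative on $\mathbf{2_s}$, \emph{any} two bracketings of a given word $X$ produce the same element for every assignment of the variables — the value depends only on the multiset of variable-values appearing in $X$, which is the join of those values. Consequently every weak associative law (which by definition has the same variables in the same order on both sides, differing only in bracketing) holds in $\mathbf{2_s}$.

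More concretely, one can argue that for any word $X = \langle v_1, v_2, v_3, v_4 \rangle$ and any bracketing, the term evaluates in $\mathbf{2_s}$ to $v_1 \lor v_2 \lor v_3 \lor v_4$ (the semilattice join), since $\to$ coincides with $\lor$ there and $\lor$ is associative and commutative. Thus both $t_1$ and $t_2$ in any identity $(42\mathcal{X}ij)$ evaluate to the same join, so $\mathbf{2_s} \models (42\mathcal{X}ij)$ for all seven words $\mathcal{X}$ and all admissible $i,j$. Hence $\mathcal{SL} = \mathbf{V}(\mathbf{2_s}) \subseteq 42\mathcal{X}ij$ for each such variety.

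I anticipate essentially no obstacle here, since the entire content is the observation that $\to$ on $\mathbf{2_s}$ is associative (and commutative and idempotent), which collapses all bracketings. The only mild care needed is to note that the definition of weak associative law guarantees the two sides share the same ordered sequence of variables, so that equality of joins genuinely identifies the two terms; this is exactly what makes the single verification on $\mathbf{2_s}$ cover all $70$ identities uniformly. If one prefers to avoid invoking the associativity of $\to$ on $\mathbf{2_s}$ directly, one can instead observe that $\mathbf{2_s}$ is term-equivalent to the two-element semilattice, in which bracketing is irrelevant, and cite $\mathcal{V}(\mathbf{2_s}) = \mathcal{SL}$ from \cite[Corollary 10.4]{cornejo2015implication}.
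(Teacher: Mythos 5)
Your proof is correct and takes essentially the same route as the paper: the paper likewise reduces the claim to checking that $\mathbf{2_s}$ satisfies each identity $(42\mathrm{X}ij)$ (dismissing this as ``routine computation'') and then invokes $\mathcal{V}(\mathbf{2_s}) = \mathcal{SL}$ from \cite[Corollary 10.4]{cornejo2015implication}. Your additional observation---that $\to$ on $\mathbf{2_s}$ is the semilattice join, so associativity collapses all bracketings of a word and disposes of all $70$ identities uniformly---is just a clean, explicit way of carrying out the verification the paper leaves implicit.
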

\begin{proof}
	By a routine computation, it is easy to check that $\mathbf{2_s} \in  42\mathcal Xij$ for all $1 \leq i < j \leq 5$.
	Then the proof is complete since $\mathcal{V}(\mathbf{2_s}) = \mathcal{SL}$, in view of 
	\cite[Corollary 10.4]{cornejo2015implication}.	
\end{proof}

\begin{Lemma} \label{lemma_081116_02}  We have
	\begin{enumerate}
		\item $42\mathcal A23 = 42\mathcal A25 = 42\mathcal A35 =42\mathcal D25 = 42\mathcal E25 = \mathcal S$, \label{311016_03}
		\item $42\mathcal A12 = 42\mathcal A13 = 42\mathcal A15$ and $42\mathcal A24 = 42\mathcal A34$, \label{231116_06}
		\item $42\mathcal D12 = 42\mathcal D15$ and $42\mathcal E12 = 42\mathcal E15$. \label{231116_07}
	\end{enumerate}
\end{Lemma}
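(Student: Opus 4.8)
The plan is to reduce all three parts of the lemma to the single-variable-type identities already established in Lemma~\ref{lemma_identities_S_onevariable}, together with elementary reasoning about when two bracketings produce terms that are equal throughout $\mathcal S$. First I would record the relevant bracketings explicitly. For the word $A = \langle x,x,x,y\rangle$, bracketing $2$ is $x \to ((x \to x) \to y)$, bracketing $3$ is $(x \to x) \to (x \to y)$, bracketing $4$ is $(x \to (x \to x)) \to y$, and bracketing $5$ is $((x \to x) \to x) \to y$; for $D = \langle x,y,x,x\rangle$, bracketing $2$ is $x \to ((y \to x) \to x)$ and bracketing $5$ is $((x \to y) \to x) \to x$; and for $E = \langle x,y,x,y\rangle$, bracketing $2$ is $x \to ((y \to x) \to y)$ and bracketing $5$ is $((x \to y) \to x) \to y$.

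For Part~(1), the strategy is to recognize each claimed identity as a theorem of $\mathcal S$, so that the corresponding subvariety collapses to $\mathcal S$. The identity $(42A23)$ is exactly Lemma~\ref{lemma_identities_S_onevariable}~(\ref{311016_04}); $(42A25)$ is Lemma~\ref{lemma_identities_S_onevariable}~(\ref{311016_05}); $(42D25)$ is Lemma~\ref{lemma_identities_S_onevariable}~(\ref{311016_06}); and $(42E25)$ is Lemma~\ref{lemma_identities_S_onevariable}~(\ref{311016_07}). The remaining equality $42\mathcal A35 = \mathcal S$ then follows by transitivity: combining (\ref{311016_04}) and (\ref{311016_05}) gives $(x \to x) \to (x \to y) \approx x \to ((x \to x) \to y) \approx ((x \to x) \to x) \to y$ in $\mathcal S$, which is precisely $(42A35)$.

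For Parts~(2) and~(3), the key observation is that Part~(1) shows certain bracketings coincide everywhere in $\mathcal S$. Concretely, bracketings $2$, $3$ and $5$ of $A$ are pairwise equal throughout $\mathcal S$ (by $42\mathcal A23 = 42\mathcal A25 = 42\mathcal A35 = \mathcal S$), so any weak associative identity of the form ``bracketing $1$ $\approx$ bracketing $j$'' for $j \in \{2,3,5\}$ is equivalent, modulo the theorems of $\mathcal S$, to the same identity for any other such $j$; this yields $42\mathcal A12 = 42\mathcal A13 = 42\mathcal A15$. The same reasoning applied to bracketing $4$ against the equal pair $\{2,3\}$ gives $42\mathcal A24 = 42\mathcal A34$. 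For Part~(3), since $42\mathcal D25 = \mathcal S$ makes bracketings $2$ and $5$ of $D$ equal in $\mathcal S$, the identities $(42D12)$ and $(42D15)$ are equivalent relative to $\mathcal S$, so $42\mathcal D12 = 42\mathcal D15$; the identical argument using $42\mathcal E25 = \mathcal S$ gives $42\mathcal E12 = 42\mathcal E15$.

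I do not expect a genuine obstacle here: the substantive computational work was already carried out in establishing Lemma~\ref{lemma_identities_S_onevariable}, so the entire proof is bookkeeping. The only place to be careful is the correct expansion of each bracketing number into its term for the words $A$, $D$, $E$, and the clean phrasing of the ``equal bracketings make the two defining identities interderivable over $\mathcal S$'' principle used throughout Parts~(2) and~(3).
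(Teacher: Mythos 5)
Your proposal is correct and follows essentially the same route as the paper: the paper's own proof of all three parts is precisely a reduction to Lemma~\ref{lemma_identities_S_onevariable}, items~(\ref{311016_04})--(\ref{311016_07}), with parts~(2) and~(3) obtained by the same ``equal bracketings over $\mathcal S$ make the defining identities interderivable'' principle you state. Your version merely makes explicit the bookkeeping (bracketing expansions and the transitivity step for $42\mathcal A35$) that the paper leaves implicit.
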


\begin{proof}
		\begin{enumerate}
			\item From Lemma \ref{lemma_identities_S_onevariable} (\ref{311016_04}) and (\ref{311016_05}) we have that $$42\mathcal A23 = 42\mathcal A25 = 42\mathcal A35 = \mathcal S.$$ Using Lemma \ref{lemma_identities_S_onevariable} (\ref{311016_06}) and (\ref{311016_07}) we can conclude that $$42\mathcal D25 = 42\mathcal E25 = \mathcal S.$$
			\item Follows from Lemma \ref{lemma_identities_S_onevariable} (\ref{311016_04}) and (\ref{311016_05}).
			\item Follows from Lemma \ref{lemma_identities_S_onevariable} (\ref{311016_06}) and (\ref{311016_07}).
		\end{enumerate}
The proof is complete.
\end{proof}

%
%

Let $M$ be the set consisting of the following pairs of varieties:  $(42\mathcal B12,42\mathcal D13)$, $(42\mathcal B13,42\mathcal G12)$, $(42\mathcal C12,42\mathcal E13)$, $(42\mathcal C13,42\mathcal F12)$, $(42\mathcal D12,42\mathcal G13)$, $(42\mathcal D14,42\mathcal G14)$, $(42\mathcal D24,42\mathcal G34)$, $(42\mathcal E12,42\mathcal F13)$, $(42\mathcal E14,42\mathcal F14)$, $(42\mathcal E24,42\mathcal F34)$.

\begin{Lemma} \label{lemma_081116_03}
	If $(\mathcal A, \mathcal B) \in M$ then $\mathcal A = \mathcal B$.
\end{Lemma}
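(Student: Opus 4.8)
The plan is to regard each variety $42\mathcal Xij$ as $\mathcal S\cap\mathrm{Mod}(t_i\approx t_j)$, where $t_i,t_j$ are the two brackettings of the word $X$, so that proving $\mathcal A=\mathcal B$ for a pair $(\mathcal A,\mathcal B)\in M$ amounts to showing that the two defining identities are equivalent relative to $\mathcal S$. I would establish this by reducing \emph{each} defining identity to a normal form using only laws already available in $\mathcal S$, and then checking that, for each pair, the two normal forms coincide (possibly after the harmless renaming $x\leftrightarrow y$ of the universally quantified variables). The working tools are: the left-commutativity law $x\to(y\to z)\approx y\to(x\to z)$ (Lemma \ref{properties_of_I20_MC}(\ref{310516_01})); the contrapositive law $x'\to y\approx y'\to x$ (Lemma \ref{properties_of_I20_MC}(\ref{310516_02})); the involution $x''\approx x$; and the catalogue of Lemma \ref{general_properties2}, especially $x\to y\approx x\to(x\to y)$ (\ref{031114_04}), $(y\to x)\to y\approx(0\to x)\to y$ (\ref{291014_10}), $(0\to x)\to(x\to y)\approx x\to(x\to y)$ (\ref{311014_05}), $(x\to y)\to(y\to x)\approx y\to x$ (\ref{250315_05}) and $(x\to y)'\to y\approx x\to y$ (\ref{250615_01}).

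The first normalization step is to collapse repeated-variable subterms: item (\ref{031114_04}) gives $y\to(y\to y)\approx y\to y$ and $x\to(x\to W)\approx x\to W$, and (\ref{250315_05}) gives the idempotency $(y\to y)\to(y\to y)\approx y\to y$. For the brackettings $1,2,3$, whose outermost operation has the form $w\to(\cdots)$ or $(\cdot)\to(\cdot)$, left-commutativity then suffices to reorder antecedents; for the brackettings $4$ and $5$, whose outermost operation is $(\cdots)\to w$, I would first rewrite the inner block with left-commutativity and then peel the trailing $\to w$ with (\ref{291014_10}). Carried out this way, nine of the ten pairs collapse cleanly. For instance, $(42\mathcal B12,42\mathcal D13)$ is immediate: left-commutativity turns $x\to(x\to(y\to x))$ and $x\to((x\to y)\to x)$ into $x\to(y\to(x\to x))$ and $(x\to y)\to(x\to x)$, which is literally (42D13). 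Likewise $(42\mathcal C12,42\mathcal E13)$ and $(42\mathcal C13,42\mathcal F12)$ reduce to a common identity, while $(42\mathcal D12,42\mathcal G13)$, $(42\mathcal D14,42\mathcal G14)$, $(42\mathcal D24,42\mathcal G34)$, $(42\mathcal E12,42\mathcal F13)$, $(42\mathcal E14,42\mathcal F14)$ and $(42\mathcal E24,42\mathcal F34)$ reduce to a common identity after the renaming $x\leftrightarrow y$ (the E--F pairs also using (\ref{291014_10}), (\ref{311014_05}) and (\ref{250315_05}) to simplify the brackettings with $\to$ at the top).

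The main obstacle is the pair $(42\mathcal B13,42\mathcal G12)$, the only one for which normalization does not land the two identities on the same word. Here the procedure reduces (42B13), after renaming, to $x\to(y\to y)\approx(y\to y)\to(x\to y)$, and reduces (42G12) to $x\to(y\to y)\approx(0\to y)\to(x\to y)$ (using (\ref{291014_10}) to rewrite $(y\to y)\to y$ as $(0\to y)\to y$). The two share the left-hand side, so the pair follows once I prove the auxiliary identity $\mathcal S\models(y\to y)\to(x\to y)\approx(0\to y)\to(x\to y)$, which is \emph{not} an instance of the reordering laws. The intended route is to apply the defining identity (I) to \emph{both} sides: writing $r:=x\to y$, identity (I) turns $(y\to y)\to r$ into $((r'\to y)\to(y\to r)')'$ and $(0\to y)\to r$ into $((r'\to 0)\to(y\to r)')'$, and then $r'\to y\approx r$ by (\ref{250615_01}) while $r'\to 0\approx r''\approx r$ by the involution, so both collapse to the common term $(r\to(y\to r)')'$. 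Verifying this double application of (I) — and checking that the shared leading variable lines up — is the delicate part; everything else is the bookkeeping of the nine routine pairs.
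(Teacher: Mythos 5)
Your proof is correct: every rewriting step you describe is licensed by identities valid in $\mathcal S$, and the double application of (I) for the pair $(42\mathcal B13, 42\mathcal G12)$ does go through --- with $r := x \to y$ one indeed has $r' \to y \approx r$ by Lemma \ref{general_properties2} (\ref{250615_01}) and $r' \to 0 \approx r'' \approx r$, so both sides collapse to the common term $(r \to (y \to r)')'$. However, your route differs from the paper's, which disposes of all ten pairs --- including $(42\mathcal B13,42\mathcal G12)$ --- by nothing more than Lemma \ref{properties_of_I20_MC} (\ref{310516_01}) (left-commutativity, applied also inside subterms) and a renaming of variables. Your ``main obstacle'' is an artifact of your normalization order: by collapsing $x \to (x \to (y \to x))$ to $x \to (y \to x)$ via Lemma \ref{general_properties2} (\ref{031114_04}) on the B-side, and rewriting $(y \to y) \to y$ as $(0 \to y) \to y$ via Lemma \ref{general_properties2} (\ref{291014_10}) on the G-side, you destroyed the syntactic match. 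If instead you only reorder antecedents, left-commutativity sends the two members of (42B13) to $y \to (x \to (x \to x))$ and $y \to ((x \to x) \to x)$ respectively, and (42G12) with $x$ and $y$ interchanged is literally that same identity; no appeal to (I), to (\ref{031114_04}), (\ref{291014_10}), or to (\ref{250615_01}) is needed. Even within your own scheme, the auxiliary identity $(y\to y)\to(x\to y)\approx(0\to y)\to(x\to y)$ is a one-liner rather than the ``delicate part'': by left-commutativity it is equivalent to $x \to ((y \to y) \to y) \approx x \to ((0 \to y) \to y)$, whose core is an instance of (\ref{291014_10}), a rewriting you had already allowed yourself. So what the paper's approach buys is uniformity --- one law plus renaming settles the whole of $M$ --- while your approach, though sound, pays for aggressive normalization with extra machinery and a two-fold use of (I) on a single pair.
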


\begin{proof}
	Follows from Lemma \ref{properties_of_I20_MC} (\ref{310516_01}). Rename the variables, if necessary. 
	
\end{proof}

Let $N$ be the set consisting of the following varieties $42\mathcal A14$, $42\mathcal B14$, $42\mathcal B24$, $42\mathcal B34$, $42\mathcal B45$, $42\mathcal C14$, $42\mathcal C24$, $42\mathcal C34$, $42\mathcal C45$, $42\mathcal D14$, $42\mathcal D34$, $42\mathcal E12$, $42\mathcal E14$, $42\mathcal E23$, $42\mathcal E24$, $42\mathcal E34$, $42\mathcal E35$, $42\mathcal E45$, $42\mathcal F15$, $42\mathcal F23$, $42\mathcal F24$, $42\mathcal F45$, $42\mathcal G23$, $42\mathcal G24$, $42\mathcal G35$ , $42\mathcal G45$.

\begin{Theorem} \label{theorem_42X_equalTo_SL}
	If $\mathcal X \in N$ then $\mathcal X = \mathcal{SL}$.
\end{Theorem}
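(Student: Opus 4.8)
The plan is to prove, for every $\mathcal X \in N$, the inclusion $\mathcal X \subseteq \mathcal{SL}$; the reverse inclusion $\mathcal{SL} \subseteq \mathcal X$ is already available from Lemma \ref{including_relations_of_SL_42}. The whole argument funnels through a single reduction: if the defining identity of $\mathcal X$, together with the identities valid in $\mathcal S$, forces idempotence $x \to x \approx x$, then Lemma \ref{310516_09} yields $x' \approx x$, so $\mathbf A \in \mathcal{I}_{1,0}$; since $\mathbf A \in \mathcal S \subseteq \mathcal{MC}$, Lemma \ref{lemma_SL_I10_C} gives $\mathbf A \in \mathcal{MC} \cap \mathcal{I}_{1,0} = \mathcal{SL}$. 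Thus the real content is to show that each of the $26$ defining identities collapses to $x \to x \approx x$ over $\mathcal S$.

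The engine for the collapse is systematic normalisation followed by substitution of $0$. First I would rewrite each of the two terms using the $\mathcal S$-identities, most importantly $x \to y \approx x \to (x \to y)$ (Lemma \ref{general_properties2} (\ref{031114_04})), the commutation law $x \to (y \to z) \approx y \to (x \to z)$ (Lemma \ref{properties_of_I20_MC} (\ref{310516_01})), and the one-variable reductions of Lemma \ref{lemma_identities_S_onevariable}. A pleasant number of cases become, after this normalisation, literally one of the three length-$3$ laws (LALT), (FLEX), (RALT): for instance (42A14) reduces to $x \to (x \to y) \approx (x \to x) \to y$, i.e.\ (LALT), and (42C14) reduces to $x \to (y \to y) \approx (x \to y) \to y$, i.e.\ (RALT). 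For all such cases one simply invokes Theorem \ref{theorem_131216_01} to conclude equality with $\mathcal{SL}$.

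For the remaining identities the recipe is to substitute $0$ for a well-chosen variable. Two typical outcomes occur. Sometimes the substitution yields idempotence outright: putting $y := 0$ in the normalised form of (42C45), namely $(x \to y) \to y \approx ((x \to x) \to y) \to y$, gives $x'' \approx (x \to x)''$, that is $x \approx x \to x$. In the other cases the substitution first produces an auxiliary law such as $0 \to x \approx x$ or $0 \to x \approx x'$, and a second step is needed. For example, (42E12) normalises to $x \to (y \to y) \approx ((x \to y) \to x) \to y$; setting $y := 0$ and using Lemma \ref{general_properties} (\ref{cuasiConmutativeOfImplic}) gives $0 \to x \approx x$, after which setting $x := 0$ and applying Lemma \ref{general_properties_equiv} (\ref{LeftImplicationwithtilde}) collapses both sides to $y \to y \approx y$. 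The intermediate laws are converted to idempotence with the help of Lemma \ref{lemma_070616_01} (which supplies $(x \to y)' \approx x' \to y'$ once $0 \to x \approx x$ holds) or, in the $0 \to x \approx x'$ case, of Lemma \ref{general_properties} (\ref{cuasiConmutativeOfImplic}) together with Lemma \ref{general_properties2} (\ref{281014_05}) and the reflexivity facts Lemma \ref{general_properties_equiv} (\ref{reflexivity}), (\ref{LeftImplicationwithtilde}).

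I expect the main obstacle to be twofold. First is the bookkeeping: each of the $26$ identities needs its own normalisation and its own optimal substitution, and an unlucky choice (substituting the wrong variable) leads to a dead end such as $(x \to x) \to x \approx x \to x$ rather than to idempotence, so the choices must be made with care. Second, the genuinely delicate cases are the interleaved patterns coming from the words $E = \langle x,y,x,y\rangle$ and $F = \langle x,y,y,x\rangle$ (for example (42E23) and (42F23)), whose normalisation requires the more intricate one-variable identities Lemma \ref{lemma_identities_S_onevariable} (\ref{311016_06}) and (\ref{311016_07}), and for which idempotence emerges only after a two-step substitution. Organising the $26$ cases into a few reduced normal forms, so that one argument dispatches each group, is the way to keep the proof manageable.
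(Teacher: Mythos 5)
Your proposal is correct and takes essentially the same approach as the paper's proof: both use Lemma \ref{including_relations_of_SL_42} for the inclusion $\mathcal{SL} \subseteq \mathcal X$, and then, case by case, substitute special elements ($0$, $0'$, $a'$) into each defining identity and simplify with the $\mathcal S$-identities until $x \to x \approx x$ (or $x' \approx x$) drops out, concluding via Lemma \ref{310516_09} and Lemma \ref{lemma_SL_I10_C}; your worked cases (42A14, 42C14, 42C45, 42E12) are all sound. The only cosmetic difference is that you route a few cases through Theorem \ref{theorem_131216_01} after normalisation where the paper substitutes directly, and your forecast of the hard cases is slightly off --- (42E23) falls to a single substitution, while the genuine outliers are $42\mathcal B34$ and $42\mathcal F15$, which need the defining identity reused after an auxiliary law ($0 \to x \approx x$, resp.\ $0' \approx 0$) has been extracted, exactly the two-step pattern your recipe already provides.
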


\begin{proof}
		In the proof below the following list of statements will be useful.
		
		\medskip
		
		\noindent \begin{minipage}{0.1 \textwidth}
			($\ast$)
		\end{minipage}
		\begin{minipage}{0.9 \textwidth}
			The identity (\ref{eq_I20}), 
			Lemma \ref{general_properties_equiv} (\ref{TXX}),
			Lemma \ref{general_properties_equiv} (\ref{LeftImplicationwithtilde}), 
			Lemma \ref{general_properties2} (\ref{281014_05}),
			Lemma \ref{general_properties3} (\ref{031114_04}) and 
			Lemma \ref{properties_of_I20_MC} (\ref{310516_01}).
		\end{minipage}

\medskip
	Let $\mathcal X \in N$.  In view of Lemma \ref{including_relations_of_SL_42}, it suffices to prove that $\mathcal X \subseteq \mathcal{SL}$.  In fact, by Lemma \ref{lemma_SL_I10_C}, it suffices to prove that $\mathcal X \models x' \approx x$. 

	 Let $\mathbf A \in \mathcal X$ and let 
	$a \in A$.
	
	To facilitate a uniform presentation (and to make the proof shorter), we introduce the following notation, where $x_0, y_0 \in \mathbf A$:\\
	
	The notation	
	$$ \mathcal X/ x_0, y_0$$ denotes the following 
	statement: 
	
	\begin{center}	 
		``In the identity (X) that defines the variety $\mathcal X$, relative to $\mathcal S$, if we assign $x:= x_0, y:= y_0$ 
		(and simplify it using the list ($\ast$)), 
		then $\mathbf{A} \models  x \to x \approx x$''.\\ 
	\end{center}
	
	Firstly, we consider the varieties associated with the following statements:
	
	\begin{multicols}{2}
		
		\begin{enumerate}
			
\item $42\mathcal A14/ a, 0$,  
\item $42\mathcal B14/ a, a'$,
\item $42\mathcal B24/ a, a'$,
\item $42\mathcal C24/ a, 0$,
\item $42\mathcal C45/ a, 0$,
\item $42\mathcal E12/ a', a$,
\item $42\mathcal E23/ a', 0$, \label{061216_01}
\item $42\mathcal E35/ a', 0$,
\item $42\mathcal F24/ a, 0$,
\item $42\mathcal F45/ a, 0$.
			
		\end{enumerate}
		
	\end{multicols}
We prove (\ref{061216_01}) as an illustration.

\noindent $a $
$\overset{   \ref{general_properties_equiv} (\ref{LeftImplicationwithtilde}) 
}{=}  a' \to a $
$\overset{   \ref{general_properties_equiv} (\ref{LeftImplicationwithtilde}) 
}{=}  (a'' \to a') \to a $
$\overset{  x \approx x'' 
}{=}  (a \to a') \to a $
$\overset{   \ref{general_properties2} (\ref{291014_10}) 
}{=}  (0 \to a') \to a $
$\overset{   \ref{properties_of_I20_MC} (\ref{310516_02}) 
}{=}  a' \to (0 \to a')' $
$\overset{  
}{=}  a' \to ((0 \to a') \to 0) $
$\overset{  (42E23) 
}{=}  (a' \to 0) \to (a' \to 0) $
$\overset{  
}{=}  a'' \to a'' $
$\overset{  x \approx x''
}{=}  a \to a $.
	
Hence the statement (7) is true.  Similarly, one can verify the rest of the above statements is true, from which it follows that, in each of the above cases,
	$\mathcal X \models x \to x \approx x$.  
	
	Then, applying Lemma \ref{310516_09}, we get that $\mathcal X \models x' \approx x$.  \\	 

	The notation, where $x_0, y_0,  x_1, y_1, \in \mathbf{A}$, 
	
	$$\mathcal X / x_0, y_0 / x_1, y_1\ /p \approx q$$
	is an abbreviation for the following statement: \\
	
	``In the identity  (X) that defines the variety $\mathcal X$, relative to $\mathcal S$, if we assign $x:= x_0, y:= y_0$,
	(and simplify (X) using the appropriate lemmas from the list ($\ast$)), we obtain that $\mathbf A \models 0' \approx 0$; 
	and then we assign $x:= x_1, y:= y_1$ in the identity (X) (and simplify it using $0'=0$ and the list ($\ast$)), then $\mathbf A \models p \approx q$.''\\

	Secondly, consider the varieties associated with the following statements:
	\begin{multicols}{2}
		
		\begin{enumerate}
\item $42\mathcal B45 / 0, 0 / a, a'\ / x \to x \approx x$,
\item $42\mathcal C34 / 0, 0 / a, 0\ / x \to x \approx x$,	\label{061216_02}
\item $42\mathcal E14 / 0, 0 / a, 0\ / x \to x \approx x$,
\item $42\mathcal E24 / 0, 0 / a, 0 \ / x \to x \approx x$,
\item $42\mathcal E34 / 0, 0 / 0, a\ / x \to x \approx x$,
\item $42\mathcal E45 / 0, 0 / a, 0\ / x \to x \approx x$,
\item $42\mathcal F23 / 0, 0' / a, 0\ / x \to x \approx x$. 
		
		\end{enumerate}
	\end{multicols}
As a sample,  we prove (\ref{061216_02}) below:

\noindent $0 $
$\overset{   \ref{general_properties_equiv} (\ref{LeftImplicationwithtilde}) 
}{=}  0' \to 0 $
$\overset{   \ref{general_properties_equiv} (\ref{LeftImplicationwithtilde}) 
}{=}  (0'' \to 0') \to 0 $
$\overset{  
}{=}  (0 \to 0') \to 0  $
$\overset{  
}{=}  (0 \to (0 \to 0)) \to 0 $
$\overset{  (42C34) 
}{=}  (0 \to 0) \to (0 \to 0) $
$\overset{  
}{=}  0' \to 0' $
$\overset{   \ref{general_properties_equiv} (\ref{TXX}) 
}{=}  0'  $
then
\noindent $a \to a $
$\overset{   \ref{general_properties_equiv} (\ref{TXX}) 
}{=}  0' \to (a \to a) $
$\overset{  0 \approx 0' 
}{=}  0 \to (a \to a) $
$\overset{   \ref{properties_of_I20_MC} (\ref{310516_02}) 
}{=}  0 \to (a' \to a') $
$\overset{   \ref{general_properties2} (\ref{191114_05}) 
}{=}  0 \to (a \to a)' $
$\overset{  
}{=}  0 \to ((a \to a) \to 0) $
$\overset{   \ref{properties_of_I20_MC} (\ref{310516_01}) 
}{=}  (a \to a) \to (0 \to 0) $
$\overset{  (42C34) 
}{=}  (a \to (a \to 0)) \to 0 $
$\overset{  
}{=}  (a \to a')' $
$\overset{   \ref{general_properties_equiv} (\ref{LeftImplicationwithtilde}) 
}{=}  a'' $
$\overset{  x \approx x'' 
}{=}  a $.
Hence, (\ref{061216_02}) holds.  Similarly, one can verify that each of the above statements is true.  Hence, it follows that  in each case $\mathcal{X} \models  x \to x \approx x$.  
	Then, applying Lemma \ref{310516_09}, we get that $\mathcal X \models x' \approx x$.  	 
	
	Thirdly, consider the varieties associated with the following statements:
	
	\begin{multicols}{2}
		\begin{enumerate}
\item $42\mathcal C14 / 0, 0 / a, 0\ / x' \approx x$,
\item $42\mathcal D14 / 0, 0 / 0, a \ / x' \approx x$,	
\item $42\mathcal D34 / 0, 0 / 0, a\ / x' \approx x$,	
\item $42\mathcal G23 / 0', 0 / a, 0\ / x' \approx x$,	
\item $42\mathcal G24 / 0, 0 / a, 0\ / x' \approx x$,	
\item $42\mathcal G35 / 0', 0 / a, 0 \ / x' \approx x$,
\item $42\mathcal G45 / 0, 0 / a, 0\ / x'\approx x$.
		\end{enumerate}
	\end{multicols}
	
	It is easy to verify that the above statements are true.
	Hence, it follows in each of the above cases that $\mathcal X \models x' \approx x.$
	
	Thus, the varieties still left to consider are $42\mathcal B34$ and $42\mathcal F15$.

Let $\mathbf A \in 42\mathcal B34$ and let $a \in A$. Since
\noindent $a $
$\overset{  x \approx x'' 
}{=}  a'' $
$\overset{   \ref{general_properties_equiv} (\ref{TXX}) 
}{=}  (0' \to a')' $
$\overset{  
}{=}  ((0 \to 0) \to (a \to 0))' $
$\overset{  (42B34) 
}{=}  ((0 \to (0 \to a)) \to 0)' $
$\overset{   \ref{general_properties2} (\ref{031114_04}) 
}{=}  ((0 \to a) \to 0)' $
$\overset{  x \approx x'' 
}{=}  0 \to a $,
we have that
\begin{equation} \label{021116_01}
\mathbf A \models x \approx 0 \to x.
\end{equation}
Therefore,
\noindent $a \to a $
$\overset{  (\ref{021116_01}) 
}{=}  (0 \to a) \to a $
$\overset{   \ref{general_properties2} (\ref{291014_10}) 
}{=}  (a \to a) \to a $
$\overset{   \ref{general_properties_equiv} (\ref{TXX}) 
}{=}  (a \to a) \to (0' \to a) $
$\overset{  (42B34) 
}{=}  (a \to (a \to 0')) \to a $
$\overset{   \ref{general_properties2} (\ref{031114_04}) 
}{=}  (a \to 0') \to a $
$\overset{   \ref{general_properties2} (\ref{281014_05}) 
}{=}  (a \to a') \to a $
$\overset{  x \approx x'' 
}{=}  (a'' \to a') \to a $
$\overset{   \ref{general_properties_equiv} (\ref{LeftImplicationwithtilde}) 
}{=}  a' \to a $
$\overset{   \ref{general_properties_equiv} (\ref{LeftImplicationwithtilde})} 
{=}  a $.
Hence 
$$
\mathbf A \models x \approx x \to x.
$$
Then, applying Lemma \ref{310516_09}, we get that $\mathbf A \models x' \approx x$.

Let $\mathbf A \in 42\mathcal F15$ and let $a \in A$. If we replace $x:= 0$ and $y:= 0'$ we obtain that 

\begin{equation} \label{021116_02}
\mathbf A \models 0 \approx 0'.
\end{equation}
 Since
\noindent $a $
$\overset{   \ref{general_properties_equiv} (\ref{TXX}) 
}{=}  0' \to a $
$\overset{  (\ref{021116_02}) 
}{=}  0 \to a $
$\overset{   \ref{general_properties_equiv} (\ref{LeftImplicationwithtilde}) 
}{=}  0 \to (a' \to a) $
$\overset{   x \approx x'' 
}{=}  0 \to (a' \to a'') $
$\overset{   
}{=}  0 \to (a' \to (a' \to 0)) $
$\overset{  (42F15) 
}{=}  ((0 \to a') \to a') \to 0 $
$\overset{  (\ref{021116_02}) 
}{=}  ((0' \to a') \to a') \to 0 $
$\overset{   \ref{general_properties_equiv} (\ref{TXX}) 
}{=}  (a' \to a') \to 0 $
$\overset{   \ref{properties_of_I20_MC} (\ref{310516_02}) 
}{=}  (a \to a) \to 0 $,
the identity
\begin{equation} \label{021116_03}
\mathbf A \models x \approx (x \to x)'
\end{equation}
holds in $\mathbf A$. Hence
\noindent $a' $
$\overset{  (\ref{021116_03}) 
}{=}  (a' \to a')' $
$\overset{   \ref{properties_of_I20_MC} (\ref{310516_02}) 
}{=}  (a \to a)' $
$\overset{  (\ref{021116_03}) 
}{=}  a $,
proving the lemma.
\end{proof}

Let $I$ be the set consisting of the following identities: $(42A24)$, $(42A45)$, $(42B13)$, $(42B15)$, $(42B23)$, $(42B25)$, $(42D24)$, $(42D45)$, $(42F25)$ and $(42G15)$.

\begin{Lemma} \label{lema_1_identity}
	Let $(x) \in I$. If $\mathbf A \models (x)$ then $A \models 0 \to x \approx x$.
\end{Lemma}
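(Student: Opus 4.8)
The statement to prove is \textbf{Lemma \ref{lema_1_identity}}: for each identity $(x) \in I = \{(42A24),(42A45),(42B13),(42B15),(42B23),(42B25),(42D24),(42D45),(42F25),(42G15)\}$, if $\mathbf{A} \models (x)$ then $\mathbf{A} \models 0 \to x \approx x$.

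The plan is to treat each of the ten identities essentially by the same substitution-and-simplify strategy that has been used repeatedly throughout Section 5. For a fixed identity $(x)$ defining a variety $42\mathcal{X}ij$, I would substitute a convenient pair of elements (typically something like $x := a$, $y := 0$, or $x := a$, $y := a'$, or $x := 0$, $y := a$, depending on the shape of the word $X$) into the defining identity and then reduce both sides using the standing toolkit, namely the list ($\ast$) from the proof of Theorem \ref{theorem_42X_equalTo_SL}: the identity \eqref{eq_I20}, Lemma \ref{general_properties_equiv}(\ref{TXX}) and (\ref{LeftImplicationwithtilde}), Lemma \ref{general_properties2}(\ref{281014_05}), Lemma \ref{general_properties3}(\ref{031114_04}), and Lemma \ref{properties_of_I20_MC}(\ref{310516_01}). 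Additional items of Lemma \ref{general_properties2} and Lemma \ref{properties_of_I20_MC} (in particular (\ref{291014_10}), (\ref{311014_05}), (\ref{310516_02})) will likely be needed in individual cases. Crucially, unlike the varieties in $N$ handled in Theorem \ref{theorem_42X_equalTo_SL}, these ten identities should \emph{not} collapse to $x \to x \approx x$; instead, after the right substitution, the two sides simplify to $0 \to a$ and $a$ respectively, yielding exactly $0 \to x \approx x$ (which is the conclusion of Lemma \ref{Lemma_0_C_equal_0}'s hypothesis, and a weaker conclusion than $x' \approx x$).

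Concretely, for each identity I would first inspect the word $X = \langle x_1,x_2,x_3,x_4 \rangle$ and its two brackettings $i,j$, then choose the substitution that makes one side reduce to a subterm of the form $0 \to a$ (exploiting that $0'' = 0$, that $0' \to a = a$, and that $a' \to a = a$) while the other side reduces to $a$. For instance, in an identity of word type $A = \langle x,x,x,y\rangle$ or $B = \langle x,x,y,x\rangle$ the substitution $y := 0$ turns a trailing $\to y$ into $\to 0$, i.e.\ into priming, and repeated use of $x'' = x$ together with Lemma \ref{general_properties2}(\ref{031114_04}) collapses the nested left factor. The output I am aiming for in every case is the single clean chain $a = \dots = 0 \to a$, which establishes $\mathbf{A} \models 0 \to x \approx x$ after universal generalization. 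Since the conclusion is $0 \to x \approx x$ rather than $x \to x \approx x$, I would \emph{not} invoke Lemma \ref{310516_09} here; the lemma's output feeds instead into the next stage of the development (presumably via Lemma \ref{Lemma_0_C_equal_0} or Lemma \ref{lemma_070616_01}).

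The main obstacle is not conceptual but combinatorial: there are ten distinct identities, and each requires selecting the \emph{correct} substitution pair and the correct ordering of rewrite steps so that the simplification actually terminates at $0 \to a$ and $a$, rather than at some other normal form (e.g.\ $x \to x \approx x$, which would wrongly push the variety down to $\mathcal{SL}$, or at a true equality yielding no information). Finding the right substitution for the trickier words — particularly $F = \langle x,y,y,x\rangle$ for $(42F25)$ and $G = \langle x,y,y,y\rangle$ for $(42G15)$, where the variable $y$ appears multiply and the natural substitution may need to be $x := 0$, $y := a$ (or $x:=a$, $y:=0'$) to produce an intermediate $0' \approx 0$ step as in the $42\mathcal{F}15$ case above — is where care is needed. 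I would display one representative case in full (mirroring the ``As a sample, we prove $(\ref{061216_02})$'' style) and assert that the remaining nine follow by analogous substitution-and-simplification, listing for each the substitution pair used so that a reader can reconstruct the computation mechanically.
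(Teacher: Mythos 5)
Your proposal matches the paper's own proof essentially step for step: the paper disposes of $(42A24)$, $(42A45)$, $(42B13)$, $(42B15)$, $(42B23)$, $(42B25)$ by exactly the substitution-and-simplification chains you describe (each ending in $a = \dots = 0 \to a$, using the toolkit you list), and disposes of $(42D24)$, $(42D45)$, $(42F25)$, $(42G15)$ by substituting constants ($x:=0,\,y:=0$; $x:=0,\,y:=0'$; $x:=0',\,y:=0$ respectively) to obtain $0' \approx 0$ and then invoking Lemma \ref{Lemma_0_C_equal_0} --- precisely the two-stage ``intermediate $0' \approx 0$'' route you anticipate for the harder words. The only deltas are that the paper also sends the two $D$-cases through that shortcut and, naturally, writes out the computations in full rather than one representative case, so your plan is correct and structurally identical to the published argument.
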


\begin{proof}
Let $a \in A$.  
 We consider the following cases:\\
Case 1:  $\mathbf A \models (42A24)$.  Then 
\noindent $a $
$\overset{   \ref{general_properties_equiv} (\ref{TXX}) 
}{=}  0' \to a $
$\overset{   \ref{general_properties_equiv} (\ref{LeftImplicationwithtilde}) 
}{=}  (0'' \to 0') \to a $
$\overset{  
}{=}  (0 \to 0') \to a $
$\overset{  
}{=}  (0 \to (0 \to 0)) \to a $
$\overset{  (42A24) 
}{=}  0 \to ((0 \to 0) \to a) $
$\overset{  
}{=}  0 \to (0' \to a) $
$\overset{   \ref{general_properties_equiv} (\ref{TXX}) \\
}{=}  0 \to a $.\\
Case 2:  $\mathbf A \models (42A45)$.  We have
\noindent $a $
$\overset{   \ref{general_properties_equiv} (\ref{TXX}) 
}{=}  0' \to a $
$\overset{   \ref{general_properties_equiv} (\ref{LeftImplicationwithtilde}) 
}{=}  (0'' \to 0') \to a $
$\overset{  
}{=}  (0 \to 0') \to a $
$\overset{  
}{=}  (0 \to (0 \to 0)) \to a $
$\overset{  (42A45) 
}{=}  ((0 \to 0) \to 0) \to a $
$\overset{  
}{=}  (0' \to 0) \to a $
$\overset{   \ref{general_properties_equiv} (\ref{TXX}) 
}{=}  0 \to a $.\\
Case 3:  $\mathbf A \models (42B13)$.  In this case we get 
\noindent $a $
$\overset{  x \approx x'' 
}{=}  a'' $
$\overset{  
}{=}  a' \to 0 $
$\overset{   \ref{general_properties_equiv} (\ref{TXX}) 
}{=}  0' \to (a' \to 0) $
$\overset{  
}{=}  (0 \to 0) \to (a' \to 0) $
$\overset{  (42B13) 
}{=}  0 \to (0 \to (a' \to 0)) $
$\overset{   \ref{general_properties2} (\ref{031114_04}) 
}{=}  0 \to (a' \to 0) $
$\overset{  
}{=}  0 \to a'' $
$\overset{  x \approx x''
}{=}  0 \to a $.\\
Case 4:  $\mathbf A \models (42B15)$.  Then
\noindent $a $
$\overset{  x \approx x'' 
}{=}  a'' $
$\overset{  
}{=}  a' \to 0 $
$\overset{   \ref{general_properties_equiv} (\ref{TXX}) 
}{=}  (0' \to a') \to 0 $
$\overset{  
}{=}  ((0 \to 0) \to a') \to 0 $
$\overset{  (42B15) 
}{=}  0 \to (0 \to (a' \to 0)) $
$\overset{   \ref{general_properties2} (\ref{031114_04}) 
}{=}  0 \to (a' \to 0) $
$\overset{  
}{=}  0 \to a'' $
$\overset{  x \approx x'' 
}{=}  0 \to a $.\\
If $\mathbf A \models (42B23)$,
\noindent $a $
$\overset{  x \approx x'' 
}{=}  a'' $
$\overset{  
}{=}  a' \to 0 $
$\overset{   \ref{general_properties_equiv} (\ref{TXX}) 
}{=}  0' \to (a' \to 0) $
$\overset{  
}{=}  (0 \to 0) \to (a' \to 0) $
$\overset{  (42B23) 
}{=}  0 \to ((0 \to a') \to 0) $
$\overset{   \ref{general_properties2} (\ref{031114_07}) 
}{=}  0 \to a'' $
$\overset{  x \approx x''
}{=}  0 \to a $.\\
Case 5:  $\mathbf A \models (42B25)$.  One has 
\noindent $a $
$\overset{  x \approx x'' 
}{=}  a'' $
$\overset{  
}{=}  a' \to 0 $
$\overset{   \ref{general_properties_equiv} (\ref{TXX}) 
}{=}  (0' \to a') \to 0 $
$\overset{  
}{=}  ((0 \to 0) \to a') \to 0 $
$\overset{  (42B25) 
}{=}  0 \to ((0 \to a') \to 0) $
$\overset{   \ref{general_properties2} (\ref{031114_07}) 
}{=}  0 \to a'' $
$\overset{  x \approx x''
}{=}  0 \to a $.\\
Case 6:  $\mathbf A \models (42D24).$  Then, setting $x:=0$ and $y := 0$ in the identity $(42D24)$, we obtain that $0' = 0$. Then apply Lemma \ref{Lemma_0_C_equal_0}.\\
Case 7:  $\mathbf A \models (42D45)$,  Then, set  $x:=0$ and $y := 0$ in the identity $(42D45)$ to obtain that $0' = 0$.  Now, apply Lemma \ref{Lemma_0_C_equal_0}.\\
Case 8:  $\mathbf A \models (42F25)$. and we consider $x:=0$ and $y := 0'$ in the identity $(42F25)$ we obtain that $0' = 0$. Then apply Lemma \ref{Lemma_0_C_equal_0}.\\
Case 9: $\mathbf A \models (42G15)$.   Then, setting $x:=0'$ and $y := 0$ in the identity $(42G15)$, it is easy to obtain that $0' = 0$. Then apply Lemma \ref{Lemma_0_C_equal_0}.
%
%
%
%
\end{proof}


\begin{Theorem} \label{031116_Theo1}
	If $\mathbf A \models 0 \to x \approx x$, then $\mathbf A \models (y)$ for all $(y) \in I$.
\end{Theorem}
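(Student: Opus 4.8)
The plan is to recognize this theorem as the exact converse of Lemma~\ref{lema_1_identity}: there we showed that each identity in $I$ forces $0 \to x \approx x$, and here we must show conversely that the hypothesis $0 \to x \approx x$ already entails every member of $I$. Together the two results identify all ten of the corresponding subvarieties with the subvariety of $\mathcal{S}$ cut out by $0 \to x \approx x$. The first observation is that the hypothesis is very strong: it gives $0' = 0 \to 0 = 0$ and, via Lemma~\ref{lemma_070616_01}, the full De Morgan law $(x \to y)' \approx x' \to y'$; most importantly it activates Lemma~\ref{lemma_5terms}, so that all five terms $t_1, \dots, t_5$ coincide. This five-term collapse, together with the swap law \ref{properties_of_I20_MC}(\ref{310516_01}) and the quasi-commutativity \ref{properties_of_I20_MC}(\ref{310516_02}), will be the main engine of the argument.

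Two of the ten identities are essentially free. The identity $(42A24)$, namely $x \to ((x \to x) \to y) \approx (x \to (x \to x)) \to y$, is precisely Lemma~\ref{lemma_091116_01}. The identity $(42A45)$, namely $(x \to (x \to x)) \to y \approx ((x \to x) \to x) \to y$, follows by substituting the conclusion of Lemma~\ref{031116_01_Assoc1}, that $x \to (x \to x) \approx (x \to x) \to x$, into the leftmost argument. For the remaining eight I would treat each word separately by a substitute-and-simplify chain, reducing both sides to a common normal form such as $y \to (x \to x)$ or $x \to (y \to y)$. The recurring simplifications are the contraction \ref{general_properties2}(\ref{031114_04}), which collapses $u \to (u \to w)$ to $u \to w$, the auxiliary identity $(x \to x) \to x \approx x \to x$ obtained from \ref{general_properties2}(\ref{291014_10}) together with the hypothesis, and repeated use of \ref{properties_of_I20_MC}(\ref{310516_01}) to pull a leading factor inside a parenthesized product. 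For the words $F = \langle x,y,y,x\rangle$ and $G = \langle x,y,y,y\rangle$ the reduction of the fully left-bracketed side (bracketing $5$) is handled by matching it to $t_1$ and then invoking Lemma~\ref{lemma_5terms} to rewrite it as $t_3$ or $t_5$, after which the swap law \ref{properties_of_I20_MC}(\ref{310516_01}) finishes the job.

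The main obstacle is organizational rather than conceptual: there is no single uniform reduction covering all ten identities, so the proof is a sequence of case analyses, and the longest chains (the $F$- and $G$-words) require alternating the five-term rewriting with the swap law several times before the two sides visibly agree. Care is also needed because distinct bracketings of the same word produce terms of genuinely different shape, so the choice of which instance of $t_i \approx t_j$ to apply, and the order in which the swap and contraction steps are performed, must be tailored to each identity. Getting these orderings right, rather than overcoming any deep structural difficulty, is where the effort lies.
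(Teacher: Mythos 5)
Your proposal is correct and follows essentially the same route as the paper: the paper's proof likewise uses the five-term collapse of Lemma \ref{lemma_5terms} as the main engine, disposes of (42A24) by Lemma \ref{lemma_091116_01} and of (42A45) via Lemma \ref{031116_01_Assoc1} (in the paper, via Lemma \ref{lemma_identities_S_onevariable}(\ref{311016_05})), and handles the remaining eight identities by exactly the kind of tailored chains you describe — matching the bracketing-5 sides to terms of $E$ and simplifying with Lemma \ref{properties_of_I20_MC}, the contraction \ref{general_properties2}(\ref{031114_04}), and (for the $D$-word cases) the De Morgan law from Lemma \ref{lemma_070616_01}. The only substantive case your sketch leaves implicit is (42D24), whose left factor $x \to (y \to x)$ has bracketing 4 and so escapes the five-term collapse, but the tools you list (De Morgan plus the quasi-commutativity \ref{properties_of_I20_MC}(\ref{310516_02})) do suffice there, as in the paper's computation.
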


\begin{proof}
 Observe that, by Lemma \ref{lemma_5terms}, 
\begin{equation} \label{231116_01}
\mathbf A \models t_1 \approx t_2 \approx t_4 \approx t_5.
\end{equation}
Also, by Lemma \ref{lemma_091116_01}, $\mathbf A \models (42A24).$ Hence, by Lemma \ref{lemma_identities_S_onevariable} (\ref{311016_05}), $\mathbf A \models (42A45).$  Now,  
 $\mathbf A \models (42B13)$ is true, since
\noindent $(a \to a) \to (b \to a) $
$\overset{   \ref{properties_of_I20_MC} (\ref{310516_01}) 
}{=}  b \to ((a \to a) \to a) $
$\overset{   \ref{031116_01_Assoc1} 
}{=}  b \to (a \to (a \to a)) $
$\overset{   \ref{properties_of_I20_MC} (\ref{310516_01}) 
}{=}  a \to (b \to (a \to a)) $
$\overset{   \ref{properties_of_I20_MC} (\ref{310516_01})
}{=}  a \to (a \to (b \to a)) $.
Observe that
\noindent $((a \to a) \to b) \to a $
$\overset{  
}{=}  t_1(a,a,b,a) $
$\overset{   (\ref{231116_01}) 
}{=}  t_2(a,a,b,a) $
$\overset{  
}{=}  b \to ((a \to a) \to a) $
$\overset{   \ref{031116_01_Assoc1} 
}{=}  b \to (a \to (a \to a)) $
$\overset{   \ref{properties_of_I20_MC} (\ref{310516_01}) 
}{=}  a \to (b \to (a \to a)) $
$\overset{   \ref{properties_of_I20_MC} (\ref{310516_01})
}{=}  a \to (a \to (b \to a)) $.
Consequently, $\mathbf A \models (42B15)$. From $(a \to a) \to (b \to a) =  t_5(b,a,a,a) = t_2(b,a,a,a) =
 a \to ((a \to b) \to a)$, it follows that $\mathbf A \models (42B23)$. Since $a \to ((a \to b) \to a) = t_2(b,a,a,a) = t_4(b,a,a,a) = ((a \to a) \to b) \to a$ we conclude that $\mathbf A \models (42B25)$.

Since $\mathbf A \models 0 \to x \approx x$, by Lemma \ref{lemma_070616_01}, 
	\begin{equation} \label{031116_04}
	\mathbf A \models (x \to y)' \approx x' \to y'.
	\end{equation}
Since
\noindent $(a \to (b \to a)) \to a $
$\overset{   \ref{properties_of_I20_MC} (\ref{310516_02}) 
}{=}  a' \to (a \to (b \to a))' $
$\overset{   \ref{properties_of_I20_MC} (\ref{310516_01}) 
}{=}  a' \to (b \to (a \to a))' $
$\overset{  (\ref{031116_04}) 
}{=}  a' \to (b' \to (a \to a)') $
$\overset{  (\ref{031116_04}) 
}{=}  a' \to (b' \to (a' \to a')) $
$\overset{   \ref{properties_of_I20_MC} (\ref{310516_02}) 
}{=}  a' \to (b' \to (a \to a)) $
$\overset{   \ref{properties_of_I20_MC} (\ref{310516_01}) 
}{=}  b' \to (a' \to (a \to a)) $
$\overset{   \ref{general_properties2} (\ref{281114_01}) 
}{=}  b' \to (a \to a) $
$\overset{   \ref{general_properties2} (\ref{031114_04}) 
}{=}  b' \to (a \to (a \to a)) $
$\overset{   \ref{properties_of_I20_MC} (\ref{310516_01}) 
}{=}  a \to (b' \to (a \to a)) $
$\overset{   \ref{properties_of_I20_MC} (\ref{310516_02}) 
}{=}  a \to (b' \to (a' \to a')) $
$\overset{   \ref{properties_of_I20_MC} (\ref{310516_01}) 
}{=}  a \to (a' \to (b' \to a')) $
$\overset{  (\ref{031116_04}) 
}{=}  a \to (a' \to (b \to a)') $
$\overset{   \ref{properties_of_I20_MC} (\ref{310516_02})
}{=}  a \to ((b \to a) \to a) $.
$\mathbf A$ satisfies the identity $(42D24)$. Notice that $\mathbf A$ satisfies the identity $(42D45)$, in view of
\noindent $((a \to b) \to a) \to a $
$\overset{  
}{=}  t_1(a,b,a,a) $
$\overset{   (\ref{231116_01}) 
}{=}  t_2(a,b,a,a) $
$\overset{  
}{=}  a \to ((b \to a) \to a) $
$\overset{ using (42D24) 
}{=}  a \to ((b \to a) \to a) $.
Since $((a \to b) \to b) \to a = t_4(b,b,a,a) = t_2(b,b,a,a) =  a \to ((b \to b) \to a)$, the algebra $\mathbf A \models (42F25)$. Also, since
\noindent $((a \to b) \to b) \to b $
$\overset{  
}{=}  t_4(b,b,a,b) $
$\overset{   (\ref{231116_01}) 
}{=}  t_2(b,b,a,b) $
$\overset{  
}{=}  a \to ((b \to b) \to b) $
$\overset{   \ref{031116_01_Assoc1} 
}{=}  a \to (b \to (b \to b)) $,
we have that $\mathbf A \models (42G15)$.
\end{proof}

Using Lemma \ref{lema_1_identity} and Theorem \ref{031116_Theo1}, one can easily verify the following Theorem.

\begin{Theorem} \label{theorem_081116_01}
$42\mathcal A24 =$ $42\mathcal A45 =$ $42\mathcal B13 =$ $42\mathcal B15=$ $42\mathcal B23 =$ $42\mathcal B25 =$ $42\mathcal D24 =$ $42\mathcal D45 =$ $42\mathcal F25 = $ $42\mathcal G15$.
\end{Theorem}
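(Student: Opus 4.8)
The plan is to establish that all ten varieties in the displayed equality coincide by showing that each of the ten defining identities is, relative to $\mathcal{S}$, equivalent to the single condition $0 \to x \approx x$. The architecture for this is already in place: Lemma \ref{lema_1_identity} gives one direction (each identity in $I$ forces $0 \to x \approx x$), and Theorem \ref{031116_Theo1} gives the converse (the hypothesis $0 \to x \approx x$ forces every identity in $I$). Since $I$ consists of exactly the ten identities $(42A24), (42A45), (42B13), (42B15), (42B23), (42B25), (42D24), (42D45), (42F25), (42G15)$ that define the ten varieties in the statement, combining these two results yields that, for each such identity $(X)$ and each $\mathbf{A} \in \mathcal{S}$, we have $\mathbf{A} \models (X)$ if and only if $\mathbf{A} \models 0 \to x \approx x$.

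Concretely, I would argue as follows. Fix any identity $(X) \in I$ and let $\mathcal{X}$ be the variety it defines relative to $\mathcal{S}$. If $\mathbf{A} \in \mathcal{X}$, then $\mathbf{A} \models (X)$, so by Lemma \ref{lema_1_identity} we get $\mathbf{A} \models 0 \to x \approx x$. Conversely, if $\mathbf{A} \in \mathcal{S}$ satisfies $0 \to x \approx x$, then by Theorem \ref{031116_Theo1} we have $\mathbf{A} \models (Y)$ for \emph{all} $(Y) \in I$, in particular $\mathbf{A} \models (X)$, so $\mathbf{A} \in \mathcal{X}$. Hence each of the ten varieties $\mathcal{X}$ is precisely the subvariety of $\mathcal{S}$ determined by $0 \to x \approx x$; since this characterization does not depend on which $(X) \in I$ was chosen, all ten varieties are equal to one another.

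The only subtlety worth flagging is bookkeeping rather than mathematics: one must check that the set $I$ named in the hypotheses of Lemma \ref{lema_1_identity} and Theorem \ref{031116_Theo1} is literally the same ten-element list appearing in the statement of Theorem \ref{theorem_081116_01}, and that the correspondence between an identity $(42\mathcal{X}ij)$ and its variety $42\mathcal{X}ij$ is the one fixed by the naming convention in the earlier definitions. Given that this matching is exact, no genuine computation remains; the two prior results were precisely the two halves of the equivalence, and the theorem is their immediate conjunction. I therefore expect no real obstacle — the heavy lifting (the ten derivations forcing $0 \to x \approx x$, and the ten derivations producing the identities from $0 \to x \approx x$ via the term-equalities of Lemma \ref{lemma_5terms} and Lemma \ref{031116_01_Assoc1}) has already been done in the preceding two results, and this theorem is the clean synthesis.
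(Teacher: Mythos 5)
Your proposal is correct and is exactly the paper's own argument: the paper derives this theorem as the immediate conjunction of Lemma \ref{lema_1_identity} (each identity in $I$ forces $0 \to x \approx x$) and Theorem \ref{031116_Theo1} (conversely, $0 \to x \approx x$ yields every identity in $I$), so each of the ten varieties is the subvariety of $\mathcal{S}$ determined by $0 \to x \approx x$. No gap remains, and your bookkeeping check that $I$ matches the ten varieties in the statement is indeed the only point to verify.
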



\begin{Lemma} \label{Old_Lemma_10.3(2)}
$42\mathcal A24 = 31\mathcal A12$. 
\end{Lemma}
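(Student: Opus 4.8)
The plan is to show that both $42\mathcal A24$ and $31\mathcal A12$ coincide, relative to $\mathcal S$, with the subvariety defined by $0 \to x \approx x$, and then conclude equality by establishing the two inclusions through this common condition.

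First I would prove $42\mathcal A24 \subseteq 31\mathcal A12$. Let $\mathbf A \in 42\mathcal A24$, so $\mathbf A \models (42A24)$. Since $(42A24) \in I$, Lemma~\ref{lema_1_identity} gives $\mathbf A \models 0 \to x \approx x$. Then Lemma~\ref{031116_01_Assoc1} yields $\mathbf A \models x \to (x \to x) \approx (x \to x) \to x$, which is exactly $(31A12)$, so $\mathbf A \in 31\mathcal A12$.

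For the reverse inclusion $31\mathcal A12 \subseteq 42\mathcal A24$, the crux is to extract $0' \approx 0$ from the one-variable identity $(31A12)$. Let $\mathbf A \in 31\mathcal A12$ and specialize $x := 0$ in $(31A12)$ to get $0 \to (0 \to 0) \approx (0 \to 0) \to 0$. I would then invoke two basic identities of $\mathcal S$: $0 \to 0' = 0'$, which follows from Lemma~\ref{general_properties_equiv}\,(\ref{LeftImplicationwithtilde}) applied at $0'$ together with $0'' = 0$; and $0' \to 0 = 0$, which is Lemma~\ref{general_properties_equiv}\,(\ref{TXX}) at $0$. Reading these through the specialized identity gives the chain $0' = 0 \to 0' = 0 \to (0 \to 0) = (0 \to 0) \to 0 = 0' \to 0 = 0$, so $\mathbf A \models 0' \approx 0$. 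Lemma~\ref{Lemma_0_C_equal_0} then delivers $\mathbf A \models 0 \to x \approx x$, and finally Lemma~\ref{lemma_091116_01} (equivalently Theorem~\ref{031116_Theo1}, since $(42A24) \in I$) produces $\mathbf A \models (42A24)$, whence $\mathbf A \in 42\mathcal A24$.

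I expect no serious obstacle here, since the substance is simply that each defining identity is equivalent over $\mathcal S$ to $0 \to x \approx x$. The only mildly delicate point is that the single-variable law $(31A12)$ is already strong enough to force $0' \approx 0$; this is settled at once by the substitution $x := 0$, exactly as in the derivation of $0' \approx 0$ in the proof of Lemma~\ref{Old_Lemma_10.3(3)}.
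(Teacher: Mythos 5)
Your proof is correct and follows essentially the same route as the paper's: both directions pivot on showing that each identity is equivalent, relative to $\mathcal S$, to $0 \to x \approx x$, using Lemma~\ref{lema_1_identity} for one inclusion and the substitution $x := 0$ in (31A12) followed by Lemma~\ref{lemma_091116_01} for the other. The only differences are cosmetic: where the paper passes from $0 \approx 0'$ to $0 \to x \approx x$ by a short inline computation, you invoke Lemma~\ref{Lemma_0_C_equal_0}, and where the paper verifies (31A12) from $0 \to x \approx x$ by hand, you cite Lemma~\ref{031116_01_Assoc1} --- both are legitimate shortcuts through lemmas already established in the paper.
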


\begin{proof}
Let $\mathbf A \in 31\mathcal A12$. Since, replacing $x:=0$ in the identity (31A12), we obtain $0 = 0'$ then $a = a'' = a' \to 0 = a' \to 0' = 0 \to a$ by Lemma \ref{general_properties} (\ref{cuasiConmutativeOfImplic}). Then, by Lemma \ref{lemma_091116_01},  $\mathbf A \in 42\mathcal A24$. Therefore $31\mathcal A12 \subseteq 42\mathcal A24$.

For the converse let consider $\mathbf A \in 42\mathcal A24$. Let $a \in A$. By Lemma \ref{lema_1_identity}
\begin{equation} \label{091116_02}
\mathbf A \models 0 \to x \approx x.
\end{equation}
Hence,
\noindent $(a \to a) \to a $
$\overset{   \ref{general_properties2} (\ref{291014_10}) 
}{=}  (0 \to a) \to a $
$\overset{(\ref{091116_02}) 
}{=}  a \to a $
$\overset{   \ref{general_properties2} (\ref{031114_04})
}{=}  a \to (a \to a) $.
\end{proof}

Let $J$ be the set consisting of the following identities: $(42B12)$, $(42B35)$, $(42C23)$, $(42C25)$, $(42C35)$ and $(42G25)$.

\begin{Lemma} \label{lema_1_identityB}
	Let $(x) \in J$. If $\mathbf A \models (x)$ then $A \models 0 \to (x \to x) \approx x \to x$.
\end{Lemma}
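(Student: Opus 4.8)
The plan is to treat each identity in $J$ one at a time, in every case first establishing a convenient normal form for $0'$ or for the constant behaviour, and then computing $0 \to (x \to x)$ until it collapses to $x \to x$. Before starting the casework I would record, as in the style of Lemma \ref{lema_1_identity}, the tools I expect to use repeatedly: the involution $x'' \approx x$, the reflexivity-type facts of Lemma \ref{general_properties_equiv} (in particular $0' \to x \approx x$ and $x' \to x \approx x$), the semilattice-flavoured rewrites of Lemma \ref{general_properties2} (notably $x \to y \approx x \to (x \to y)$ and $0 \to (0 \to x)' \approx 0 \to x'$), and the commutativity-of-antecedents identities of Lemma \ref{properties_of_I20_MC}. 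The target $0 \to (x \to x) \approx x \to x$ is exactly the hypothesis of Lemma \ref{lemma_071116_A}, so proving it for each member of $J$ is what lets the next results feed into that machinery.

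First I would dispatch the identities whose words force a strong collapse by substituting constants. For instance, in several of these identities (e.g. $(42C35)$, $(42G25)$) substituting $x := 0$, $y := 0$ — or $x := 0'$, $y := 0$ where the letter ordering demands it — should immediately yield $0' \approx 0$; once $0' \approx 0$ holds, Lemma \ref{Lemma_0_C_equal_0} gives $0 \to x \approx x$, and then $0 \to (x \to x) \approx x \to x$ is trivial. So for those members the work reduces to a single constant substitution plus an appeal to Lemma \ref{Lemma_0_C_equal_0}. I would handle all such "constant-collapsing" cases together and note explicitly which substitution produces $0' \approx 0$ in each.

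The genuinely computational cases are the remaining ones — I expect $(42B12)$, $(42B35)$, and the $(42C\cdot)$ identities that do not collapse to $0' \approx 0$ — where the identity does not trivialise under a constant substitution and one must instead specialise the defining identity so that its left side becomes $0 \to (x \to x)$ (or a term rewritable to it via the list $(\ast)$-type lemmas) while its right side simplifies to $x \to x$. Concretely, for a word like B: $\langle x,x,y,x\rangle$ I would substitute $y := 0$ (and possibly $x := $ the working variable) so that the occurrences of $y$ turn into $0$'s that can be absorbed using Lemma \ref{general_properties2} (\ref{031114_04}) and (\ref{031114_07}), and the remaining $x$-occurrences reassemble into $x \to x$. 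The main obstacle will be this case: matching the bracketing of the specific identity to the shape $0 \to (x \to x)$ requires carefully chosen substitutions and then a chain of rewrites using Lemma \ref{properties_of_I20_MC} (\ref{310516_01}), (\ref{310516_02}) together with the $\mathcal{I}_{2,0}$-identities, and the bookkeeping for each distinct bracketing number ($2$, $3$, $5$) differs. I would present one representative derivation in full (a single displayed chain of $\overset{\text{ref}}{=}$ steps, exactly in the paper's established format) and then assert that the others follow by the same method with the indicated substitutions, so that the proof stays short while remaining verifiable.
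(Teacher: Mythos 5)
There is a genuine gap, and it is not a small one: your ``constant-collapsing'' cases do not exist. You propose that for several identities in $J$ --- you name $(42C35)$ and $(42G25)$ explicitly --- a substitution of constants yields $0' \approx 0$, after which Lemma \ref{Lemma_0_C_equal_0} gives $0 \to x \approx x$ and the target is trivial. This is impossible for \emph{every} member of $J$. The algebra $\mathbf{2_b}$ satisfies all six identities in $J$ (for instance, in $(42C35)$ both sides evaluate to $1$ since $x \to x \approx 1$ there, and similarly for the others; structurally, this is because the $J$-identities all define $43\mathcal F25$, which contains $\mathcal{BA}$ by Theorem \ref{theo_4_dist_var}), yet in $\mathbf{2_b}$ one has $0' = 1 \neq 0$. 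Hence no substitution into an identity of $J$ can ever produce $0' \approx 0$; if you try it, you only get trivially true equations such as $0' \approx 0'$. The deeper point is that your strategy, if it worked for even one identity in $J$, would prove the \emph{stronger} conclusion $0 \to x \approx x$ for that variety; since all the $J$-varieties coincide with $43\mathcal F25$ (Theorem \ref{theo_081116_01}) and the identity $0 \to x \approx x$ characterizes $43\mathcal A23$ (Lemmas \ref{lemma_111116_01} and \ref{lemma_111116_02}), this would force $43\mathcal F25 \subseteq 43\mathcal A23$, contradicting the proper inclusion $43\mathcal A23 \subset 43\mathcal F25$ in the paper's main theorem. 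You have pattern-matched on the proof of Lemma \ref{lema_1_identity} for the set $I$, where such collapses genuinely occur (e.g.\ for $(42D24)$, $(42F25)$, $(42G15)$); but the whole reason the paper separates $I$ from $J$ is that the $J$-identities are strictly weaker and only yield $0 \to (x \to x) \approx x \to x$.

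The remainder of your plan --- specialize a variable to $0$ and rewrite using Lemma \ref{properties_of_I20_MC} and the $\mathcal{I}_{2,0}$-identities --- is the right idea and matches what the paper actually does; e.g.\ for $(42B12)$ the paper sets $y := 0$ inside $a \to ((a \to 0) \to a)$ and rewrites to $0 \to (a \to a)$. But this equational treatment must be carried out for \emph{all six} identities (the paper does so, with $(42C35)$ requiring an extra intermediate step $0 \to (x \to x)' \approx x \to x$ before applying Lemma \ref{general_properties2} (\ref{031114_07})); none of them can be dispatched by your shortcut, so your proof as structured cannot be completed.
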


\begin{proof} Let $a \in A$.  Consider the following cases: 
\\
Case 1:  $\mathbf A \models (42B12)$.  
Then  
\noindent $a \to a $
$\overset{   \ref{general_properties_equiv} (\ref{LeftImplicationwithtilde}) 
}{=}  a \to (a' \to a) $
$\overset{  
}{=}  a \to ((a \to 0) \to a) $
$\overset{  (42B12) 
}{=}  a \to (a \to (0 \to a)) $
$\overset{   \ref{properties_of_I20_MC} (\ref{310516_01}) 
}{=}  a \to (0 \to (a \to a)) $
$\overset{   \ref{properties_of_I20_MC} (\ref{310516_01}) 
}{=}  0 \to (a \to (a \to a)) $
$\overset{   \ref{general_properties2} (\ref{031114_04}) 
}{=}  0 \to (a \to a) $.

\noindent Case 2:  $(x) = (42B35)$. 
Hence
\noindent $0 \to (a \to a) $
$\overset{   \ref{general_properties2} (\ref{071114_04}) and (\ref{311014_06}) 
}{=}  (0 \to a) \to (0 \to a) $
$\overset{   \ref{properties_of_I20_MC} (\ref{310516_01}) 
}{=}  0 \to ((0 \to a) \to a) $
$\overset{   \ref{general_properties2} (\ref{291014_10}) 
}{=}  0 \to ((a \to a) \to a) $
$\overset{   \ref{properties_of_I20_MC} (\ref{310516_01}) 
}{=}  (a \to a) \to (0 \to a) $
$\overset{  (42B35) 
}{=}  ((a \to a) \to 0) \to a $
$\overset{  
}{=}  (a \to a)' \to a $
$\overset{   \ref{properties_of_I20_MC} (\ref{310516_02}) 
}{=}  a' \to (a \to a) $
$\overset{   \ref{general_properties2} (\ref{281114_01}) 
}{=}  a \to a $.

\noindent Case 3:  $(x) = (42C23)$.  We get   
\noindent $a \to a $
$\overset{   \ref{general_properties_equiv} (\ref{TXX}) 
}{=} 0' \to (a \to a)  $
$\overset{  
}{=}  (0 \to 0) \to (a \to a) $
$\overset{  (42C23) 
}{=}  0 \to ((0 \to a) \to a) $
$\overset{   \ref{properties_of_I20_MC} (\ref{310516_01}) 
}{=}  (0 \to a) \to (0 \to a) $
$\overset{   \ref{general_properties2} (\ref{071114_04}) and (\ref{311014_06}) 
}{=}  0 \to (a \to a) $.

\noindent Case 4:  $(x) = (42C25)$.  One has  
\noindent $a \to a $
$\overset{   \ref{general_properties_equiv} (\ref{TXX}) 
}{=}  (0' \to a) \to a $
$\overset{  
}{=}  ((0 \to 0) \to a) \to a $
$\overset{  (42C25) 
}{=}  0 \to ((0 \to a) \to a) $
$\overset{   \ref{properties_of_I20_MC} (\ref{310516_01}) 
}{=}  (0 \to a) \to (0 \to a) $
$\overset{   \ref{general_properties2} (\ref{071114_04}) and (\ref{311014_06}) 
}{=}  0 \to (a \to a) $.

\noindent Case 5:  $(x) = (42C35)$.  Then  
\noindent $0 \to (a \to a)' $
$\overset{   \ref{properties_of_I20_MC} (\ref{310516_02}) 
}{=}  (a \to a) \to 0' $
$\overset{  
}{=}  (a \to a) \to (0 \to 0) $
$\overset{  (42C35) 
}{=}  ((a \to a) \to 0) \to 0 $
$\overset{ x \approx x''
}{=}  a \to a $.
Therefore,
\begin{equation} \label{071116_01}
\mathbf A \models 0 \to (x \to x)' \approx x \to x.
\end{equation}
Hence
\noindent $a \to a $
$\overset{  (\ref{071116_01}) 
}{=}  0 \to (a \to a)' $
$\overset{  (\ref{071116_01}) 
}{=}  0 \to (0 \to (a \to a)')' $
$\overset{   \ref{general_properties2} (\ref{031114_07}) 
}{=}  0 \to (a \to a)'' $
$\overset{ x \approx x'' 
}{=}  0 \to (a \to a) $.

\noindent Case 6:  $(x) = (42G25)$.  We have  
\noindent $0 \to (a \to a) $
$\overset{   \ref{general_properties2} (\ref{071114_04}) and (\ref{311014_06}) 
}{=}  (0 \to a) \to (0 \to a) $
$\overset{   \ref{properties_of_I20_MC} (\ref{310516_01}) 
}{=}  0 \to ((0 \to a) \to a) $
$\overset{   \ref{general_properties2} (\ref{291014_10}) 
}{=}  0 \to ((a \to a) \to a) $
$\overset{  (42G25) 
}{=}  ((0 \to a) \to a) \to a $
$\overset{   \ref{general_properties2} (\ref{291014_08})
}{=}  a \to a $,
which proves the lemma. 
\end{proof}

\begin{Theorem} \label{031116_Theo1B}
	If $\mathbf A \models 0 \to (x \to x) \approx x \to x$ then $\mathbf A \models (y)$, for all $(y) \in J$.
\end{Theorem}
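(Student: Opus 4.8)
The plan is to assume the hypothesis $\mathbf A \models 0 \to (x \to x) \approx x \to x$ throughout and to lean on its three consequences supplied by Lemma \ref{lemma_071116_A}: that $\mathbf A$ satisfies (\ref{081116_02}) $(x \to x) \to y' \approx ((x \to x) \to y)'$, (\ref{081116_03}) $(x \to x) \to (y \to z) \approx ((x \to x) \to y) \to z$, and (\ref{081116_04}) $(x \to y) \to (x \to y) \approx (x \to x) \to (y \to y)$. Combined with the commutation law Lemma \ref{properties_of_I20_MC}(\ref{310516_01}), $x \to (y \to z) \approx y \to (x \to z)$, and the contraposition law (\ref{310516_02}), these reduce each member of $J$ to an equality of normal forms. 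I would dispatch the six identities one at a time, isolating the two that require real work.

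Four of them are essentially immediate. The identity $(42C35)$ is exactly (\ref{081116_03}) with $z := y$, and $(42B35)$ is (\ref{081116_03}) with $z := x$. For $(42C23)$ I would write $x \to ((x \to y) \to y) \overset{(\ref{310516_01})}{=} (x \to y) \to (x \to y) \overset{(\ref{081116_04})}{=} (x \to x) \to (y \to y)$, and then $(42C25)$ follows at once by chaining $(42C23)$ with $(42C35)$.

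For $(42B12)$, namely $x \to (x \to (y \to x)) \approx x \to ((x \to y) \to x)$, I would collapse both sides to the common term $y \to (x \to x)$. The left side reduces by (\ref{031114_04}), $x \to w \approx x \to (x \to w)$, to $x \to (y \to x)$ and then by (\ref{310516_01}) to $y \to (x \to x)$. On the right, (\ref{291014_10}) turns $(x \to y) \to x$ into $(0 \to y) \to x$, so by (\ref{310516_01}) the right side becomes $(0 \to y) \to (x \to x)$; the only remaining point is $(0 \to y) \to (x \to x) \approx y \to (x \to x)$, which I obtain by rewriting $x \to x = 0 \to (x \to x)$ (the hypothesis), applying (\ref{311014_06}) to $(0 \to y) \to (0 \to (x \to x))$, and invoking the hypothesis once more.

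The genuinely hard case is $(42G25)$, $x \to ((y \to y) \to y) \approx ((x \to y) \to y) \to y$. Its left side reduces cleanly: by (\ref{310516_01}) it equals $(y \to y) \to (x \to y)$, and then by (\ref{081116_03}) it equals $((y \to y) \to x) \to y$. The obstacle is the right side $((x \to y) \to y) \to y$, whose leftmost factor is \emph{not} an idempotent, so (\ref{081116_03}) does not apply directly and the reversible laws (\ref{310516_01}), (\ref{310516_02}) merely cycle the expression back to itself. My plan is to break this loop by injecting a $0 \to$ term: rewrite the outer application via (\ref{291014_10}) or (\ref{291014_08}) to expose a factor of the form $0 \to (-)$, trade it against the idempotent $y \to y$ using the hypothesis together with (\ref{311014_06}), and then re-associate through that idempotent with (\ref{081116_02})–(\ref{081116_03}) so as to land back on $(y \to y) \to (x \to y)$ (equivalently, to show both sides equal $x \to y$ once one establishes $(y \to y) \to y \approx y$ under the hypothesis). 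This last chain is where the substantive computation lies, and it is the step I expect to be the crux; once it is in place, $(42G25)$ follows and the theorem is complete.
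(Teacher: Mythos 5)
Your treatment of five of the six identities in $J$ is correct. You obtain (42B35) and (42C35) as instances of Lemma \ref{lemma_071116_A}(\ref{081116_03}), and (42C23), (42C25) from Lemma \ref{lemma_071116_A}(\ref{081116_04}) together with Lemma \ref{properties_of_I20_MC}(\ref{310516_01}) -- exactly as the paper does. Your argument for (42B12) is in fact shorter than the paper's: collapsing both sides to the common term $y \to (x \to x)$ via Lemma \ref{general_properties2}(\ref{031114_04}), (\ref{291014_10}), (\ref{311014_06}), Lemma \ref{properties_of_I20_MC}(\ref{310516_01}) and two uses of the hypothesis checks out, and it replaces the paper's long computation through the defining identity (I).

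However, (42G25) is not proved: what you give is a plan, and you yourself flag its final chain as the crux you have not carried out. This gap is genuine, and both concrete suggestions you make for closing it fail. First, the fallback of establishing $(y \to y) \to y \approx y$ under the hypothesis is impossible: the four-element algebra displayed in the proof of Theorem \ref{theo_5_dist_var} (the one witnessing $43\mathcal F25 \not\subseteq 42\mathcal A12$, in which $1 \to 1 = 3$ and $3 \to 1 = 3$) satisfies $0 \to (x \to x) \approx x \to x$, yet $(1 \to 1) \to 1 = 3 \neq 1$. Second, the rewriting tools you name do not engage the right-hand side $((x \to y) \to y) \to y$: Lemma \ref{general_properties2}(\ref{291014_10}) requires the inner left factor to coincide with the outer right operand (here $x \to y$ versus $y$), and (\ref{291014_08}) requires an innermost factor literally of the form $0 \to s$ with $s$ the outer operand, so neither can ``expose a factor $0 \to (-)$'' in this term. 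The paper's proof of (42G25) instead applies the identity (I) twice -- first to the subterm $(x \to y) \to y$, which is precisely what injects the idempotent $y \to y$ into the expression -- combined with contraposition, Lemma \ref{properties_of_I20_MC}(\ref{310516_02}), Lemma \ref{lemma_071116_A}(\ref{081116_02}) and Lemma \ref{general_properties2}(\ref{031114_04}), arriving at $(y \to y) \to (x \to y) = x \to ((y \to y) \to y)$. Some appeal to (I), or to an equally strong consequence of it, appears unavoidable here, and it is entirely absent from your plan.
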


\begin{proof} Let $a,b \in A$.
By Lemma \ref{lemma_071116_A} (\ref{081116_02}) and (\ref{081116_03}),
\begin{equation} \label{071116_03}
\mathbf A \models (x \to x) \to y' \approx ((x \to x) \to y)'
\end{equation}
and
\begin{equation} \label{071116_04}
\mathbf A \models (x \to x) \to (y \to z) \approx ((x \to x) \to y) \to z
\end{equation}
Using (\ref{071116_04}) we have that $\mathbf A$ satisfies (42B35) and (42C35).

\noindent Since
\noindent $a \to ((a \to b) \to a) $
$\overset{   \ref{properties_of_I20_MC} (\ref{310516_01}) 
}{=}  (a \to b) \to (a \to a) $
$\overset{  (I) 
}{=}  (((a \to a)' \to a) \to (b \to (a \to a))')' $
$\overset{  
}{=}  ((((a \to a) \to 0) \to a) \to (b \to (a \to a))')' $
$\overset{  (\ref{071116_04}) 
}{=}  (((a \to a) \to (0 \to a)) \to (b \to (a \to a))')' $
$\overset{   \ref{properties_of_I20_MC} (\ref{310516_01}) 
}{=}  ((0 \to ((a \to a) \to a)) \to (b \to (a \to a))')' $
$\overset{   \ref{general_properties2} (\ref{071114_04}) and (\ref{311014_06}) 
}{=}  ((( 0 \to (a \to a)) \to (0 \to a)) \to (b \to (a \to a))')' $
$\overset{  hyp  
}{=}  (((0 \to a) \to (0 \to a)) \to (b \to (a \to a))')' $
$\overset{  (\ref{071116_03}) 
}{=}  ((( 0 \to a) \to (0 \to a)) \to (b \to (a \to a)))'' $
$\overset{ x \approx x''  
}{=}  ((0 \to a) \to (0 \to a)) \to (b \to (a \to a)) $
$\overset{   \ref{general_properties2} (\ref{071114_04}) and (\ref{311014_06}) 
}{=}  ( 0 \to (a \to a)) \to (b \to (a \to a)) $
$\overset{  hyp
}{=}  ( 0 \to a) \to (b \to (a \to a)) $
$\overset{   \ref{properties_of_I20_MC} (\ref{310516_01}) 
}{=}  b \to ((0 \to a) \to (a \to a)) $
$\overset{   \ref{properties_of_I20_MC} (\ref{310516_01}) 
}{=}  b \to (a \to ((0 \to a) \to a)) $
$\overset{   \ref{general_properties2} (\ref{291014_10}) 
}{=}  b \to (a \to ((a \to a) \to a)) $
$\overset{   \ref{properties_of_I20_MC} (\ref{310516_01}) 
}{=} b \to ((a \to a) \to (a \to a))  $
$\overset{   \ref{general_properties2} (\ref{250315_05}) 
}{=}  b \to (a \to a) $
$\overset{   \ref{general_properties2} (\ref{031114_04}) 
}{=}  b \to (a \to (a \to a)) $
$\overset{   \ref{properties_of_I20_MC} (\ref{310516_01}) 
}{=}  a \to (b \to (a \to a)) $
$\overset{   \ref{properties_of_I20_MC} (\ref{310516_01}) 
}{=}  a \to (a \to (b \to a)) $,
the algebra $\mathbf A$ satisfies (42B12).
By Lemma \ref{lemma_071116_A} (\ref{081116_04}) and Lemma \ref{properties_of_I20_MC} (\ref{310516_01}), $\mathbf A$ satisfies (42C23).
In view of (\ref{071116_04}) and (42C23), we have $\mathbf A \models (x \to x) \to (y \to y) \approx x \to ((x \to y) \to y)$ proving that the identity (42C25) holds in $\mathbf A$.

It remains to verify that the algebra $\mathbf A$ holds the identity (42G25). To finish off the proof,
\noindent $((a \to b) \to b) \to b $
$\overset{  (I) 
}{=}  ((b' \to a) \to (b \to b)')' \to b $
$\overset{   \ref{properties_of_I20_MC} (\ref{310516_02}) 
}{=}  b' \to ((b' \to a) \to (b \to b)')'' $
$\overset{  x'' \approx x 
}{=}  b' \to ((b' \to a) \to (b \to b)') $
$\overset{   \ref{properties_of_I20_MC} (\ref{310516_02}) 
}{=}  b' \to ((b \to b) \to (b' \to a)')  $
$\overset{   \ref{properties_of_I20_MC} (\ref{310516_01}) 
}{=}  (b \to b) \to (b' \to (b' \to a)') $
$\overset{   \ref{properties_of_I20_MC} (\ref{310516_02}) 
}{=}  (b \to b) \to ((b' \to a) \to b) $
$\overset{  (I) 
}{=}  (b \to b) \to ((b' \to b') \to (a \to b)')' $
$\overset{   \ref{properties_of_I20_MC} (\ref{310516_02}) 
}{=}  (b \to b) \to ((b \to b) \to (a \to b)')' $
$\overset{  (\ref{071116_03}) 
}{=}  (b \to b) \to ((b \to b) \to (a \to b))'' $
$\overset{  x'' \approx x 
}{=}  (b \to b) \to ((b \to b) \to (a \to b)) $
$\overset{   \ref{general_properties2} (\ref{031114_04}) 
}{=}  (b \to b) \to (a \to b) $
$\overset{   \ref{properties_of_I20_MC} (\ref{310516_01}) 
}{=}  a \to ((b \to b) \to b) $.
Consequently,
$\mathbf A \models (42G25).$
\end{proof}

In view of Lemma \ref{lema_1_identityB} and Theorem \ref{031116_Theo1B}, the following theorem is immediate.
\begin{Theorem} \label{theo_081116_01}
 $42\mathcal B12 =$ $42\mathcal B35 =$ $42\mathcal C23 =$ $42\mathcal C25=$ $42\mathcal C35 =$ $42\mathcal G25$.	
\end{Theorem}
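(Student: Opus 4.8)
The plan is to reduce the Theorem to the two preceding results, \textbf{Lemma \ref{lema_1_identityB}} and \textbf{Theorem \ref{031116_Theo1B}}, which together establish a logical equivalence between each defining identity in $J$ and the single identity $0 \to (x \to x) \approx x \to x$. Once that equivalence is in hand, the equality of all six varieties is immediate, since varieties defined relative to $\mathcal{S}$ by logically equivalent identities coincide.

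First I would recall the setup: the set $J$ consists of the identities $(42B12)$, $(42B35)$, $(42C23)$, $(42C25)$, $(42C35)$, and $(42G25)$, and the corresponding subvarieties of $\mathcal{S}$ are $42\mathcal B12$, $42\mathcal B35$, $42\mathcal C23$, $42\mathcal C25$, $42\mathcal C35$, and $42\mathcal G25$. For the forward direction, I would invoke \textbf{Lemma \ref{lema_1_identityB}}: for each $(x) \in J$, if $\mathbf A \models (x)$, then $\mathbf A \models 0 \to (x \to x) \approx x \to x$. Thus every one of the six varieties is contained in the subvariety $\mathcal{K}$ of $\mathcal{S}$ defined by $0 \to (x \to x) \approx x \to x$. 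For the reverse direction, I would invoke \textbf{Theorem \ref{031116_Theo1B}}: if $\mathbf A \models 0 \to (x \to x) \approx x \to x$, then $\mathbf A \models (y)$ for \emph{all} $(y) \in J$ simultaneously. Hence $\mathcal{K}$ is contained in each of the six varieties.

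Combining the two containments, each variety in the list equals $\mathcal{K}$, and therefore they are all equal to one another:
\[
42\mathcal B12 = 42\mathcal B35 = 42\mathcal C23 = 42\mathcal C25 = 42\mathcal C35 = 42\mathcal G25.
\]
Concretely, for any fixed $(x) \in J$ we have $x\mathcal{X} \subseteq \mathcal{K}$ by \textbf{Lemma \ref{lema_1_identityB}} and $\mathcal{K} \subseteq x\mathcal{X}$ by \textbf{Theorem \ref{031116_Theo1B}}, so $x\mathcal{X} = \mathcal{K}$; since this holds for every member of $J$, all six coincide.

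There is essentially no obstacle here, as the theorem is stated to be ``immediate'' precisely because the real work has already been carried out in the two cited results. The only point requiring minor care is the logical structure: \textbf{Lemma \ref{lema_1_identityB}} handles the individual implications ``$(x) \implies$ the common identity'' case by case, while \textbf{Theorem \ref{031116_Theo1B}} supplies the single converse ``common identity $\implies$ all of $J$ at once''. The asymmetry in how the two directions are packaged is the only thing to track; once both are stated, the chain of equalities follows with no further computation, and I would present the argument in a single short paragraph as done above.
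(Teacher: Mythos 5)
Your proof is correct and takes essentially the same approach as the paper: the paper's own proof simply states that the theorem is immediate from Lemma \ref{lema_1_identityB} and Theorem \ref{031116_Theo1B}, which is exactly the two-way containment argument (each variety is contained in the one defined by $0 \to (x \to x) \approx x \to x$, and conversely) that you have spelled out explicitly.
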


\begin{Lemma} \label{lemma_081116_06}
	$42 \mathcal A12 = 42 \mathcal D23 = 42 \mathcal D35$.
\end{Lemma}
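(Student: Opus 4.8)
The plan is to reduce each of the three defining identities, modulo $\mathcal S$, to a short normal form and then compare. Writing out the brackettings of the words $\mathrm A=\langle x,x,x,y\rangle$ and $\mathrm D=\langle x,y,x,x\rangle$, the identities are $x\to(x\to(x\to y))\approx x\to((x\to x)\to y)$ for (42A12), $x\to((y\to x)\to x)\approx(x\to y)\to(x\to x)$ for (42D23), and $(x\to y)\to(x\to x)\approx((x\to y)\to x)\to x$ for (42D35). First I would simplify (42A12): two applications of Lemma \ref{general_properties2}(\ref{031114_04}) collapse its left-hand side to $x\to y$, and Lemma \ref{properties_of_I20_MC}(\ref{310516_01}) rewrites its right-hand side as $(x\to x)\to(x\to y)$, so relative to $\mathcal S$ the identity (42A12) is equivalent to $(x\to x)\to(x\to y)\approx x\to y$; I will call this $(\mathrm P)$.

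Next I would treat the two $\mathrm D$-identities. Applying Lemma \ref{properties_of_I20_MC}(\ref{310516_01}) to the left-hand side of (42D23) turns it into $(y\to x)\to(x\to x)\approx(x\to y)\to(x\to x)$; call this $(\mathrm Q)$. For (42D35) I would use Lemma \ref{lemma_identities_S_onevariable}(\ref{311016_06}) to rewrite the right-hand side $((x\to y)\to x)\to x$ as $x\to((y\to x)\to x)$, and then Lemma \ref{properties_of_I20_MC}(\ref{310516_01}) to bring this to $(y\to x)\to(x\to x)$; hence (42D35) is also equivalent to $(\mathrm Q)$. Consequently $42\mathcal D23=42\mathcal D35$ is immediate. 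The underlying reason is that the bracketting-$2$ and bracketting-$5$ terms of $\mathrm D$ already coincide throughout $\mathcal S$ by Lemma \ref{lemma_identities_S_onevariable}(\ref{311016_06}), so the pairs $(2,3)$ and $(3,5)$ define the same subvariety.

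It then remains to prove $42\mathcal A12=42\mathcal D23$, i.e.\ that $(\mathrm P)$ and $(\mathrm Q)$ are equivalent relative to $\mathcal S$, which I expect to be the main obstacle. I would establish the two implications separately by bidirectional chains of rewritings, the recurring tools being the commuting law Lemma \ref{properties_of_I20_MC}(\ref{310516_01}), the prime-exchange law Lemma \ref{properties_of_I20_MC}(\ref{310516_02}), the collapses Lemma \ref{general_properties2}(\ref{031114_04}) and (\ref{281114_01}), the folding identities Lemma \ref{general_properties2}(\ref{291014_09}) and (\ref{291014_10}) (in particular the useful form $(x\to y)\to x\approx(0\to y)\to x$), together with the involution $x''\approx x$ and Lemma \ref{general_properties_equiv}. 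The delicate point is that $(\mathrm P)$ and $(\mathrm Q)$ cannot be reduced to a single common closed form: the two sides of $(\mathrm Q)$ evaluate to $x\to y$ in $\mathbf 2_s$ but to $x\to x$ in $\mathbf 2_b$, and both of these algebras satisfy $(\mathrm P)$, so neither side of $(\mathrm Q)$ is a fixed term, and the equivalence must be proved by genuinely transforming $(\mathrm P)$ into $(\mathrm Q)$ and back rather than by normalising both to a simpler law.

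Concretely, for $(\mathrm P)\Rightarrow(\mathrm Q)$ I would expand $(x\to y)\to(x\to x)$ and $(y\to x)\to(x\to x)$ using the identity (I), simplify the resulting nested terms by Lemma \ref{properties_of_I20_MC}(\ref{310516_01}) and (\ref{310516_02}) and $x''\approx x$ (noting that $(x\to x)'\to x\approx x\to x$ and $(x\to x)'\to y\approx x'\to(x\to y)$ emerge after these steps), and then insert $(\mathrm P)$ at the point where a subterm of the shape $(x\to x)\to(x\to(\cdot))$ is exposed, so that both sides collapse to a common expression. For the reverse direction $(\mathrm Q)\Rightarrow(\mathrm P)$ I would specialise $(\mathrm Q)$ and refold using Lemma \ref{general_properties_equiv}(\ref{LeftImplicationwithtilde}) and (\ref{reflexivity}) and Lemma \ref{general_properties2}(\ref{291014_10}). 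Finally, nontriviality and the lower bound $\mathcal{SL}\subseteq 42\mathcal A12$ are already supplied by Lemma \ref{including_relations_of_SL_42}, so the three varieties coincide.
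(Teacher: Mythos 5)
Your preliminary reductions are all correct: (42A12) is indeed equivalent modulo $\mathcal S$ to $(\mathrm P)$: $(x\to x)\to(x\to y)\approx x\to y$ (by Lemma \ref{general_properties2} (\ref{031114_04}) twice and Lemma \ref{properties_of_I20_MC} (\ref{310516_01})), both (42D23) and (42D35) are equivalent to $(\mathrm Q)$: $(y\to x)\to(x\to x)\approx(x\to y)\to(x\to x)$, and your proof that $42\mathcal D23 = 42\mathcal D35$ is exactly the paper's: the bracketting-2 and bracketting-5 terms of the word D coincide in all of $\mathcal S$ by Lemma \ref{lemma_identities_S_onevariable} (\ref{311016_06}). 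The observation about $\mathbf 2_s$ and $\mathbf 2_b$ is also correct as far as it goes.

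The genuine gap is that the equivalence $42\mathcal A12 = 42\mathcal D23$, i.e.\ $(\mathrm P)\Leftrightarrow(\mathrm Q)$ relative to $\mathcal S$, is the entire nontrivial content of the lemma, and you never prove it: you only describe a strategy (``expand by (I), simplify, insert $(\mathrm P)$ where a subterm of shape $(x\to x)\to(x\to(\cdot))$ is exposed, so that both sides collapse'') without exhibiting any chain of equations. In this equational setting such a claim is not checkable; the paper spends two long derivations on precisely this step, and your proposed toolkit is visibly insufficient for one of the two directions: for $(\mathrm Q)\Rightarrow(\mathrm P)$ you list only Lemma \ref{general_properties_equiv} (\ref{LeftImplicationwithtilde}), (\ref{reflexivity}) and Lemma \ref{general_properties2} (\ref{291014_10}), none of which relates a prefix $(x\to y)\to$ to $(y\to x)\to$; the paper's argument for this direction hinges on first extracting $0\to x\approx 0\to x'$ from the hypothesis (specialize at $x:=0$) and then on Lemma \ref{general_properties2} (\ref{250615_04}) and (\ref{250315_05}). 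A concrete way to make your plan into a proof is to note that, modulo $\mathcal S$, $(\mathrm Q)$ is equivalent to the single law $0\to x'\approx 0\to x$: indeed $(\mathrm Q})$ at $x:=0$ yields it, and conversely $(y\to x)\to(x\to x)\overset{(\ref{250615_04})}{=}(0\to y')\to(x\to x)=(0\to y)\to(x\to x)\overset{(\ref{291014_10}),(\ref{310516_01})}{=}(x\to y)\to(x\to x)$. It then suffices to derive $0\to x'\approx 0\to x$ from $(\mathrm P)$, e.g.: put $y:=0$ in $(\mathrm P)$ and use (\ref{250615_04}) to get $(0\to x')\to x'\approx x'$; prefix by $0\to$ and use (\ref{071114_04}) and (\ref{311014_06}) and (\ref{310516_02}) to get $0\to(x\to x)\approx 0\to x'$; finally substitute $x:=x'$ to also get $0\to(x\to x)\approx 0\to x$. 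Without a completed derivation of this kind (in both directions), the proposal does not establish the lemma.
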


\begin{proof}
	In view of Lemma \ref{lemma_identities_S_onevariable} (\ref{311016_06}), we have  

$$\mathbf A \models x \to ((y \to x) \to x) \approx ((x \to y) \to x) \to x.$$ Hence, 
$$\mathbf A \models (42D23) \mbox{  if and only if } \mathbf A \models (42D35), $$
proving that $\mathcal D23 = 42 \mathcal D35$.

\noindent	Assume $\mathbf A \in 42\mathcal A12$ and $a,b \in A$. Observe that 
\noindent $((a \to b) \to a) \to a $
$\overset{   \ref{general_properties2} (\ref{291014_09}) 
}{=}  (a \to (b \to a)')' \to a $
$\overset{   \ref{properties_of_I20_MC} (\ref{310516_01}) 
}{=}  a' \to (a \to (b \to a)') $
$\overset{   \ref{properties_of_I20_MC} (\ref{310516_02})  
}{=}  a' \to ((b \to a) \to a') $
$\overset{   \ref{properties_of_I20_MC} (\ref{310516_01})  
}{=}  (b \to a) \to (a' \to a') $
$\overset{   \ref{properties_of_I20_MC} (\ref{310516_02}) 
}{=}  (b \to a) \to (a \to a) $
$\overset{   \ref{properties_of_I20_MC} (\ref{310516_02}) 
}{=}  (a' \to b') \to (a \to a) $
$\overset{   \ref{general_properties2} (\ref{031114_04}) 
}{=}  (a' \to (a' \to b')) \to (a \to a) $
$\overset{   \ref{general_properties2} (\ref{031114_04}) 
}{=}  (a' \to (a' \to (a' \to b'))) \to (a \to a) $
$\overset{  (42A12) 
}{=}  (a' \to ((a' \to a') \to b)) \to (a \to a) $
$\overset{   \ref{properties_of_I20_MC} (\ref{310516_01}) 
}{=}  ((a' \to a') \to (a' \to b)) \to (a \to a) $
$\overset{   \ref{properties_of_I20_MC} (\ref{310516_02}) 
}{=}  ((a \to a) \to (a' \to b)) \to (a \to a) $
$\overset{   \ref{general_properties2} (\ref{291014_10}) 
}{=}  (0 \to (a' \to b)) \to (a \to a) $
$\overset{   \ref{properties_of_I20_MC} (\ref{310516_01}) 
}{=}  (a' \to (0 \to b)) \to (a \to a) $
$\overset{   \ref{properties_of_I20_MC} (\ref{310516_02}) 
}{=}  ((0 \to b)' \to a) \to (a \to a) $
$\overset{   \ref{general_properties2} (\ref{250615_04}) 
}{=}  (0 \to (0 \to b)'') \to (a \to a) $
$\overset{  x \approx x''  
}{=}  (0 \to (0 \to b)) \to (a \to a) $
$\overset{   \ref{general_properties2} (\ref{031114_04}) 
}{=}  (0 \to b) \to (a \to a) $
$\overset{   \ref{properties_of_I20_MC} (\ref{310516_02}) 
}{=}  (0 \to b) \to (a' \to a') $
$\overset{    \ref{general_properties2} (\ref{250615_04}) 
}{=}  (b' \to a') \to (a' \to a') $
$\overset{   \ref{properties_of_I20_MC} (\ref{310516_02}) 
}{=}  (a \to b) \to (a \to a) $.
Hence $\mathbf A \in 42\mathcal D35$.
Now assume that $\mathbf A \in 42\mathcal D35$ and $a,b \in A$. \\
Since
\noindent $0 \to a $
$\overset{  x \approx x'' 
}{=}  (0 \to a)'' $
$\overset{  
}{=}  ((0 \to a) \to 0) \to 0 $
$\overset{  (42D35) 
}{=}  (0 \to a) \to (0 \to 0) $
$\overset{  
}{=}  (0 \to a) \to 0' $
$\overset{   \ref{properties_of_I20_MC} (\ref{310516_02}) 
}{=}  0 \to (0 \to a)' $
$\overset{   \ref{general_properties2} (\ref{031114_07}) 
}{=}  0 \to a' $,
the algebra $\mathbf A$ satisfies
\begin{equation} \label{021116_04}
0 \to x \approx 0 \to x'.
\end{equation}
Therefore, 
\noindent $a \to ((a \to a) \to b) $
$\overset{   \ref{properties_of_I20_MC} (\ref{310516_01}) 
}{=}  (a \to a) \to (a \to b) $
$\overset{   \ref{general_properties2} (\ref{250615_04}) 
}{=}  (0 \to a') \to (a \to b) $
$\overset{   \ref{properties_of_I20_MC} (\ref{310516_01}) 
}{=}  a \to ((0 \to a') \to b) $
$\overset{  (\ref{021116_04}) 
}{=}  a \to ((0 \to a) \to b) $
$\overset{   \ref{general_properties2} (\ref{291014_10})  
}{=}  a \to ((b \to a) \to b) $
$\overset{   \ref{properties_of_I20_MC} (\ref{310516_01}) 
}{=}  (b \to a) \to (a \to b) $
$\overset{   \ref{general_properties2} (\ref{250315_05})  
}{=}  a \to b $
$\overset{   \ref{general_properties2} (\ref{031114_04}) 
}{=}  a \to (a \to b) $
$\overset{   \ref{general_properties2} (\ref{031114_04}) 
}{=}  a \to (a \to (a \to b)) $.
Hence $\mathbf A \in 42\mathcal A12$.
\end{proof}

Let $K$ be the set consisting of the following identities: $(42C12)$, $(42C13)$, $(42C15)$, $(42D12)$ and $(42F35)$.

\begin{Lemma} \label{lema_1_identityC}
	Let $(x) \in K$. If $\mathbf A \models (x)$ then $\mathbf A \models 0 \to x \approx x \to x$.
\end{Lemma}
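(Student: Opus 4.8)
The plan is to prove the lemma by treating each of the five identities in $K$ separately: in every case we assume $\mathbf A \in \mathcal S$ satisfies the identity in question and deduce $0 \to x \approx x \to x$. Two reductions will recur. First, whenever a side begins with $x \to (x \to \cdots)$ we collapse it using the law $x \to y \approx x \to (x \to y)$ of Lemma~\ref{general_properties2}(\ref{031114_04}); second, the premise-swap law $x \to (y \to z) \approx y \to (x \to z)$ of Lemma~\ref{properties_of_I20_MC}(\ref{310516_01}) will repeatedly be used to bring the two sides into a common shape. The target $0 \to x \approx x \to x$ is most conveniently reached through the intermediate form $0 \to x' \approx x \to x$: once this holds, replacing $x$ by $x'$ and using $x'' \approx x$ together with $x' \to x' \approx x \to x$ (an instance of Lemma~\ref{properties_of_I20_MC}(\ref{310516_02})) yields the conclusion.

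The two most direct cases are (42C12) and (42F35). For (42C12), namely $x \to (x \to (y \to y)) \approx x \to ((x \to y) \to y)$, the left side collapses to $x \to (y \to y)$ and the right side becomes $(x \to y) \to (x \to y)$ by Lemma~\ref{properties_of_I20_MC}(\ref{310516_01}); substituting $y := 0$ and rewriting $x \to 0'$ as $0 \to x'$ via Lemma~\ref{general_properties}(\ref{cuasiConmutativeOfImplic}) and $x' \to x'$ as $x \to x$ via Lemma~\ref{properties_of_I20_MC}(\ref{310516_02}) gives $0 \to x' \approx x \to x$, and we finish as above. For (42F35), namely $(x \to y) \to (y \to x) \approx ((x \to y) \to y) \to x$, the left side collapses to $y \to x$ by Lemma~\ref{general_properties2}(\ref{250315_05}); setting $y := 0$ then turns the right side into $((x \to 0) \to 0) \to x = x \to x$ (using $x'' \approx x$), so $0 \to x \approx x \to x$ is immediate.

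The cases (42C13) and (42C15) reduce to (42C12) after one preliminary step. Specializing $x := 0$ in either identity and simplifying yields $0 \to (y \to y) \approx y \to y$, which is exactly the hypothesis of Lemma~\ref{lemma_071116_A}. Invoking that lemma, its item (\ref{081116_03}) rewrites the right side $((x \to x) \to y) \to y$ of (42C15) as $(x \to x) \to (y \to y)$, and item (\ref{081116_04}) rewrites $(x \to x) \to (y \to y)$ (the right side of (42C13), and the form just obtained for (42C15)) as $(x \to y) \to (x \to y)$. In both cases the identity becomes $x \to (y \to y) \approx (x \to y) \to (x \to y)$, exactly as in (42C12), and the same substitution $y := 0$ completes the argument.

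The remaining case (42D12) is the main obstacle. After the standard simplifications via Lemma~\ref{properties_of_I20_MC}(\ref{310516_01}) and Lemma~\ref{general_properties2}(\ref{031114_04}) it becomes the two-variable law $y \to (x \to x) \approx (y \to x) \to (x \to x)$, whose right-hand factor $(y \to x) \to x$ does not collapse under any single substitution; indeed $y := 0$ yields only $0 \to (x \to x) \approx x \to x$ and $x := 0$ yields only $0 \to x' \approx 0 \to x$, so neither alone produces the conclusion. The plan here is to exploit the full two-variable strength. First use $y := 0$ to secure $0 \to (x \to x) \approx x \to x$, thereby activating Lemma~\ref{lemma_071116_A}, and $x := 0$ to secure $0 \to x' \approx 0 \to x$. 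Combining the latter with Lemma~\ref{general_properties2}(\ref{250615_04}) rewrites the right side as $(0 \to y) \to (x \to x)$, while Lemma~\ref{general_properties2}(\ref{250315_05}), specialised and combined with $0 \to x' \approx 0 \to x$, supplies the auxiliary law $(0 \to x) \to x \approx x$. Feeding these, together with the identities of Lemma~\ref{lemma_071116_A}, into a final extended equational chain is expected to deliver $0 \to x \approx x \to x$. I anticipate this last chain to be the technically heaviest part of the proof, since the identity is essentially closed under the obvious single-variable specializations, and only its genuine two-variable content, processed through Lemma~\ref{lemma_071116_A}, can break the symmetry between $0 \to x$ and $x \to x$.
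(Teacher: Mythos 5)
Your handling of (42C12), (42C13), (42C15) and (42F35) is correct and complete, and for (42C13) and (42C15) it takes a genuinely different route from the paper: the paper disposes of each of the five identities in $K$ by its own direct equational chain, whereas you first extract the hypothesis $0 \to (x \to x) \approx x \to x$ of Lemma~\ref{lemma_071116_A} via the specialization $x := 0$, and then use items (\ref{081116_03}) and (\ref{081116_04}) of that lemma to collapse both identities onto the reduced form $x \to (y \to y) \approx (x \to y) \to (x \to y)$ of (42C12). This makes one computation serve three cases, at the modest price of invoking Lemma~\ref{lemma_071116_A} (legitimate, since it precedes the present lemma). Your uniform exit through the intermediate identity $0 \to x' \approx x \to x$, followed by the substitution $x \mapsto x'$ and Lemma~\ref{properties_of_I20_MC}~(\ref{310516_02}), is also sound.

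The one genuine defect is that case (42D12) is never actually finished. You correctly reduce it to $y \to (x \to x) \approx (y \to x) \to (x \to x)$, correctly derive the two specializations $0 \to (x \to x) \approx x \to x$ (call it (b), from $y := 0$) and $0 \to x' \approx 0 \to x$ (call it (c), from $x := 0$), and your sketched derivation of the auxiliary law $(0 \to x) \to x \approx x$ from Lemma~\ref{general_properties2}~(\ref{250315_05}) and (c) is valid; but you end by saying a ``final extended equational chain is expected to deliver'' the conclusion. That chain \emph{is} the proof of this case, and it is missing. The gap is fortunately benign, because (b) and (c) alone already close it: $x \to x \approx 0 \to (x \to x)$ by (b); $\approx x \to (0 \to x)$ by Lemma~\ref{general_properties2}~(\ref{071114_04}); $\approx x \to (0 \to x')$ by (c); $\approx 0 \to x'$ by Lemma~\ref{general_properties2}~(\ref{281114_01}) with $y := 0$; $\approx 0 \to x$ by (c). So none of the heavier apparatus you assembled for this case --- the rewriting via Lemma~\ref{general_properties2}~(\ref{250615_04}), the auxiliary law $(0 \to x) \to x \approx x$, or Lemma~\ref{lemma_071116_A} --- is needed, and in particular your worry that only the ``genuine two-variable content'' processed through Lemma~\ref{lemma_071116_A} can finish the job is unfounded. (For comparison, the paper's own proof of this case is a single direct chain that derives (c) first and then applies the identity (42D12) once more inside the chain, never isolating (b).)
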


\begin{proof} Let $a \in A$.
	
If $\mathbf A \models (42C12)$,
\noindent $a \to a $
$\overset{   \ref{properties_of_I20_MC} (\ref{310516_02}) 
}{=}  a' \to a' $
$\overset{  x'' \approx x 
}{=}  a' \to a''' $
$\overset{  
}{=}  a' \to ((a' \to 0) \to 0) $
$\overset{  (42C12) 
}{=}  a' \to (a' \to (0 \to 0)) $
$\overset{  
}{=}  a' \to (a' \to 0') $
$\overset{   \ref{general_properties2} (\ref{031114_04}) 
}{=}  a' \to 0' $
$\overset{   \ref{general_properties} (\ref{cuasiConmutativeOfImplic2})
}{=}  0 \to a $.\\
If $\mathbf A \models (42C13)$,
\noindent $0 \to a $
$\overset{   \ref{properties_of_I20_MC} (\ref{310516_02}) 
}{=}  a' \to 0' $
$\overset{   \ref{general_properties2} (\ref{031114_04}) 
}{=}  a' \to (a' \to 0') $
$\overset{  
}{=}  a' \to (a' \to (0 \to 0)) $
$\overset{  (42C13) 
}{=}  (a' \to a') \to (0 \to 0) $
$\overset{  
}{=}  (a' \to a') \to 0' $
$\overset{   \ref{general_properties} (\ref{cuasiConmutativeOfImplic}) 
}{=}  0 \to (a' \to a')' $
$\overset{   \ref{general_properties2} (\ref{191114_05}) 
}{=}  0 \to (a'' \to a'') $
$\overset{  x'' \approx x 
}{=}  0 \to (a \to a) $
$\overset{   \ref{general_properties2} (\ref{031114_04}) 
}{=}  0 \to (0 \to (a \to a)) $
$\overset{  (42C13) 
}{=}  (0 \to 0) \to (a \to a) $
$\overset{  
}{=}  0' \to (a \to a) $
$\overset{   \ref{general_properties_equiv} (\ref{TXX}) 
}{=}  a \to a $.\\
If $\mathbf A \models (42C15)$,
\noindent $0 \to a $
$\overset{   \ref{properties_of_I20_MC} (\ref{310516_02}) 
}{=}  a' \to 0' $
$\overset{   \ref{general_properties2} (\ref{031114_04}) 
}{=}  a' \to (a' \to 0') $
$\overset{  
}{=}  a' \to (a' \to (0 \to 0)) $
$\overset{  (42C15) 
}{=}  ((a' \to a') \to 0) \to 0 $
$\overset{  x'' \approx x 
}{=}  a' \to a' $
$\overset{   \ref{properties_of_I20_MC} (\ref{310516_02}) 
}{=}  a \to a $.\\
If $\mathbf A \models (42D12)$,
\noindent $0 \to a $
$\overset{   \ref{properties_of_I20_MC} (\ref{310516_02}) 
}{=}  a' \to 0' $
$\overset{   \ref{general_properties_equiv} (\ref{LeftImplicationwithtilde}) 
}{=}  a' \to (0 \to 0') $
$\overset{  
}{=}  a' \to (0 \to (0 \to 0)) $
$\overset{   \ref{properties_of_I20_MC} (\ref{310516_01}) 
}{=}  0 \to (a' \to (0 \to 0)) $
$\overset{  (42D12) 
}{=}  0 \to ((a' \to 0) \to 0) $
$\overset{  x'' \approx x 
}{=}  0 \to (a \to 0) $
$\overset{  
}{=}  0 \to a' $.\\
Hence,
\begin{equation} \label{081116_01}
\mathbf A \models 0 \to x \approx 0 \to x'.
\end{equation}
Therefore,
\noindent $a \to a $
$\overset{   \ref{general_properties_equiv} (\ref{LeftImplicationwithtilde}) 
}{=}  a \to (a' \to a) $
$\overset{   \ref{general_properties_equiv} (\ref{LeftImplicationwithtilde}) 
}{=}  a \to ((a \to a') \to a) $
$\overset{   \ref{general_properties2} (\ref{291014_10}) 
}{=}  a \to ((0 \to a') \to a) $
$\overset{  (\ref{081116_01}) 
}{=}  a \to ((0 \to a) \to a) $
$\overset{  (42D12) 
}{=}  a \to (0 \to (a \to a)) $
$\overset{   \ref{properties_of_I20_MC} (\ref{310516_01}) 
}{=}  0 \to (a \to (a \to a)) $
$\overset{   \ref{general_properties2} (\ref{031114_04}) 
}{=}  0 \to (a \to a) $
$\overset{   \ref{general_properties2} (\ref{071114_04}) and (\ref{311014_06}) 
}{=}  (0 \to a) \to (0 \to a) $
$\overset{  (\ref{081116_01}) 
}{=}  (0 \to a') \to (0 \to a) $
$\overset{   \ref{general_properties2} (\ref{071114_04}) and (\ref{311014_06}) 
}{=}  0 \to (a' \to a) $
$\overset{  
}{=}  0 \to a. $\\
If $\mathbf A \models (42F35)$,
\noindent $0 \to a $
$\overset{   \ref{general_properties2} (\ref{281114_01}) 
}{=}  a' \to (0 \to a) $
$\overset{  
}{=}  (a \to 0) \to (0 \to a) $
$\overset{  (42F35) 
}{=}  [(a \to 0) \to 0] \to a $
$\overset{  
}{=}  a'' \to a $
$\overset{  x'' \approx x.
}{=}  a \to a $.
\end{proof}

\begin{Theorem} \label{081116_Theo1C} 
	If $\mathbf A \models 0 \to x \approx x \to x$ then $\mathbf A \models (y)$ for all $(y) \in K$.
\end{Theorem}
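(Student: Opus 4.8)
The plan is to read the five members of $K$ off the bracketing tables. With $C=\langle x,x,y,y\rangle$, $D=\langle x,y,x,x\rangle$, $F=\langle x,y,y,x\rangle$, these are (42C12) $x\to(x\to(y\to y))\approx x\to((x\to y)\to y)$, (42C13) $x\to(x\to(y\to y))\approx(x\to x)\to(y\to y)$, (42C15) $x\to(x\to(y\to y))\approx((x\to x)\to y)\to y$, (42D12) $x\to(y\to(x\to x))\approx x\to((y\to x)\to x)$, and (42F35) $(x\to y)\to(y\to x)\approx((x\to y)\to y)\to x$. First I would assemble the toolkit furnished by the hypothesis: by Lemma \ref{lemma_081116_A} it yields $0\to(x\to x)\approx x\to x$ (item (\ref{231116_03})) and $0\to x'\approx 0\to x$ (item (\ref{081116_06})); the first is exactly the premise of Lemma \ref{lemma_071116_A}, so I may use (B1) $(x\to x)\to y'\approx((x\to x)\to y)'$, (B2) $(x\to x)\to(y\to z)\approx((x\to x)\to y)\to z$ and (B3) $(x\to y)\to(x\to y)\approx(x\to x)\to(y\to y)$ (items (\ref{081116_02})--(\ref{081116_04})). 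I would also record once the shared reduction $(x\to x)\to(y\to y)\approx x\to(y\to y)$: rewrite both factors by the hypothesis as $(0\to x)\to(0\to y)$, collapse the first factor with Lemma \ref{general_properties2}(\ref{311014_06}), and undo the hypothesis on the surviving $0\to y$.

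For the three $C$--identities and (42D12) this shared reduction does almost all the work. In each case the left side collapses by Lemma \ref{general_properties2}(\ref{031114_04}) — together with the commuting law Lemma \ref{properties_of_I20_MC}(\ref{310516_01}) in the case of $D$ — to a single canonical term ($x\to(y\to y)$ for the $C$'s, $y\to(x\to x)$ for $D$), while the right side is brought to $(x\to x)\to(y\to y)$: via Lemma \ref{properties_of_I20_MC}(\ref{310516_01}) and then (B3) for (42C12), via (B2) read backwards for (42C15), and directly for (42C13); the shared reduction then closes each chain. For (42D12) the right side $x\to((y\to x)\to x)$ becomes $(y\to x)\to(x\to x)$ by Lemma \ref{properties_of_I20_MC}(\ref{310516_01}), which I convert to $(0\to y')\to(x\to x)$ by Lemma \ref{general_properties2}(\ref{250615_04}), then to $(0\to y)\to(0\to x)$ using $0\to y'\approx 0\to y$ and the hypothesis, and finally to $y\to(x\to x)$ by Lemma \ref{general_properties2}(\ref{311014_06}) and the hypothesis. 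These are short equational runs.

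The genuinely hard case, and the step I expect to be the main obstacle, is (42F35). Here I would not simplify $(x\to y)\to y$ directly; instead I start from the right side $((x\to y)\to y)\to x$ and apply the defining identity (I) with the split $(P\to Q)\to R$, $P=x\to y$, $Q=y$, $R=x$, obtaining $\bigl((x'\to(x\to y))\to(y\to x)'\bigr)'$. The crux is to show that the inner term $x'\to(x\to y)$ equals the idempotent term $(x\to y)\to(x\to y)$: shuffling with Lemma \ref{properties_of_I20_MC}(\ref{310516_01}) and (\ref{310516_02}) rewrites it as $y'\to(x\to x)$, the hypothesis turns $x\to x$ into $0\to x$ and moves the $0$ outward to give $0\to(y'\to x)=(y'\to x)\to(y'\to x)$, and (B3) together with $y'\to y'\approx y\to y$ (a consequence of Lemma \ref{properties_of_I20_MC}(\ref{310516_02})) and equation (\ref{071116_06}) identifies this with $(x\to y)\to(x\to y)$. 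Since this inner term is of the form $u\to u$, (B1) applies and strips the outer primes: $\bigl((x\to y)\to(x\to y)\bigr)\to(y\to x)'$ becomes $\bigl(((x\to y)\to(x\to y))\to(y\to x)\bigr)'$, so the whole expression collapses, via $z''\approx z$, to $((x\to y)\to(x\to y))\to(y\to x)$. Finally I rewrite $(x\to y)\to(x\to y)$ as $0\to(x\to y)$ by the hypothesis and apply Lemma \ref{general_properties2}(\ref{250615_04}) to reach $((x\to y)'\to y)\to(y\to x)$, whose first factor simplifies to $x\to y$ by Lemma \ref{general_properties2}(\ref{250615_01}); this lands exactly on the left side $(x\to y)\to(y\to x)$ of (42F35). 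The delicate points are choosing the representation $(x\to y)\to(x\to y)$ (rather than $(y'\to x)\to(y'\to x)$) for the inner term so that the concluding (\ref{250615_04})/(\ref{250615_01}) step is clean, and keeping the involution bookkeeping straight through the (I)/(B1) manipulation.
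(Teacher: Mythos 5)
Your proposal is correct; all five equational chains check out against the cited lemmas, and the prerequisites are in order (the hypothesis gives $0\to(x\to x)\approx x\to x$ via Lemma \ref{lemma_081116_A} (\ref{231116_03}), which licenses Lemma \ref{lemma_071116_A}). Your handling of the three $C$-identities is in substance the paper's own: the paper likewise funnels everything through $(x\to x)\to(y\to y)$ using items (\ref{081116_03}) and (\ref{081116_04}) of Lemma \ref{lemma_071116_A} and the exchange law, and its proof of (42C13) is exactly your ``shared reduction'' combined with Lemma \ref{general_properties2} (\ref{031114_04}). Where you genuinely diverge is on (42D12) and the second half of (42F35). For (42D12) the paper first imports (42B12) via Theorem \ref{theo_081116_01} (hence via the long computations behind that theorem) and then applies the defining identity (I) twice; your argument bypasses both, reducing each side to $y\to(x\to x)$ with a single use of Lemma \ref{general_properties2} (\ref{250615_04}) together with $0\to x'\approx 0\to x$, which is considerably shorter and self-contained. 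For (42F35) both proofs open with the identical application of (I) and the same shuffle down to $0\to(y'\to x)$, but the paper then grinds the whole expression to $y\to x$ (via Lemma \ref{general_properties2} (\ref{291014_10}) and Lemma \ref{general_properties_equiv}) and re-expands to the left side by Lemma \ref{general_properties2} (\ref{250315_05}), whereas you keep the idempotent factor $(x\to y)\to(x\to y)$, strip the primes with Lemma \ref{lemma_071116_A} (\ref{081116_02}), and land directly on the left side via (\ref{250615_04}) and (\ref{250615_01}). Both routes are sound; yours buys brevity and independence from Theorem \ref{theo_081116_01}, at the modest cost of citing the displayed equation (\ref{071116_06}) from inside the proof of Lemma \ref{lemma_071116_A} --- legitimate here since its hypothesis is in force, and avoidable anyway: $(y'\to x)\to(y'\to x)=(x'\to y)\to(x'\to y)$ by Lemma \ref{properties_of_I20_MC} (\ref{310516_02}), and then two applications of item (\ref{081116_04}) give $(x\to y)\to(x\to y)$.
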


\begin{proof} Let $a,b \in A$. 
 By the hypothesis and Lemma \ref{lemma_081116_A} (\ref{231116_03}) 
$\mathbf A$ satisfies
\begin{equation} \label{condition_0implximplx_ximplx}
0 \to (x \to x) \approx x \to x.
\end{equation}
Hence, by Lemma \ref{lemma_071116_A} (\ref{081116_04}) and Lemma \ref{properties_of_I20_MC} (\ref{310516_01}), the algebra $\mathbf A$ satisfies (42C23).



In view of Theorem \ref{theo_081116_01}, the algebra $\mathbf A$ satisfies the identity (42B12) too.\\
Since
\noindent $(a \to a) \to (b \to b) $
$\overset{  hyp 
}{=}  (0 \to a) \to (0 \to b) $
$\overset{   \ref{general_properties2} (\ref{071114_04}) and (\ref{311014_06}) 
}{=}  0 \to (a \to b) $
$\overset{   \ref{properties_of_I20_MC} (\ref{310516_01}) 
}{=}  a \to (0 \to b) $
$\overset{  hyp 
}{=}  a \to (b \to b) $
$\overset{   \ref{general_properties2} (\ref{031114_04})
}{=}  a \to (a \to (b \to b)) $,
the algebra $\mathbf A$ satisfies (42C13). 
Therefore,
\noindent $a \to ((a \to b) \to b)$
$\overset{  (42C23) 
}{=}  (a \to a) \to (b \to b) $
$\overset{  (42C13) 
}{=}  a \to (a \to (b \to b))  $.
Consequently, $\mathbf A \models (42C12)$. 
By Lemma \ref{lemma_071116_A} (\ref{081116_03}),
\begin{equation} \label{231116_05}
\mathbf A \models (x \to x) \to (y \to z) \approx ((x \to x) \to y) \to z.
\end{equation}	
	Therefore,
\noindent $a \to (a \to (b \to b)) $
$\overset{  (42C12) 
}{=}  a \to ((a \to b) \to b) $
$\overset{   \ref{properties_of_I20_MC} (\ref{310516_01}) 
}{=}  (a \to b) \to (a \to b) $
$\overset{   \ref{lemma_071116_A} (\ref{081116_04}) and (\ref{condition_0implximplx_ximplx}) 
}{=}  (a \to a) \to (b \to b) $
$\overset{  (\ref{231116_05}) 
}{=}  ((a \to a) \to b) \to b $	
proving (42C15).
By Lemma \ref{lemma_081116_A} (\ref{081116_06}) $\mathbf A$ satisfies
\begin{equation} \label{condition_0implxC_0implx}
0 \to x' \approx 0 \to x . 
\end{equation}

 Since
\noindent $a \to ((b \to a) \to a) $
$\overset{   \ref{properties_of_I20_MC} (\ref{310516_01}) 
}{=}  (b \to a) \to (a \to a)  $
$\overset{ hyp 
}{=}  (b \to a) \to (0 \to a) $
$\overset{  (I) 
}{=}  (((0 \to a)' \to b) \to (a \to (0 \to a))')' $
$\overset{   \ref{properties_of_I20_MC} (\ref{310516_01}) 
}{=}  (((0 \to a)' \to b) \to (0 \to (a \to a))')' $
$\overset{  (\ref{condition_0implximplx_ximplx}) 
}{=}  (((0 \to a)' \to b) \to (a \to a)')' $
$\overset{ hyp 
}{=}  (((0 \to a)' \to b) \to (0 \to a)')' $
$\overset{   \ref{general_properties_equiv} (\ref{TXX}) 
}{=}  (((0 \to a)' \to b) \to (0' \to (0 \to a))')' $
$\overset{  (I) 
}{=}  (b \to 0') \to (0 \to a) $
$\overset{   \ref{general_properties} (\ref{cuasiConmutativeOfImplic}) 
}{=}  (0 \to b') \to (0 \to a) $
$\overset{  (\ref{condition_0implxC_0implx}) 
}{=}  (0 \to b) \to (0 \to a) $
$\overset{ hyp 
}{=}  (0 \to b) \to (a \to a) $
$\overset{   \ref{properties_of_I20_MC} (\ref{310516_01}) 
}{=}  a \to ((0 \to b) \to a) $
$\overset{   \ref{general_properties2} (\ref{291014_10}) 
}{=}  a \to ((a \to b) \to a) $
$\overset{  (42B12) 
}{=}  a \to (a \to (b \to a)) $
$\overset{   \ref{properties_of_I20_MC} (\ref{310516_01}) 
}{=}  a \to (b \to (a \to a)) $,
the identity (42D12) holds in  $\mathbf A$.
Finally, $\mathbf A \models (42F35)$, in view of the following calculus:
\noindent $((a \to b) \to b) \to a $
$\overset{  (I) 
}{=}  ((a' \to (a \to b)) \to (b \to a)')' $
$\overset{   \ref{properties_of_I20_MC} (\ref{310516_01}) 
}{=}  ((a \to (a' \to b)) \to (b \to a)')' $
$\overset{   \ref{properties_of_I20_MC} (\ref{310516_02}) 
}{=}  ((a \to (b' \to a)) \to (b \to a)')' $
$\overset{   \ref{properties_of_I20_MC} (\ref{310516_01}) 
}{=}  ((b' \to (a \to a)) \to (b \to a)')' $
$\overset{ hyp 
}{=}  ((b' \to (0 \to a)) \to (b \to a)')' $
$\overset{   \ref{properties_of_I20_MC} (\ref{310516_01}) 
}{=}  ((0 \to (b' \to a)) \to (b \to a)')' $
$\overset{   \ref{general_properties2} (\ref{071114_04}) and (\ref{311014_06}) 
}{=}  (((0 \to b') \to (0 \to a)) \to (b \to a)')' $
$\overset{  (\ref{condition_0implxC_0implx}) 
}{=}  (((0 \to b) \to (0 \to a)) \to (b \to a)')' $
$\overset{   \ref{general_properties2} (\ref{071114_04}) and (\ref{311014_06}) 
}{=}  ((0 \to (b \to a)) \to (b \to a)')' $
$\overset{   \ref{general_properties2} (\ref{291014_10}) 
}{=}  (((b \to a)' \to (b \to a)) \to (b \to a)')' $
$\overset{   \ref{general_properties_equiv} (\ref{LeftImplicationwithtilde}) 
}{=}  ((b \to a) \to (b \to a)')' $
$\overset{   \ref{general_properties_equiv} (\ref{LeftImplicationwithtilde}) 
}{=}  (b \to a)'' $
$\overset{  x'' \approx x 
}{=}  b \to a $
$\overset{   \ref{general_properties2} (\ref{250315_05}) 
}{=}  (a \to b) \to (b \to a) $,
completing the proof.
\end{proof}



\begin{Theorem} \label{theo_081116_02}
	 $42\mathcal C12 =$ $42\mathcal C13 =$ $42\mathcal C15 =$ $42\mathcal D12=$ $42\mathcal F35$.
\end{Theorem}

\begin{proof}
If $\mathbf A \in 42\mathcal C12$ then, by Lemma \ref{lema_1_identityC}, $\mathbf A \models 0 \to x \approx x \to x$. Hence, using Theorem \ref{081116_Theo1C}, $\mathbf A \models (y)$ for all $(y) \in K$. Consequently $\mathbf A \models (42C13)$, proving $\mathbf A \in 42\mathcal C12 \subseteq\mathbf A \in 42\mathcal C13$. Similar arguments can be used for the remaining inclusions.  
\end{proof}

\section{Weak associative laws with length 4 and with 3 variables.} \label{section_length4_3variables}

Possible words of length 4 with 3 variables are: 
\medskip  

\begin{minipage}{0.8 \textwidth}
\begin{multicols}{2}
A: $\langle x, x, y, z\rangle$, 

B: $\langle x, y, x, z\rangle$, 

C: $\langle x, y, y, z\rangle$, 

D: $\langle x, y, z, x\rangle$, 

E: $\langle x, y, z, y\rangle$, 

F: $\langle x, y, z, z\rangle$. 
\end{multicols}
\end{minipage}

\medskip \noindent Ways in which a word of size 4 can be bracketed:

\begin{minipage}{0.8 \textwidth}
	\begin{multicols}{2}
1: $a \to (a \to (a \to a))$, 

2: $a \to ((a \to a) \to a)$, 

3: $(a \to a) \to (a \to a)$, 

4: $(a \to (a \to a)) \to a$, 

5: $((a \to a) \to a) \to a$. 
\end{multicols}
\end{minipage}

\medskip
Note that these identities are precisely the identities of Bol-Moufang type which were, as mentioned in the Introduction, analysed in 
\cite{cornejo2016BolMoufang}  with a slightly different notation wherein the first two digits were not used; for example,  $43\mathcal{A} 23$ was denoted by $\mathcal{A}_{23}$, etc.

%
	
\begin{Theorem} {\bf \cite{cornejo2016BolMoufang}} \label{theo_4_dist_var}
	There are 4 nontrivial varieties of Bol-Moufang type that are distinct from each other:
	$\mathcal{SL}$, $43\mathcal A12$,   $43\mathcal A23$ and  $43\mathcal F25$; and they satisfy the following inclusions: 
	\begin{enumerate}
		\item $\mathcal{SL} \subset 43\mathcal A23 \subset 43\mathcal F25$, 
		\item $\mathcal{SL} \subset 43\mathcal A12$,
		\item $\mathcal{BA} \subset 43\mathcal A12 \subset 43\mathcal F25$, 
		\item $43\mathcal F25 \subset \mathcal{S}$,
		\item $\mathcal{SL} = 43\mathcal A23 \cap 43\mathcal A12$.  
	\end{enumerate}	
\end{Theorem}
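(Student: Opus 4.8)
The plan is to classify all $60$ Bol-Moufang varieties $43\mathcal X pq$ (for $\mathcal X \in \{A,B,C,D,E,F\}$ and $1 \le p < q \le 5$) by first merging those that coincide, then isolating the genuinely distinct survivors, and finally fixing their mutual position. The principal rewriting engine is Lemma \ref{properties_of_I20_MC} (\ref{310516_01}), the swap law $x \to (y \to z) \approx y \to (x \to z)$, together with Lemma \ref{properties_of_I20_MC} (\ref{310516_02}) and the battery of identities in Lemma \ref{general_properties2}. These let me rewrite each of the five nested brackettings of a given word into a small number of canonical forms, so that large blocks among the $60$ identities become literally the same identity relative to $\mathcal S$. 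I would organize the argument as an explicit partition of the $60$ into equivalence classes under ``defines the same variety'', recording one representative per class.

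First I would dispatch the two extreme behaviours. Many of the identities, after substituting $0$ for one or more variables and simplifying with the tools above, collapse to $x \to x \approx x$; by Lemma \ref{310516_09} this forces $x' \approx x$, and since $\mathbf A \in \mathcal S \subseteq \mathcal{MC}$ satisfies $x' \approx x$ (hence lies in $\mathcal I_{1,0}$), Lemma \ref{lemma_SL_I10_C} places $\mathbf A$ in $\mathcal{SL}$. Conversely $\mathbf{2_s}$ satisfies each such identity, and $\mathcal V(\mathbf{2_s}) = \mathcal{SL}$, so those varieties equal $\mathcal{SL}$ exactly. A second block consists of identities that are already theorems of $\mathcal S$, i.e. whose two sides are provably equal from the axioms; the corresponding varieties are all of $\mathcal S$ (the ``trivial'' constraints), and these I would verify directly with the Lemma \ref{general_properties2}/\ref{properties_of_I20_MC} toolkit. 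After this sorting, the only candidates left for distinct nontrivial proper subvarieties are $43\mathcal A12$, $43\mathcal A23$ and $43\mathcal F25$, alongside $\mathcal{SL}$.

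Next I would establish the inclusions. For item (1), $\mathcal{SL} \subseteq 43\mathcal A23$ follows by checking $\mathbf{2_s} \models {(43A23)}$, and $43\mathcal A23 \subseteq 43\mathcal F25$ by deriving the defining identity of $43\mathcal F25$ from that of $43\mathcal A23$ inside $\mathcal S$ (a purely equational manipulation using the swap law and Lemma \ref{general_properties2}). Item (2) reduces to $\mathbf{2_s} \models {(43A12)}$. Item (3) needs $\mathbf{2_b} \models {(43A12)}$, giving $\mathcal{BA} = \mathcal V(\mathbf{2_b}) \subseteq 43\mathcal A12$, together with the derivation $43\mathcal A12 \subseteq 43\mathcal F25$. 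Item (4), $43\mathcal F25 \subsetneq \mathcal S$, requires exhibiting a symmetric $\mathcal I$-zroupoid violating $(43F25)$. Item (5), $\mathcal{SL} = 43\mathcal A23 \cap 43\mathcal A12$, is proved by assuming both defining identities, deriving $x \to x \approx x$, and again invoking Lemma \ref{310516_09} for the hard inclusion; the reverse inclusion is of the type already recorded in Lemma \ref{Lemma_SL_subsetFLEX_RALT_LALT}. Nontriviality of each of the four varieties is immediate, since each contains $\mathbf{2_s}$ or $\mathbf{2_b}$.

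The main obstacle is twofold. The bookkeeping needed to guarantee that all $60$ identities are correctly merged, with no two nominally different varieties wrongly identified and none wrongly separated, is error-prone and must be carried out uniformly. More seriously, the strictness of each inclusion and the pairwise distinctness of $43\mathcal A12$, $43\mathcal A23$, $43\mathcal F25$, $\mathcal{SL}$ and $\mathcal{BA}$ are non-implications: no equational calculation can establish them, so each must be witnessed by an explicit finite algebra lying in one variety but not another. Producing these separating models (in practice with Mace~4, as the authors note) --- in particular an algebra showing that $43\mathcal A12$ and $43\mathcal A23$ are incomparable and that both sit strictly below $43\mathcal F25$ --- is where the real effort concentrates.
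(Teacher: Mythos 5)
This theorem is not proved in the present paper at all: it is quoted verbatim from the companion paper \cite{cornejo2016BolMoufang}, so there is no in-paper proof to compare your attempt against. That said, your outline is a faithful reconstruction of the method that reference uses, and that this paper itself uses for the remaining 95 identities: merge the 60 Bol-Moufang varieties into blocks by purely equational rewriting (the swap law of Lemma \ref{properties_of_I20_MC}, contrapositive, and the identities of Lemma \ref{general_properties2}); collapse one block to $\mathcal{SL}$ by deriving $x \to x \approx x$, invoking Lemma \ref{310516_09} to get $x' \approx x$, and then Lemma \ref{lemma_SL_I10_C}, with the converse inclusion via $\mathcal{V}(\mathbf{2_s}) = \mathcal{SL}$; collapse another block to $\mathcal{S}$ by showing the identities are theorems of $\mathcal{S}$; and settle strictness and incomparability by finite separating models. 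Your treatment of the individual items is also consistent with what the present paper records elsewhere: the inclusions $43\mathcal A23 \subseteq 43\mathcal F25$ and $43\mathcal A12 \subseteq 43\mathcal F25$ are indeed equationally derivable (via the characterizations $0 \to x \approx x$, $0 \to x \approx x \to x$, and $0 \to (x \to x) \approx x \to x$ appearing in Lemmas \ref{lemma_111116_02}, \ref{lema_1_identityC} and \ref{lema_1_identityB}), the intersection argument for item (5) is exactly the $x \to x \approx x$ plus Lemma \ref{310516_09} route, and the separating algebras you say must be produced are precisely the ones exhibited in the proof of Theorem \ref{theo_5_dist_var} ($\mathbf{2_b}$ and the three- and four-element algebras). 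The only caveat is that yours is a strategy rather than a completed proof: the 60-identity bookkeeping, the explicit equational derivations, and the concrete separating models are deferred, and as you correctly note, that is where all the actual work lies.
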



%



\begin{Lemma} \label{Ola_Lemma_10.3(5)}
$43\mathcal F25 =  42\mathcal B35$.
\end{Lemma}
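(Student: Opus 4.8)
The plan is to show that \emph{both} varieties coincide with the subvariety of $\mathcal S$ determined by the single identity $0 \to (x \to x) \approx x \to x$, which I will write as $(H)$. For $42\mathcal B35$ this characterization is already in hand: since $(42B35)\in J$, Lemma \ref{lema_1_identityB} gives $42\mathcal B35 \models (H)$, and Theorem \ref{031116_Theo1B} (applied to $(42B35)\in J$) gives the converse, so $\mathbf A \in 42\mathcal B35$ if and only if $\mathbf A \models (H)$. It therefore suffices to establish the two implications $(43F25) \Rightarrow (H)$ and $(H) \Rightarrow (43F25)$, where $(43F25)$ is the identity $x \to ((y\to z)\to z) \approx ((x\to y)\to z)\to z$.

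For the first implication I would substitute $x := 0$ and $y := z := w$ in $(43F25)$. The right-hand side becomes $((0\to w)\to w)\to w$, which collapses to $w \to w$ by Lemma \ref{general_properties2} (\ref{291014_08}). The left-hand side becomes $0 \to ((w\to w)\to w)$; rewriting $(w\to w)\to w$ as $(0\to w)\to w$ via Lemma \ref{general_properties2} (\ref{291014_10}), and then applying Lemma \ref{general_properties2} (\ref{071114_04}) and (\ref{311014_06}), one simplifies $0 \to ((0\to w)\to w)$ successively to $(0\to w)\to(0\to w)$, then to $w \to (0\to w)$, and finally to $0 \to (w\to w)$. Equating the two sides yields exactly $(H)$. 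Combining this with the characterization of $42\mathcal B35$ above gives $43\mathcal F25 \subseteq 42\mathcal B35$.

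For the converse implication $(H)\Rightarrow(43F25)$ I would work inside $\mathcal S$ together with the consequences of $(H)$ furnished by Lemma \ref{lemma_071116_A}, namely $(x\to x)\to y' \approx ((x\to x)\to y)'$, $(x\to x)\to(y\to z)\approx((x\to x)\to y)\to z$, and $(x\to y)\to(x\to y)\approx(x\to x)\to(y\to y)$. First I would rewrite the left-hand side of $(43F25)$ by Lemma \ref{properties_of_I20_MC} (\ref{310516_01}) as $(y\to z)\to(x\to z)$, so that the goal reduces to $(y\to z)\to(x\to z)\approx((x\to y)\to z)\to z$. Then I would expand the right-hand side using the defining identity (I) (together with $x''\approx x$ and (\ref{310516_01})) to split off the outer and inner implications by $z$, and use the three consequences of $(H)$ above to absorb the factors of the form $z\to z$ and $(z\to z)'$ that are produced, until the rewritten left-hand side is reached.

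The main obstacle is this last equational chain. Unlike the situation in Lemma \ref{lemma_5terms}, here $(H)$ is strictly weaker than $0\to x\approx x$, so one cannot split $(u\to v)'$ as $u'\to v'$, and the $z\to z$ terms generated by (I) must be handled through the more delicate identities (\ref{081116_02})--(\ref{081116_04}) rather than by a clean De Morgan law. I expect the derivation to be a long but routine computation in the style of the proof of Theorem \ref{031116_Theo1B}, and I would organize it so that each application of (I) is immediately followed by an application of one of the $(H)$-consequences, keeping the intermediate terms in a controlled form. Once both implications are in place, the equality $43\mathcal F25 = 42\mathcal B35$ follows at once from the characterization of $42\mathcal B35$ by $(H)$.
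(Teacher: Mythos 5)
Your reduction of the problem is sound and, in fact, mirrors the paper's own strategy: the characterization of $42\mathcal B35$ as the subvariety of $\mathcal S$ defined by $(H)$: $0 \to (x \to x) \approx x \to x$ is exactly what Lemma \ref{lema_1_identityB} and Theorem \ref{031116_Theo1B} give, and your derivation of $(H)$ from (43F25) via the substitution $x:=0$, $y:=z:=w$, simplified by Lemma \ref{general_properties2} (\ref{291014_08}), (\ref{291014_10}), (\ref{071114_04}) and (\ref{311014_06}), is correct step by step (the paper obtains the same implication with the substitution $x:=0$, $y:=0'$, $z:=a$). So the inclusion $43\mathcal F25 \subseteq 42\mathcal B35$ is in place.

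The genuine gap is the converse inclusion, i.e.\ the implication $(H) \Rightarrow (43F25)$, which you reduce correctly but never prove: you describe a plan and explicitly defer it as ``a long but routine computation,'' while acknowledging it is ``the main obstacle.'' That equational chain is the entire content of the hard half of the lemma, so the proof is incomplete without it. Worse, the plan as stated is not visibly executable: in the paper's derivation the decisive step is \emph{not} a consequence of $(H)$ at all, but the $\mathcal{I}_{2,0}$-identity of Lemma \ref{general_properties2} (\ref{070715_06}), $(x \to y) \to ((z \to y) \to (u \to z)) \approx (x \to y) \to (u \to z)$, which absorbs the offending factor via $(b \to c) \to ((c \to c) \to (a \to c)) = (b \to c) \to (a \to c)$, after (I) has been applied twice and the prime moved outward by Lemma \ref{lemma_071116_A} (\ref{081116_02}). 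Your declared toolkit --- items (\ref{081116_02})--(\ref{081116_04}) of Lemma \ref{lemma_071116_A} --- does not obviously achieve this absorption; for instance, (\ref{081116_03}) only reassociates $(c \to c) \to (a \to c)$ into $((c \to c) \to a) \to c$, which does not lead anywhere. Until you either invoke (\ref{070715_06}) (which is legitimately available, since it holds throughout $\mathcal{I}_{2,0}$ and needs no hypothesis on $\mathbf A$ beyond symmetry) or exhibit a complete chain from your listed identities, the inclusion $42\mathcal B35 \subseteq 43\mathcal F25$ remains unproved.
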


\begin{proof}
Let $\mathbf A \in 42\mathcal B35$.   
By Lemma \ref{lema_1_identityB},
\begin{equation} \label{111116_02}
\mathbf A \models 0 \to (x \to x) \approx x \to x.
\end{equation}
Hence, using Lemma \ref{lemma_071116_A}, we have
\begin{equation} \label{111116_03}
\mathbf A \models (x \to x) \to y' \approx ((x \to x) \to y)'.
\end{equation}
Let $a,b,c \in A$.  Then
\noindent $((a \to b) \to c) \to c $
$\overset{  (I) 
}{=}  ((c' \to a) \to (b \to c)')' \to c $
$\overset{   \ref{properties_of_I20_MC} (\ref{310516_02}) 
}{=}  c' \to ((c' \to a) \to (b \to c)')'' $
$\overset{  x'' \approx x 
}{=}  c' \to ((c' \to a) \to (b \to c)') $
$\overset{   \ref{properties_of_I20_MC} (\ref{310516_02}) 
}{=}  c' \to ((b \to c) \to (c' \to a)')  $
$\overset{   \ref{properties_of_I20_MC} (\ref{310516_01}) 
}{=}  (b \to c) \to (c' \to (c' \to a)') $
$\overset{   \ref{properties_of_I20_MC} (\ref{310516_02}) 
}{=}  (b \to c) \to ((c' \to a) \to c) $
$\overset{  (I) 
}{=}  (b \to c) \to ((c' \to c') \to (a \to c)')' $
$\overset{   \ref{properties_of_I20_MC} (\ref{310516_02}) 
}{=}  (b \to c) \to ((c \to c) \to (a \to c)')' $
$\overset{  (\ref{111116_03}) 
}{=}  (b \to c) \to ((c \to c) \to (a \to c))'' $
$\overset{  x'' \approx x 
}{=}  (b \to c) \to ((c \to c) \to (a \to c)) $
$\overset{   \ref{general_properties2} (\ref{070715_06}) 
}{=}  (b \to c) \to (a \to c) $
$\overset{   \ref{properties_of_I20_MC} (\ref{310516_01})
}{=}  a \to ((b \to c) \to c) $.
Hence, $\mathbf A \models (43F25)$ and consequently, $$42\mathcal B35 \subseteq  43\mathcal F25.$$       

Assume now that $\mathbf A \in 43\mathcal F25$ and $a \in A$. Then 
\noindent $0 \to (a \to a) $
$\overset{   \ref{general_properties_equiv} (\ref{TXX}) 
}{=}  0 \to ((0' \to a) \to a) $
$\overset{  (43F25) 
}{=}  ((0 \to 0') \to a) \to a $
$\overset{   \ref{general_properties_equiv} (\ref{LeftImplicationwithtilde}) \
}{=}  (0' \to a) \to a $
$\overset{   \ref{general_properties_equiv} (\ref{TXX}) 
}{=}  a \to a $.
Hence, using Lemma \ref{lemma_071116_A}, we get
\begin{equation} \label{111116_04}
\mathbf A \models (x \to x) \to (y \to z) \approx ((x \to x) \to y) \to z
\end{equation}
Using (\ref{111116_04})  we have that $\mathbf A \models (42B35)$.
This completes the proof.	
\end{proof}


\begin{Lemma} \label{lemma_061216_01}
	$43\mathcal A12 = 42\mathcal C12$.
\end{Lemma}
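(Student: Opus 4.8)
The statement asserts the equality of two subvarieties of $\mathcal S$, so the plan is to prove the two inclusions separately. The inclusion $43\mathcal A12 \subseteq 42\mathcal C12$ is immediate and requires no computation: the identity (42C12), namely $x \to (x \to (y \to y)) \approx x \to ((x \to y) \to y)$, is exactly the instance of (43A12), namely $x \to (x \to (y \to z)) \approx x \to ((x \to y) \to z)$, obtained by the substitution $z := y$. Hence any member of $\mathcal S$ satisfying (43A12) also satisfies (42C12), which gives $43\mathcal A12 \subseteq 42\mathcal C12$. The content of the lemma is therefore the reverse inclusion.

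For $42\mathcal C12 \subseteq 43\mathcal A12$, let $\mathbf A \in 42\mathcal C12$. By Lemma~\ref{lema_1_identityC} we have $\mathbf A \models 0 \to x \approx x \to x$, and I would first assemble the toolbox this unlocks: via Lemma~\ref{lemma_081116_A} the identities $0 \to (x \to x) \approx x \to x$ and $0 \to x' \approx 0 \to x$, and via Lemma~\ref{lemma_071116_A} the structural identities $(x \to x) \to y' \approx ((x \to x) \to y)'$ (\ref{081116_02}), $(x \to x) \to (y \to z) \approx ((x \to x) \to y) \to z$ (\ref{081116_03}), and $(x \to y) \to (x \to y) \approx (x \to x) \to (y \to y)$ (\ref{081116_04}). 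Next I would reduce the goal: using Lemma~\ref{general_properties2}\,(\ref{031114_04}) the left side of (43A12) collapses, $x \to (x \to (y \to z)) = x \to (y \to z)$, and using Lemma~\ref{properties_of_I20_MC}\,(\ref{310516_01}) the right side rewrites as $x \to ((x \to y) \to z) = (x \to y) \to (x \to z)$. Thus proving (43A12) in $\mathbf A$ amounts to establishing the single distributive-type identity $x \to (y \to z) \approx (x \to y) \to (x \to z)$.

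The heart of the proof—and the step I expect to be the main obstacle—is this distributive identity. My plan is to unfold $(x \to y) \to (x \to z)$ by the axiom (I), rewrite the resulting subterms using the commutativity identities Lemma~\ref{properties_of_I20_MC}\,(\ref{310516_01}) and (\ref{310516_02}) together with meet-commutativity $(u \to v')' \approx (v \to u')'$, and then collapse the expression back to $x \to (y \to z)$ by feeding in the $42\mathcal C12$-specific identities above (principally (\ref{081116_03}) and $0 \to (x \to x) \approx x \to x$). This mirrors the style of the derivations of (42G25) and (42C25) inside Theorem~\ref{081116_Theo1C}. The difficulty is precisely that $42\mathcal C12$ satisfies only $0 \to x \approx x \to x$ and not $0 \to x \approx x$, so the full De Morgan law $(x \to y)' \approx x' \to y'$ and the five-term collapse of Lemma~\ref{lemma_5terms} are \emph{not} available; every application of (I) must instead be steered through the weaker identities (\ref{081116_02})–(\ref{081116_04}), and keeping the primes and the left-to-right order of the variable $x$ correct through that unfolding is the delicate bookkeeping on which the argument turns.
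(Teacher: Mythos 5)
Your framework is sound, and your first inclusion is in fact cleaner than the paper's own: since (42C12) is literally the instance $z := y$ of (43A12), the inclusion $43\mathcal{A}12 \subseteq 42\mathcal{C}12$ follows by pure instantiation, whereas the paper re-derives $0 \to x \approx x \to x$ from (43A12) and then invokes Theorem~\ref{081116_Theo1C}. Your setup for the converse also matches the paper's: Lemma~\ref{lema_1_identityC} yields $0 \to x \approx x \to x$, then Lemmas~\ref{lemma_081116_A} and~\ref{lemma_071116_A} supply the structural identities, and your reduction of (43A12) to the self-distributive law $x \to (y \to z) \approx (x \to y) \to (x \to z)$ --- via Lemma~\ref{general_properties2}\,(\ref{031114_04}) on the left and Lemma~\ref{properties_of_I20_MC}\,(\ref{310516_01}) on the right --- is correct and is a genuinely nice repackaging of the goal.

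However, the proposal stops exactly where the content of the lemma begins: you never derive the distributive identity. You announce a strategy (``unfold by (I), rewrite, collapse back'') and yourself concede that carrying it through is ``the delicate bookkeeping on which the argument turns.'' That bookkeeping \emph{is} the theorem. The paper's proof of this direction is a chain of roughly forty equational steps that applies axiom (I) four times, re-uses the identity (42C12) itself at two intermediate junctures (your plan anticipates feeding in only (\ref{081116_03}) and $0 \to (x \to x) \approx x \to x$), and additionally threads in Lemma~\ref{general_properties2}\,(\ref{250615_04}), (\ref{281114_01}), (\ref{071114_04}) and (\ref{311014_06}), none of which appear in your toolbox. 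Nothing in the proposal verifies that an unfolding by (I) actually closes back up to $x \to (y \to z)$, and there is no a priori reason it must: as you note, the five-term collapse of Lemma~\ref{lemma_5terms} and the De Morgan law of Lemma~\ref{lemma_070616_01} are unavailable under the weaker hypothesis $0 \to x \approx x \to x$, so the success of the rewriting is precisely what has to be exhibited. As it stands, the inclusion $42\mathcal{C}12 \subseteq 43\mathcal{A}12$ is asserted with a plausible plan of attack, not proved.
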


\begin{proof}
Let $\mathbf A \in 43\mathcal A12$	and $a \in A$. Observe that
\noindent$a \to a$
$\overset{\ref{properties_of_I20_MC} (\ref{310516_02})}{ =}  a' \to a' $  
$\overset{x \approx x'' }{ = } a' \to a''' $  
$\overset{}{ =}  a' \to ((a' \to 0) \to 0) $ 
$ \overset{(43A12)}{=}  a' \to (a' \to (0 \to 0))$ 
$\overset{\ref{general_properties2} (\ref{031114_04})} =  a' \to (0 \to 0)$  
$\overset{}{ =}  a' \to 0' $ 
$\overset{\ref{properties_of_I20_MC} (\ref{310516_02})} { =}  0 \to a $. 
Hence, by Theorem \ref{081116_Theo1C}, $\mathbf A \in 42\mathcal C12$, implying $43\mathcal A12 \subseteq 42\mathcal C12$.

Conversely, take $\mathbf A \in 42\mathcal C12$ and $a,b,c \in A$. By Lemma \ref{lema_1_identityC}, 
\begin{equation} \label{condition_0implx_ximplx_1411}
\mathbf A \models 0 \to x \approx x \to x.
\end{equation}

Hence, by Lemma \ref{lemma_071116_A},
\begin{equation}  \label{141116_01}
\mathbf A \models (x \to x) \to y' \approx ((x \to x) \to y)'
\end{equation}
and 
\begin{equation}  \label{141116_02}
\mathbf A \models (x \to x) \to (y \to z) \approx ((x \to x) \to y) \to z.
\end{equation}
Therefore,
\noindent $a \to ((a \to b) \to c)$
$\overset{  (I) 
}{=}  a \to ((c' \to a) \to (b \to c)')' $
$\overset{   \ref{properties_of_I20_MC} (\ref{310516_02}) 
}{=}  ((c' \to a) \to (b \to c)') \to a' $
$\overset{  (I) 
}{=}  \{(a'' \to (c' \to a)) \to ((b \to c)' \to a')'\}' $
$\overset{  x \approx x'' 
}{=}  \{(a \to (c' \to a)) \to ((b \to c)' \to a')'\}' $
$\overset{   \ref{properties_of_I20_MC} (\ref{310516_01}) 
}{=}  \{(c' \to (a \to a)) \to ((b \to c)' \to a')'\}' $
$\overset{   \ref{general_properties2} (\ref{031114_04}) 
}{=}  \{(c' \to (c' \to (a \to a))) \to ((b \to c)' \to a')'\}' $
$\overset{  (42C12) 
}{=}  \{(c' \to ((c' \to a) \to a)) \to ((b \to c)' \to a')'\}' $
$\overset{   \ref{properties_of_I20_MC} (\ref{310516_01}) 
}{=}  \{((c' \to a) \to (c' \to a)) \to ((b \to c)' \to a')'\}' $
$\overset{  (\ref{141116_01}) 
}{=}  \{((c' \to a) \to (c' \to a)) \to ((b \to c)' \to a')\}'' $
$\overset{  x \approx x'' 
}{=}  ((c' \to a) \to (c' \to a)) \to ((b \to c)' \to a') $
$\overset{  (\ref{141116_02}) 
}{=}  (((c' \to a) \to (c' \to a)) \to (b \to c)') \to a' $
$\overset{  (\ref{141116_01}) 
}{=}  (((c' \to a) \to (c' \to a)) \to (b \to c))' \to a' $
$\overset{   \ref{properties_of_I20_MC} (\ref{310516_01}) 
}{=}  ((c' \to ((c' \to a) \to a)) \to (b \to c))' \to a' $
$\overset{  (42C12) 
}{=}  ((c' \to (c' \to (a \to a))) \to (b \to c))' \to a' $
$\overset{   \ref{general_properties2} (\ref{031114_04}) 
}{=}  ((c' \to (a \to a)) \to (b \to c))' \to a' $
$\overset{  (\ref{condition_0implx_ximplx_1411}) 
}{=}  ((c' \to (0 \to a)) \to (b \to c))' \to a' $
$\overset{   \ref{properties_of_I20_MC} (\ref{310516_01}) 
}{=}  ((0 \to (c' \to a)) \to (b \to c))' \to a' $
$\overset{   \ref{general_properties2} (\ref{071114_04}) and (\ref{311014_06}) 
}{=}  (((0 \to c') \to (0 \to a)) \to (b \to c))' \to a' $
$\overset{  (\ref{condition_0implx_ximplx_1411}) 
}{=}  (((c' \to c') \to (0 \to a)) \to (b \to c))' \to a' $
$\overset{   \ref{properties_of_I20_MC} (\ref{310516_02}) 
}{=}  (((c \to c) \to (0 \to a)) \to (b \to c))' \to a'  $
$\overset{  (\ref{condition_0implx_ximplx_1411}) 
}{=}  (((0 \to c) \to (0 \to a)) \to (b \to c))' \to a' $
$\overset{   \ref{general_properties2} (\ref{071114_04}) and (\ref{311014_06})
}{=}  ((0 \to (c \to a)) \to (b \to c))' \to a' $.

So,
\noindent $a \to ((a \to b) \to c) $
$\overset{   \ref{properties_of_I20_MC} (\ref{310516_01}) 
}{=}  ((c \to (0 \to a)) \to (b \to c))' \to a' $
$\overset{   \ref{properties_of_I20_MC} (\ref{310516_02}) 
}{=}  (((0 \to a)' \to c') \to (b \to c))' \to a' $
$\overset{   \ref{properties_of_I20_MC} (\ref{310516_02}) 
}{=}  (((0 \to a)' \to c') \to (c' \to b'))' \to a' $
$\overset{   \ref{general_properties2} (\ref{250615_04}) 
}{=}  ((0 \to (0 \to a)'') \to (c' \to b'))' \to a' $
$\overset{  x \approx x'' 
}{=}  ((0 \to (0 \to a)) \to (c' \to b'))' \to a' $
$\overset{   \ref{general_properties2} (\ref{031114_04}) 
}{=}  ((0 \to a) \to (c' \to b'))' \to a' $
$\overset{   \ref{properties_of_I20_MC} (\ref{310516_02}) 
}{=}  ((0 \to a) \to (b \to c))' \to a' $
$\overset{   \ref{general_properties2} (\ref{281114_01}) 
}{=}  a \to \{((0 \to a) \to (b \to c))' \to a'\} $
$\overset{  (\ref{condition_0implx_ximplx_1411}) 
}{=}  a \to \{((a \to a) \to (b \to c))' \to a'\} $
$\overset{  (\ref{141116_01}) 
}{=}  a \to \{((a \to a) \to (b \to c)') \to a'\} $
$\overset{  (I) 
}{=}  a \to \{(a \to (a \to a)) \to ((b \to c)' \to a')'\}' $
$\overset{   \ref{general_properties2} (\ref{031114_04}) 
}{=}  a \to \{(a \to a) \to ((b \to c)' \to a')'\}' $
$\overset{   \ref{properties_of_I20_MC} (\ref{310516_02}) 
}{=}  a \to \{((b \to c)' \to a') \to (a \to a)'\}' $
$\overset{   \ref{properties_of_I20_MC} (\ref{310516_02}) 
}{=}  a \to \{(a \to (b \to c)) \to (a \to a)'\}' $
$\overset{   \ref{properties_of_I20_MC} (\ref{310516_02}) 
}{=}  a \to \{(a \to (b \to c)) \to (a' \to a')'\}' $
$\overset{  (\ref{condition_0implx_ximplx_1411}) 
}{=}  a \to \{(a \to (b \to c)) \to (0 \to a')'\}' $
$\overset{  (I)      
}{=}  a \to (((b \to c) \to 0) \to a' ) $
$\overset{   \ref{properties_of_I20_MC} (\ref{310516_02})
}{=}  a \to (a \to (b \to c)) $.
Hence $\mathbf A \in 43\mathcal A12$.
\end{proof}

\section{Weak associative laws with length 4 and with 4 variables.}

\medskip 
The only word of length 4 with 4 variables is: 
\medskip  

A: $\langle t, x, y, z\rangle$.

\medskip 
 \noindent  Ways in which a word of length 4 can be bracketed:\medskip  

1: $a \to (a \to (a \to a))$, 

2: $a \to ((a \to a) \to a)$, 

3: $(a \to a) \to (a \to a)$, 

4: $(a \to (a \to a)) \to a$, 

5: $((a \to a) \to a) \to a$. \\

Let $L$ be the set consisting of the following varieties $44\mathcal A12$, $44\mathcal A13 $, $ 44\mathcal A14 $, $ 44\mathcal A15 $, $ 44\mathcal A23 $, $ 44\mathcal A24 $, $ 44\mathcal A34 $, $ 44\mathcal A35 $ and $ 44\mathcal A45$.

\begin{Theorem} \label{Theorem_091116_01}
	If $\mathcal X \in L$ then $\mathcal X = 33\mathcal{A}12$.
\end{Theorem}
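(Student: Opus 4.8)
The plan is to prove, for every $\mathcal X \in L$, the two inclusions $33\mathcal A12 \subseteq \mathcal X$ and $\mathcal X \subseteq 33\mathcal A12$ separately, recalling from Lemma \ref{Old_Lemma_10.3(3)} that $33\mathcal A12 = \mathcal{SL}$. For the first inclusion it suffices to observe that $\mathbf{2_s}$, whose operation $\to$ is the semilattice join and is therefore associative and commutative, satisfies every bracketing of the word $\langle t,x,y,z\rangle$ and hence lies in each $\mathcal X \in L$; since $\mathcal V(\mathbf{2_s}) = \mathcal{SL}$, this gives $\mathcal{SL} \subseteq \mathcal X$. Thus the real content is the reverse inclusion, and since $\mathbf A \in \mathcal X \subseteq \mathcal S$ already, it is enough to show that an arbitrary $\mathbf A \in \mathcal X$ satisfies the length-$3$ associative law $(33A12)$, namely $x \to (y \to z) \approx (x \to y) \to z$.

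The main device is the identity $0' \to w \approx w$ (Lemma \ref{general_properties_equiv}(\ref{TXX})), which lets the leftmost argument act as a left identity. Substituting $t := 0'$ into the defining identity of $\mathcal X$ collapses bracketings $1,3$ to $x \to (y \to z)$ and bracketings $2,4,5$ to $(x \to y) \to z$; hence for each $\mathcal X$ in $\{44\mathcal A12,\,44\mathcal A14,\,44\mathcal A15,\,44\mathcal A23,\,44\mathcal A34,\,44\mathcal A35\}$ this single substitution yields exactly $(33A12)$. For $44\mathcal A13$ the choice $t := 0'$ is vacuous, so I would instead put $y := 0'$, which turns the common right factor $y \to z$ into $z$ and again produces $(33A12)$; symmetrically, $x := 0'$ turns $x \to y$ into $y$ and settles $44\mathcal A24$.

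The remaining case $44\mathcal A45$ is the genuinely delicate one, and I expect it to be the main obstacle: both bracketings end in $\to z$, and as no element of $\mathcal S$ is a right identity in general, no single substitution can isolate the associative law. The plan here is a two-step argument. First, setting $t = x = y = 0$ and $z := a$ in $(44A45)$ and simplifying with Lemma \ref{general_properties_equiv}(\ref{TXX}), (\ref{LeftImplicationwithtilde}) together with Lemma \ref{general_properties2}(\ref{031114_04}) (to evaluate $0 \to 0' = 0'$) collapses the identity to $a \approx 0 \to a$, so that $\mathbf A \models 0 \to x \approx x$. Once this holds, Lemma \ref{lemma_070616_01} supplies the De Morgan law $(x \to y)' \approx x' \to y'$. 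Then substituting $z := 0$ into $(44A45)$ replaces each outer $\to 0$ by $\,'$, and pushing these negations inward via De Morgan rewrites the identity as $t' \to (x' \to y') \approx (t' \to x') \to y'$; finally, reinstating arbitrary arguments through the involution $x'' \approx x$ yields $(33A12)$. This establishes $\mathbf A \in 33\mathcal A12$ in every case and completes the reverse inclusion, whence $\mathcal X = 33\mathcal A12$ for all $\mathcal X \in L$. The obstacle is precisely this last case, where the trailing $z$ blocks a one-shot reduction and forces the detour through $0 \to x \approx x$ and the De Morgan law.
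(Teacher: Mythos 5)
Your proof is correct, and for eight of the nine varieties in $L$ it is exactly the paper's argument: the paper's proof consists of the same substitutions --- $t := 0'$ for $44\mathcal A12$, $44\mathcal A14$, $44\mathcal A15$, $44\mathcal A23$, $44\mathcal A34$, $44\mathcal A35$, then $y := 0'$ for $44\mathcal A13$ and $x := 0'$ for $44\mathcal A24$ --- each simplified using only Lemma \ref{general_properties_equiv} (\ref{TXX}) and $x'' \approx x$. (For the easy inclusion $33\mathcal A12 \subseteq \mathcal X$ the paper implicitly uses the even shorter observation that associativity makes all five bracketings of $\langle t,x,y,z\rangle$ coincide; your detour through $\mathbf{2_s}$ and Lemma \ref{Old_Lemma_10.3(3)} is sound but unnecessary.) The one genuine divergence is $44\mathcal A45$, and there your guiding premise --- that since both sides end in $\to z$ and $\mathcal S$ has no right identity, ``no single substitution can isolate the associative law'' --- is false. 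The paper settles this case with the single substitution $z := 0$: the identity becomes $(t \to (x \to y))' \approx ((t \to x) \to y)'$, and applying $(\cdot)'$ to both sides and using $w'' \approx w$ gives associativity outright. What makes $z := 0$ work is not any identity-element property of $0$, but the fact that $w \mapsto w \to 0 = w'$ is injective in $\mathcal S$. Your two-step workaround --- first extracting $0 \to x \approx x$ from the substitution $t = x = y = 0$, then invoking Lemma \ref{lemma_070616_01} to push primes inward by De Morgan and renaming variables through the surjectivity of $'$ --- is valid at every step, but both the auxiliary identity and the De Morgan law are superfluous; the paper's involution trick does it in one line and keeps the whole proof uniform, with one substitution per case and the same two simplification rules throughout.
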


\begin{proof}

It is easy to see that $33\mathcal{A}12 \subseteq \mathcal X$.

		
		\medskip
		
		Let $\mathbf A \in \mathcal X$ and let 
		$a, b, c \in A$.
		
		To facilitate a uniform presentation (and to make the proof shorter), we introduce the following notation, where $u_0, x_0, y_0, z_0 \in A$:\\
		
		The notation	
		$$ \mathcal X/ u_0, x_0, y_0, z_0$$ denotes the following 
		statement: 
		
			``In the identity (X) that defines the variety $\mathcal X$, relative to $\mathcal S$, if we assign $u:= u_0, x:= x_0, y:= y_0, z:= z_0$ 
			(and simplify it using Lemma \ref{general_properties_equiv} (\ref{TXX}) and $x \approx x''$), 
			then $\mathbf{A} \models  x \to (y \to z) \approx (x \to y) \to z$''. \\  
		
\noindent Hence, we consider the varieties associated with the following statements: 
		
		
			




		

	\begin{multicols}{3}
		
\begin{enumerate}
\item ${\rm 44}\mathcal A\rm{12/ 0', a, b, c;}$
\item $ {\rm 44}\mathcal {\rm A13/ a, b, 0', c};$
\item ${\rm 44}\mathcal A\rm{14/ 0', a, b, c;}$
\item $ {\rm 44}\mathcal {\rm A15/ 0', a,b, c};$
\item $ {\rm 44}\mathcal A\rm{23/ 0', a, b, c;}$
\item ${\rm 44}\mathcal {\rm A24/ a, 0', b, c};$
\item ${\rm 34}\mathcal A\rm{12/ 0', a, b, c;}$
\item ${\rm 44}\mathcal {\rm A35/ 0', a, b, c};$
\item ${\rm 44}\mathcal A\rm{45/ a, b, c, 0.}$
\end{enumerate}
		
\end{multicols}

		It is routine to verify that each of the above statements is true, from which it follows that, 
for $\mathcal X \in L$, $\mathcal X \in 33\mathcal{A}12$.
\end{proof}

\begin{Lemma} \label{lemma_111116_01}
	If $\mathbf A \in 31\mathcal A12 \cup 43\mathcal A23 \cup 44\mathcal A25$ then  $\mathbf A \models 0 \to x \approx x$.
\end{Lemma}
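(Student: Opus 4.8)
The plan is to split the union into its three defining varieties and, in each case, derive $0 \to x \approx x$ directly. Since $\mathbf A \in \mathcal S \subseteq \mathcal I_{2,0}$, all the identities of Lemmas \ref{general_properties_equiv}, \ref{general_properties2} and \ref{properties_of_I20_MC} are available; I will lean on $0' \to x \approx x$ (Lemma \ref{general_properties_equiv} (\ref{TXX})), $x' \to x \approx x$ (Lemma \ref{general_properties_equiv} (\ref{LeftImplicationwithtilde})), $x \to y \approx x \to (x \to y)$ (Lemma \ref{general_properties2} (\ref{031114_04})), and $0 \to (0 \to x)' \approx 0 \to x'$ (Lemma \ref{general_properties2} (\ref{031114_07})). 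Two small arithmetic consequences I will reuse are $0 \to 0' = 0 \to (0 \to 0) = 0 \to 0 = 0'$ and $0' \to 0 = 0$. For the one-variable case $\mathbf A \in 31\mathcal A12$, the cleanest route is through Lemma \ref{Lemma_0_C_equal_0}: substituting $x := 0$ in (31A12), the left side is $0 \to (0 \to 0) = 0 \to 0' = 0'$ and the right side is $(0 \to 0) \to 0 = 0' \to 0 = 0$, so $\mathbf A \models 0 \approx 0'$, whence Lemma \ref{Lemma_0_C_equal_0} gives $0 \to x \approx x$.

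For $\mathbf A \in 43\mathcal A23$, whose defining identity is $x \to ((x \to y) \to z) \approx (x \to x) \to (y \to z)$, I would substitute $x := 0$; the right side then collapses to $0' \to (y \to z) = y \to z$, leaving $0 \to ((0 \to y) \to z) \approx y \to z$. Setting $z := 0$ and using $(0 \to y) \to 0 = (0 \to y)'$ together with Lemma \ref{general_properties2} (\ref{031114_07}) rewrites the left side as $0 \to y'$, so $0 \to y' \approx y'$. Replacing $y$ by $x'$ and applying $x'' \approx x$ then yields $0 \to x \approx x$.

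For $\mathbf A \in 44\mathcal A25$, with identity $t \to ((x \to y) \to z) \approx ((t \to x) \to y) \to z$, the substitution $t := 0$, $x := 0'$ does the work: the left side simplifies (via $0' \to y = y$) to $0 \to (y \to z)$, while the right side simplifies (via $0 \to 0' = 0'$ and then $0' \to y = y$) to $y \to z$, giving $0 \to (y \to z) \approx y \to z$. Specializing $y := 0'$ and using $0' \to z = z$ produces $0 \to z \approx z$.

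Each case is short once the right substitution is identified, so I expect the main (mild) obstacle to be the final \emph{un-substitution} step in the $43\mathcal A23$ and $44\mathcal A25$ cases: one must recognize that the partially evaluated identities $0 \to y' \approx y'$ and $0 \to (y \to z) \approx y \to z$ already range over all elements — in the first case because $x \mapsto x'$ is a bijection (as $x'' \approx x$), and in the second because every element is of the form $0' \to z$. Beyond that, the only thing requiring care is verifying the auxiliary evaluations $0 \to 0' = 0'$ and $0' \to 0 = 0$ that justify the simplifications.
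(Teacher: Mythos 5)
Your proof is correct, and for $31\mathcal A12$ it coincides with the paper's own argument (derive $0 \approx 0'$ from the substitution $x:=0$, then conclude). For the other two cases, however, you take a genuinely different, if equally short, route. The paper is uniform across all three varieties: it substitutes only the constants $0$ and $0'$ into the defining identity --- $x:=0$, $y:=0'$, $z:=0$ in (43A23), and $t:=0$, $x:=0$, $y:=0'$, $z:=0$ in (44A25) --- so that each case yields the single constant identity $0 \approx 0'$, after which one application of Lemma \ref{general_properties_equiv} (\ref{TXX}) (equivalently, Lemma \ref{Lemma_0_C_equal_0}) finishes all three cases simultaneously. You instead keep variables alive: from (43A23) with $x:=0$ you extract the parametrized identity $0 \to y' \approx y'$, at the cost of invoking Lemma \ref{general_properties2} (\ref{031114_07}), which the paper's proof never needs; and from (44A25) with $t:=0$, $x:=0'$ you extract $0 \to (y \to z) \approx y \to z$, then specialize. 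Both routes are sound: the paper's buys uniformity and a minimal lemma inventory, while yours reaches the conclusion in the second and third cases without ever passing through $0 \approx 0'$, and produces stronger intermediate identities along the way. One small remark: your closing concern about ``un-substitution'' is unnecessary overhead --- an identity established for all values of $y$ may simply be instantiated at $y:=x'$ or $y:=0'$ and simplified using $x'' \approx x$ or $0' \to z \approx z$; no bijectivity or ``every element has the form $0' \to z$'' argument is needed.
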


\begin{proof}
	First, we will show that if $\mathbf A \in 31\mathcal A12 \cup 43\mathcal A23 \cup 44\mathcal A25$, then  $\mathbf A \models 0' \approx 0$.

Assume that $\mathbf A \in 31\mathcal A12$. Then,  by Lemma \ref{general_properties_equiv} (\ref{LeftImplicationwithtilde})  and (31A12), $0' = 0 \to 0' = 0 \to (0 \to 0) = (0 \to 0) \to 0 = 0' \to 0 = 0$.\\
If $\mathbf A \in 43\mathcal A23$,

$0 \overset{\ref{general_properties_equiv} (\ref{TXX})}{ =}  0' \to 0 
\overset{\ref{general_properties_equiv} (\ref{TXX})}{ =}  0' \to (0' \to 0) 
 =  (0 \to 0) \to (0' \to 0) 
\overset{(43A23)}{ =} 0 \to ((0 \to 0') \to 0) 
\overset{\ref{general_properties_equiv} (\ref{LeftImplicationwithtilde}) }{ =}  0 \to (0' \to 0) 
\overset{\ref{general_properties_equiv} (\ref{TXX})} { =}  0 \to 0 
 =  0'.$ 

If $\mathbf A \in 44\mathcal A25$,

$0'  =  0 \to 0 
\overset{\ref{general_properties_equiv} (\ref{TXX})}{ =}  0 \to (0' \to 0) 
 =  0 \to ((0 \to 0') \to 0) 
\overset{(44A25)}{ =}  ((0 \to 0) \to 0') \to 0 
 =  (0' \to 0') \to 0 
\overset{ \ref{general_properties_equiv} (\ref{TXX})}{ =}  0' \to 0 
\overset{\ref{general_properties_equiv} (\ref{TXX})}{ =} 0.$ 

Therefore, $$\mathbf A \models 0 \approx 0'.$$

 Hence, for $a \in A$, $0 \to a = 0' \to a = a$ by Lemma \ref{general_properties_equiv} (\ref{TXX}).
\end{proof}

\begin{Lemma} \label{lemma_111116_02}
	If $\mathbf A \models 0 \to x \approx x$ then  $\mathbf A \in 31\mathcal A12 \cap 43\mathcal A23 \cap 44\mathcal A25$.
\end{Lemma}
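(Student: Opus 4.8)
The goal is to prove the converse of Lemma \ref{lemma_111116_01}: assuming $\mathbf A \models 0 \to x \approx x$, show that $\mathbf A$ belongs to each of the three varieties $31\mathcal A12$, $43\mathcal A23$, and $44\mathcal A25$. Since membership in these varieties means satisfying the respective defining identities, I would verify each of the three identities separately under the hypothesis. The plan is to use the machinery already developed for the case $0 \to x \approx x$, particularly Lemma \ref{lemma_5terms} (giving $e_1 \approx e_2$ for all $e_1,e_2 \in E$), Lemma \ref{031116_01_Assoc1} (giving $x \to (x \to x) \approx (x \to x) \to x$), and Lemma \ref{lemma_091116_01}.

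First I would establish $\mathbf A \in 31\mathcal A12$. This is immediate: the defining identity of $31\mathcal A12$ is exactly $x \to (x \to x) \approx (x \to x) \to x$, which is precisely the conclusion of Lemma \ref{031116_01_Assoc1}, available under the hypothesis $0 \to x \approx x$. Next I would handle $44\mathcal A25$, whose identity has bracketting numbers $2$ and $5$ on the word $\langle t,x,y,z\rangle$, i.e. $t \to ((x \to y) \to z) \approx ((t \to x) \to y) \to z$ (or the appropriate relabelling consistent with the paper's bracketting list). Matching these against the terms in the set $E$, the left side corresponds to a term of type $t_2$ and the right side to a term of type $t_1$ (after the substitution making the leading variables agree); since Lemma \ref{lemma_5terms} gives $t_1 \approx t_2$ whenever $0 \to x \approx x$, this identity follows directly. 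I would make the variable correspondence explicit so the reader can match the bracketting numbers to $t_1$ and $t_2$.

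The slightly more involved case is $43\mathcal A23$, whose identity (from Section \ref{section_length4_3variables}, word $\langle x,x,y,z\rangle$ with bracketting numbers $2$ and $3$) reads $x \to ((x \to y) \to z) \approx (x \to x) \to (y \to z)$. For the left-hand side I would apply Lemma \ref{properties_of_I20_MC} (\ref{310516_01}) to pull variables into position and reduce it, via Lemma \ref{lemma_5terms}, to one of the $E$-terms; the right-hand side $(x \to x) \to (y \to z)$ is similarly an $E$-term (essentially $t_3$ with the leading arguments being $x,x$). Concretely, both sides collapse to $x \to (x \to (y \to z))$ after using Lemma \ref{properties_of_I20_MC} (\ref{310516_01}) and the mutual equality of the terms in $E$. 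The expected main obstacle is purely bookkeeping: correctly matching the paper's bracketting numbers to the specific shapes $t_1,\dots,t_5$ and choosing the right variable substitutions, since an off-by-one in the bracketting index would point to the wrong term. Once the substitutions are pinned down, each verification is a short chain of applications of Lemma \ref{lemma_5terms} together with Lemma \ref{properties_of_I20_MC} (\ref{310516_01}) and Lemma \ref{031116_01_Assoc1}, with no genuinely hard step.
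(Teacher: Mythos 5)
Your overall strategy is the paper's own: dispose of (31A12) via Lemma \ref{031116_01_Assoc1}, and obtain the two length-4 identities from Lemma \ref{lemma_5terms} combined with the exchange law of Lemma \ref{properties_of_I20_MC} (\ref{310516_01}). That strategy does work, and your treatment of $31\mathcal A12$ is fine (indeed cleaner than the paper, which re-derives the identity directly from Lemma \ref{general_properties2} (\ref{031114_04}), the hypothesis, and Lemma \ref{general_properties2} (\ref{291014_10})). However, one of your ``concrete'' steps is false, not just loose bookkeeping: you claim that both sides of (43A23), i.e.\ of $x \to ((x \to y) \to z) \approx (x \to x) \to (y \to z)$, collapse to $x \to (x \to (y \to z))$. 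By Lemma \ref{general_properties2} (\ref{031114_04}) that term equals $x \to (y \to z)$, and this is \emph{not} equal to either side of (43A23) under the hypothesis $0 \to x \approx x$. Counterexample: the four-element algebra displayed in the proof of Theorem \ref{theo_5_dist_var} (the one witnessing $43\mathcal A23 \not\subseteq 43\mathcal A12$) satisfies $0 \to x \approx x$, and at $x=1$, $y=z=0$ both sides of (43A23) evaluate to $3$, whereas $x \to (y \to z) = 1 \to 0 = 2$. So the reduction you propose would be an attempt to prove a false identity. The correct execution --- which is what the paper does, and which your preceding sentence actually gestures at --- is: by Lemma \ref{properties_of_I20_MC} (\ref{310516_01}) the left side equals $(x \to y) \to (x \to z) = t_5(x,x,y,z)$, the right side is $t_3(x,x,y,z)$, and Lemma \ref{lemma_5terms} gives $t_3 \approx t_5$.

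Your matching for (44A25) is also off, though here the repair is routine. Lemma \ref{lemma_5terms} equates the terms of $E$ only when they are evaluated at the \emph{same} argument tuple, and no single tuple realizes your ``left side is $t_2$, right side is $t_1$'' pairing: the left side $t \to ((x \to y) \to z)$ is $t_2(y,x,t,z)$, but $t_1(y,x,t,z) = ((y \to x) \to t) \to z$, which is not the right side $((t \to x) \to y) \to z$. With the tuple $(y,x,t,z)$ the right side is $t_4(y,x,t,z)$, so the identity follows from $t_2 \approx t_4$; equivalently (the paper's choice) write the right side as $t_4(b,a,d,c)$, use $t_4 \approx t_3$ to get $(a \to b) \to (d \to c)$, and finish with Lemma \ref{properties_of_I20_MC} (\ref{310516_01}). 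In summary: same route as the paper and easily completed, but as written the (43A23) step fails outright and must be replaced by the $t_3 \approx t_5$ argument before the proposal constitutes a proof.
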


\begin{proof}
	Let $a,b,c,d \in A$.  Since  
$a \to (a \to a) \overset{\ref{general_properties2} (\ref{031114_04})}{ =}  a \to a 
\overset{hyp}{=}  (0 \to a) \to a 
\overset{\ref{general_properties2} (\ref{291014_10})}{=}  (a \to a) \to a,$ 
we have $\mathbf A \models (31A12)$.
Observe that, by Lemma \ref{lemma_5terms}  we have that 
$\mathbf A \models t_3(x,y,z,t) \approx t_5(x,y,z,t)$ and $\mathbf A \models t_3(x,y,z,t) \approx t_4(x,y,z,t)$. Hence,

$(a \to a) \to (b \to c)  =  t_3(a,a,b,c) 
 =  t_5(a,a,b,c) 
 =  (a \to b) \to (a \to c) 
\overset{\ref{properties_of_I20_MC} (\ref{310516_01})}{ =}  a \to ((a \to b) \to c)$ 
and consequently, the algebra $\mathbf A$ satisfies (43A23).  Also, the algebra satisfies the identity (44A25), since

$((d \to a) \to b) \to c =  t_4(b,a,d,c) 
 = t_3(b,a,d,c) 
 =  (a \to b) \to (d \to c) 
 \overset{\ref{properties_of_I20_MC} (\ref{310516_01})} {=}  d \to ((a \to b) \to c)$,  
completing the proof.
\end{proof}

	


\begin{Lemma} \label{lemma_061216_02}
	 $31\mathcal A12 = 43\mathcal A23 = 44\mathcal A25$ \label{061216_03}
\end{Lemma}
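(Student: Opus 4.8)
<br>

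The goal is to prove the three-way equality $31\mathcal{A}12 = 43\mathcal{A}23 = 44\mathcal{A}25$. The plan is to establish this as an immediate consequence of the two preceding lemmas, which together give a complete characterization of each of these three varieties by a single, common identity.

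First I would observe that Lemma~\ref{lemma_111116_01} shows the containment $31\mathcal{A}12 \cup 43\mathcal{A}23 \cup 44\mathcal{A}25 \subseteq \{ \mathbf A \in \mathcal S : \mathbf A \models 0 \to x \approx x \}$; that is, each of the three defining identities (31A12), (43A23), (44A25) forces the identity $0 \to x \approx x$ relative to $\mathcal S$. Conversely, Lemma~\ref{lemma_111116_02} shows that $0 \to x \approx x$ forces all three of those identities simultaneously, giving $\{ \mathbf A \in \mathcal S : \mathbf A \models 0 \to x \approx x \} \subseteq 31\mathcal{A}12 \cap 43\mathcal{A}23 \cap 44\mathcal{A}25$. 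Combining these two inclusions, each of $31\mathcal{A}12$, $43\mathcal{A}23$, $44\mathcal{A}25$ coincides with the subvariety of $\mathcal S$ defined by $0 \to x \approx x$, and hence they are equal to one another.

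Concretely, I would write: by Lemma~\ref{lemma_111116_01}, if $\mathbf A$ belongs to any one of the three varieties then $\mathbf A \models 0 \to x \approx x$; and by Lemma~\ref{lemma_111116_02}, if $\mathbf A \models 0 \to x \approx x$ then $\mathbf A$ belongs to all three. Thus for each $\mathcal V \in \{31\mathcal A12, 43\mathcal A23, 44\mathcal A25\}$ we have
\[
\mathcal V = \{ \mathbf A \in \mathcal S : \mathbf A \models 0 \to x \approx x \},
\]
and the desired equality follows at once.

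I do not anticipate any real obstacle here, since all the substantive work has already been carried out in the two cited lemmas; the only care needed is to chain the set-inclusions correctly, noting that Lemma~\ref{lemma_111116_01} gives the direction ``membership implies $0 \to x \approx x$'' for the union of the three varieties while Lemma~\ref{lemma_111116_02} gives the reverse direction into the intersection. The fact that one lemma speaks of a union and the other of an intersection is exactly what collapses the three varieties to a single common description, so the statement is a clean corollary of the preceding two results.
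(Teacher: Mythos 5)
Your proposal is correct and follows exactly the paper's argument: the paper likewise deduces the lemma as an immediate consequence of Lemma~\ref{lemma_111116_01} and Lemma~\ref{lemma_111116_02}, with your write-up merely making explicit the union/intersection bookkeeping that the paper leaves implicit. No gaps here.
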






\begin{proof} 
	The proof is immediate from  Lemma \ref{lemma_111116_01} and Lemma \ref{lemma_111116_02}.
\end{proof}

\section{MAIN THEOREM} \label{section_main_theorem}

The purpose of this section is to prove our main theorem.  The following proposition will be needed to complete the proof of the main theorem.

In the following proposition we combine all the results obtained so far both in this paper and in 
\cite{cornejo2016BolMoufang}.  In the latter, each of the 60 varieties defined by the weak associative laws of lenth 4 with 3 variables was shown to be equal to one of the three varieties:  $43\mathcal A12$, $43\mathcal A23$, and $43\mathcal F25$.  Therefore, we will need only consider these three varieties with the remaining 95 varieties in the following result.


\begin{Proposition} \label{prop_to_Main_Theo}
Each of the 155 weak associative subvarieties of 
$\mathcal{S}$ 
is equal to one of the follwing varieties:
				$$\mathcal{SL},\  43\mathcal A12,\  43\mathcal A23,\  42\mathcal A12,\  43\mathcal F25\   \mbox{ and }  \mathcal{S}.$$
\end{Proposition}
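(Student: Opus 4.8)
The plan is to prove Proposition \ref{prop_to_Main_Theo} purely by bookkeeping: every one of the 155 weak associative subvarieties has already been shown, in some earlier lemma or theorem, to coincide with one of the six listed varieties, so the proof amounts to assembling those identifications and verifying that the resulting partition covers all 155 cases with no omissions. First I would organize the 155 varieties by length and number of variables, since the excerpt is structured that way: length 3 (with 1, 2, 3 variables), length 4 with 1 variable, length 4 with 2 variables (70 of them), length 4 with 3 variables (the 60 Bol-Moufang varieties), and length 4 with 4 variables (9 of them, namely the set $L$ plus $44\mathcal A25$).

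For the length-3 cases I would invoke Theorem \ref{theorem_131216_01} ($\mathcal{FLEX}=\mathcal{RALT}=\mathcal{LALT}=\mathcal{SL}$) and Lemma \ref{Old_Lemma_10.3(3)} ($33\mathcal A12=\mathcal{SL}$), together with the trivial identity $31\mathcal A12$, which by Lemma \ref{lemma_061216_02} equals $43\mathcal A23$. For length 4 with 1 variable, Lemma \ref{lemma_131216_01} collapses the ten varieties into the blocks $\mathcal S$ (items \ref{281016_04}, \ref{281016_05}) and $31\mathcal A12$ (via Lemma \ref{Old_Lemma 10.3(1)}, $41\mathcal A14=31\mathcal A12$). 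For the 70 length-4 two-variable varieties I would cite, in order: Lemma \ref{lemma_081116_02} and Lemma \ref{lemma_081116_03} (the set $M$), Theorem \ref{theorem_42X_equalTo_SL} (the 26 varieties in $N$ all equal $\mathcal{SL}$), Theorem \ref{theorem_081116_01} and Lemma \ref{Old_Lemma_10.3(2)} (the block of ten identities in $I$, all equal to $42\mathcal A24=31\mathcal A12$), Theorem \ref{theo_081116_01} (the six in $J$, giving $42\mathcal B35=43\mathcal F25$ by Lemma \ref{Ola_Lemma_10.3(5)}), Lemma \ref{lemma_081116_06} ($42\mathcal A12=42\mathcal D23=42\mathcal D35$), and Theorem \ref{theo_081116_02} (the five in $K$, giving $42\mathcal C12=43\mathcal A12$ by Lemma \ref{lemma_061216_01}). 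The 60 Bol-Moufang varieties are handled wholesale by Theorem \ref{theo_4_dist_var} as recalled in the paragraph before the proposition: each equals $43\mathcal A12$, $43\mathcal A23$, or $43\mathcal F25$. Finally the nine varieties in $L$ plus $44\mathcal A25$ are dispatched by Theorem \ref{Theorem_091116_01} ($\mathcal X\in L\Rightarrow\mathcal X=33\mathcal A12=\mathcal{SL}$) and Lemma \ref{lemma_061216_02} ($44\mathcal A25=43\mathcal A23$).

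The bulk of the write-up would be a single consolidated accounting, perhaps presented as a table or as grouped lists, assigning each of the 155 identities to its representative among $\mathcal{SL}$, $43\mathcal A12$, $43\mathcal A23$, $42\mathcal A12$, $43\mathcal F25$, $\mathcal{S}$. I would also need to fold in the chain of equalities that ties the named representatives together: $31\mathcal A12=43\mathcal A23=44\mathcal A25$ (Lemma \ref{lemma_061216_02}), $33\mathcal A12=\mathcal{SL}$, $42\mathcal A24=31\mathcal A12$, $42\mathcal B35=43\mathcal F25$, and $42\mathcal C12=43\mathcal A12$, so that the many intermediate labels reduce to exactly the six names in the statement.

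The main obstacle here is not mathematical depth but completeness of the combinatorial audit: I must confirm that the partition blocks I extract from the cited results genuinely exhaust all 155 varieties, with each variety counted once and none left unclassified. The riskiest point is that several of the earlier lemmas classify varieties only up to one of the \emph{intermediate} labels (e.g.\ $42\mathcal A24$, $42\mathcal B35$, $42\mathcal C12$, $44\mathcal A25$, $31\mathcal A12$, $33\mathcal A12$), so I would have to carefully chase each intermediate label down to one of the six final representatives via the equalities above, and make sure no variety slips through because its classification lemma names a label I forgot to reduce. Consequently the proof body would be essentially a verification that the union of the blocks has cardinality 155, which I would present by walking through the categories in the order above and tallying the counts ($1+3+1$ for length 3; $10$ for length-4 one-variable; $70$ for length-4 two-variable; $60$ for three-variable; $10$ for four-variable) to confirm the total.
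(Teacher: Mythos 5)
Your proposal matches the paper's own proof in both substance and structure: the paper proves the proposition exactly as you describe, by assembling six chains of equalities (one per representative variety) whose individual links are the previously established lemmas and theorems you cite, with the Bol-Moufang block handled wholesale via Theorem \ref{theo_4_dist_var} and the intermediate labels ($31\mathcal A12$, $33\mathcal A12$, $42\mathcal A24$, $42\mathcal B35$, $42\mathcal C12$, $44\mathcal A25$) chased down to the six final names just as you indicate. Your tally is also correct ($5+10+70+60+10=155$), apart from the harmless slip of writing ``9 of them'' for the four-variable case before correctly counting it as $10$ in the final audit.
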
\label{New_Prop}

\begin{proof} We have

\begin{enumerate}

\item  $\mathcal{SL}  
\overset{\ref{theorem_131216_01}}{=} \mathcal{LALT}
\overset{\ref{theorem_131216_01}}{=} \mathcal{FLEX}
\overset{\ref{theorem_131216_01}}{=}\mathcal{RALT}   
\overset{\ref{Old_Lemma_10.3(3)}}{=} 33\mathcal A12 
\overset{\ref{theorem_42X_equalTo_SL}}{=} 42\mathcal A14  
\overset{\ref{theorem_42X_equalTo_SL}}{=} 42\mathcal B14 
\overset{\ref{theorem_42X_equalTo_SL}}{=} 42\mathcal B24 
\overset{\ref{theorem_42X_equalTo_SL}}{=} 42\mathcal B34
\overset{\ref{theorem_42X_equalTo_SL}}{=} 42\mathcal B45
\overset{\ref{theorem_42X_equalTo_SL}}{=} 42\mathcal C14
\overset{\ref{theorem_42X_equalTo_SL}}{=} 42\mathcal C24
\overset{\ref{theorem_42X_equalTo_SL}}{=} 42\mathcal C34
\overset{\ref{theorem_42X_equalTo_SL}}{=}42\mathcal C45
\overset{\ref{theorem_42X_equalTo_SL}}{=} 42\mathcal D14
\overset{\ref{theorem_42X_equalTo_SL}}{=}42\mathcal D34
\overset{\ref{theorem_42X_equalTo_SL}}{=} 42\mathcal E12
\overset{\ref{theorem_42X_equalTo_SL}}{=} 42\mathcal E14
\overset{\ref{theorem_42X_equalTo_SL}}{=} 42\mathcal E23
\overset{\ref{theorem_42X_equalTo_SL}}{=} 42\mathcal E24 
\overset{\ref{theorem_42X_equalTo_SL}}{=} 42\mathcal E34
\overset{\ref{theorem_42X_equalTo_SL}}{=} 42\mathcal E35
\overset{\ref{theorem_42X_equalTo_SL}}{=}42\mathcal E45
\overset{\ref{theorem_42X_equalTo_SL}}{=} 42\mathcal F15
\overset{\ref{theorem_42X_equalTo_SL}}{=} 42\mathcal F23
\overset{\ref{theorem_42X_equalTo_SL}}{=} 42\mathcal F24
\overset{\ref{theorem_42X_equalTo_SL}}{=} 42\mathcal F45
\overset{\ref{theorem_42X_equalTo_SL}}{=} 42\mathcal G23
\overset{\ref{theorem_42X_equalTo_SL}}{=} 42\mathcal G24
\overset{\ref{theorem_42X_equalTo_SL}}{=} 42\mathcal G35 
\overset{\ref{theorem_42X_equalTo_SL}}{=} 42\mathcal G45
\overset{\ref{lemma_081116_02} (\ref{231116_07})}{=} 42\mathcal E15 
\overset{\ref{lemma_081116_03}}{=} 42\mathcal G14
\overset{\ref{lemma_081116_03}}{=} 42\mathcal F13
\overset{\ref{lemma_081116_03}}{=} 42\mathcal F14
\overset{\ref{lemma_081116_03}}{=} 42\mathcal F34  
\overset{\ref{Theorem_091116_01}}{=} 44\mathcal A12
\overset{\ref{Theorem_091116_01}}{=} 44\mathcal A13
\overset{\ref{Theorem_091116_01}}{=} 44\mathcal A14
\overset{\ref{Theorem_091116_01}}{=} 44\mathcal A15
\overset{\ref{Theorem_091116_01}}{=} 44\mathcal A23
\overset{\ref{Theorem_091116_01}}{=} 44\mathcal A24
\overset{\ref{Theorem_091116_01}}{=} 44\mathcal A34
\overset{\ref{Theorem_091116_01}}{=} 44\mathcal A35
\overset{\ref{Theorem_091116_01}}{=} 44\mathcal A45.$   


\item    \label{131216_01}
$43\mathcal A12
\overset{\ref{lemma_061216_01}}{=} 42\mathcal C12 
\overset{\ref{theo_081116_02}}{=} 42\mathcal C13 
\overset{\ref{theo_081116_02}}{=} 42\mathcal C15 
\overset{\ref{theo_081116_02}}{=}42\mathcal D12 
\overset{\ref{theo_081116_02}}{=} 42\mathcal F35
\overset{\ref{lemma_081116_02} (\ref{231116_07})}{=}42\mathcal D15
\overset{\ref{lemma_081116_03}}{=} 42\mathcal E13
\overset{\ref{lemma_081116_03}}{=} 42\mathcal F12
\overset{\ref{lemma_081116_03}}{=} 42\mathcal G13.$  

\item    $43\mathcal A23 
\overset{\ref{lemma_061216_02}}{=}31\mathcal A12 
\overset{\ref{Old_Lemma 10.3(1)}}{=} 41\mathcal A14 
\overset{\ref{lemma_131216_01} (\ref{281016_06})}{=} 41\mathcal A34
\overset{\ref{lemma_131216_01} (\ref{281016_06})}{=} 41\mathcal A45 
\overset{\ref{lemma_131216_01} (\ref{281016_02})}{=} 41\mathcal A24 
\overset{\ref{lemma_061216_02}}{=} 44\mathcal A25
\overset{\ref{Old_Lemma_10.3(2)}}{=} 42\mathcal A24
\overset{\ref{lemma_081116_02} (\ref{231116_06})}{=}42\mathcal A34 
\overset{\ref{theorem_081116_01}}{=} 42\mathcal A45 
\overset{\ref{theorem_081116_01}}{=} 42\mathcal B13
\overset{\ref{theorem_081116_01}}{=}  42\mathcal B15 
\overset{\ref{theorem_081116_01}}{=} 42\mathcal B23
\overset{\ref{theorem_081116_01}}{=} 42\mathcal B25
\overset{\ref{theorem_081116_01}}{=} 42\mathcal D24
\overset{\ref{theorem_081116_01}}{=} 42\mathcal D45
\overset{\ref{theorem_081116_01}}{=} 42\mathcal F25
\overset{\ref{theorem_081116_01}}{=} 42\mathcal G15 
\overset{\ref{lemma_081116_03}}{=} 42\mathcal G34   
\overset{\ref{lemma_081116_03}}{=} 42\mathcal G12.$ 


\item    $42\mathcal A12 
\overset{\ref{lemma_081116_02} (\ref{231116_06})}{=} 42\mathcal A13 
\overset{\ref{lemma_081116_02} (\ref{231116_06})}{=} 42\mathcal A15 
\overset{\ref{lemma_081116_06}}{=} 42\mathcal D23
\overset{\ref{lemma_081116_06}}{=} 42\mathcal D35.$   

\item    $43\mathcal F25
\overset{\ref{Ola_Lemma_10.3(5)}}{=}42\mathcal B35 
\overset{\ref{theo_081116_01}}{=}  42\mathcal B12 
\overset{\ref{theo_081116_01}}{=} 42\mathcal C23 
\overset{\ref{theo_081116_01}}{=} 42\mathcal C25 
\overset{\ref{theo_081116_01}}{=} 42\mathcal C35
\overset{\ref{theo_081116_01}}{=} 42\mathcal G25
\overset{\ref{lemma_081116_03}}{=}42\mathcal D13.$  

\item    $\mathcal S 
\overset{\ref{lemma_131216_01} (\ref{281016_04})}{=} 41\mathcal A23 
\overset{\ref{lemma_131216_01} (\ref{281016_05})}{=} 41\mathcal A13 
\overset{\ref{lemma_131216_01} (\ref{281016_05})}{=} 41\mathcal A15 
\overset{\ref{lemma_131216_01} (\ref{281016_05})}{=}41\mathcal A35 
\overset{\ref{lemma_131216_01} (\ref{281016_03})}{=} 41\mathcal A25 
\overset{\ref{lemma_131216_01} (\ref{281016_01})}{=} 41\mathcal A12
\overset{\ref{lemma_081116_02} (\ref{311016_03})}{=} 42\mathcal A23 
\overset{\ref{lemma_081116_02} (\ref{311016_03})}{=} 42\mathcal A25
\overset{\ref{lemma_081116_02} (\ref{311016_03})}{=} 42\mathcal A35 
\overset{\ref{lemma_081116_02} (\ref{311016_03})}{=} 42\mathcal D25  
\overset{\ref{lemma_081116_02} (\ref{311016_03})}{=} 42\mathcal E25.$  
\end{enumerate}
\end{proof}



We are now ready to present the main theorem of this paper.

%

\begin{Theorem} \label{theo_5_dist_var}
	\begin{thlist}
		\item[a]	The following are the 6 varieties defined, relative to $\mathcal S$, by the 155 weak associative laws of length $m \leq 4$ that are distinct from each other:
		\begin{center}
			$\mathcal{SL}$, $43\mathcal A12$, $43\mathcal A23$, $42\mathcal A12$, $43\mathcal F25$ and $\mathcal{S}$. 
		\end{center}
		\item[b]  They satisfy the following relationships: 
		\begin{enumerate}
			\item $\mathcal{SL} \subset 43\mathcal A23 \subset 43\mathcal F25 \subset \mathcal S$,
			\item $\mathcal{SL} \subset 43\mathcal A12 \subset 42\mathcal A12 \subset \mathcal S$,
			\item $\mathcal{BA} \subset 43\mathcal A12 \subset 43\mathcal F25$,
			\item $43\mathcal A12 \not\subseteq 43\mathcal A23$ and $43\mathcal A23 \not\subseteq 43\mathcal A12$,
			\item $42\mathcal A12 \not\subseteq 43F25$ and $43\mathcal F25 \not\subseteq 42\mathcal A12$.
		\end{enumerate}
	\end{thlist}	
\end{Theorem}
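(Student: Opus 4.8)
The enumeration half of part (a) --- that each of the $155$ weak associative subvarieties of $\mathcal S$ coincides with one of the six listed varieties --- is exactly the content of Proposition~\ref{prop_to_Main_Theo}, so nothing further is required for it. The real work lies in part (b): I would first establish all the inclusions, strict inclusions, and incomparabilities among the six varieties, and then deduce the pairwise distinctness asserted in (a) as a formal consequence of the resulting poset.

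For the inclusions, every relation that involves only the ``old'' varieties $\mathcal{SL}$, $43\mathcal A12$, $43\mathcal A23$, $43\mathcal F25$, $\mathcal{BA}$, $\mathcal S$ can be read off from Theorem~\ref{theo_4_dist_var}: the first three inclusions of (b)(1) and the last one come from \ref{theo_4_dist_var}(1) and \ref{theo_4_dist_var}(4), the first inclusion of (b)(2) from \ref{theo_4_dist_var}(2), and all of (b)(3) from \ref{theo_4_dist_var}(3). The incomparability (b)(4) follows from \ref{theo_4_dist_var}(5): since $43\mathcal A23\cap 43\mathcal A12=\mathcal{SL}$ while both factors strictly contain $\mathcal{SL}$ by (b)(1)--(b)(2), neither can contain the other. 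The only genuinely new inclusion is $43\mathcal A12\subseteq 42\mathcal A12$ in (b)(2). Here I would use the semantic description of $43\mathcal A12$: by Lemma~\ref{lema_1_identityC} and Theorem~\ref{081116_Theo1C}, $43\mathcal A12=42\mathcal C12$ is precisely the subvariety of $\mathcal S$ satisfying $0\to x\approx x\to x$, and this identity forces $(42A12)$ by a short reduction in which both sides collapse to $x\to y$ (using $x\to x\approx 0\to x$ together with Lemma~\ref{properties_of_I20_MC}(\ref{310516_01}) and Lemma~\ref{general_properties2}(\ref{311014_05}),(\ref{031114_04})). The remaining inclusion $42\mathcal A12\subseteq\mathcal S$ of (b)(2) is immediate, since $42\mathcal A12$ is by definition a subvariety of $\mathcal S$.

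The strictness and incomparability statements that involve the new node $42\mathcal A12$ all reduce to producing two separating algebras. An algebra $\mathbf P\in 42\mathcal A12\setminus 43\mathcal F25$ simultaneously witnesses $42\mathcal A12\not\subseteq 43\mathcal F25$ and, because $43\mathcal A12\subseteq 43\mathcal F25$ by (b)(3), the strictness $43\mathcal A12\subsetneq 42\mathcal A12$. Dually, an algebra $\mathbf Q\in 43\mathcal F25\setminus 42\mathcal A12$ witnesses $43\mathcal F25\not\subseteq 42\mathcal A12$ and, because $43\mathcal F25\subseteq\mathcal S$, the strictness $42\mathcal A12\subsetneq\mathcal S$. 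Thus (b)(5) and the strict parts of (b)(2) all follow from $\mathbf P$ and $\mathbf Q$. I would locate these two finite algebras with Mace4 and verify them by direct computation: check that each lies in $\mathcal S$ (i.e.\ satisfies $x''\approx x$ and $x\land y\approx y\land x$) and then evaluate the relevant defining identity --- $(42A12)$ for $\mathbf P$ and $0\to(x\to x)\approx x\to x$ (the characterization of $43\mathcal F25$ from Lemma~\ref{lema_1_identityB} and Theorem~\ref{031116_Theo1B}) for $\mathbf Q$ --- to confirm the required satisfaction and failure.

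Finally, having the four strict inclusions, the two incomparabilities, and the old strict inclusions of Theorem~\ref{theo_4_dist_var}, the six varieties are pairwise distinct: for any pair this is read off the poset (for instance, $43\mathcal A23\neq 42\mathcal A12$ because $43\mathcal A23\subseteq 43\mathcal F25$ whereas $42\mathcal A12\not\subseteq 43\mathcal F25$), which completes part (a). I expect the main obstacle to be the construction and verification of the separating algebras $\mathbf P$ and $\mathbf Q$ realizing the incomparability of $42\mathcal A12$ with $43\mathcal F25$; everything else is either a citation to Theorem~\ref{theo_4_dist_var} and Proposition~\ref{prop_to_Main_Theo} or a short equational derivation.
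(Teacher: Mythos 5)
Your proposal is correct, and it follows the paper's overall skeleton: part (a) is delegated to Proposition~\ref{prop_to_Main_Theo}, the inclusions among the ``old'' varieties are cited from Theorem~\ref{theo_4_dist_var}, the one new inclusion $43\mathcal A12 \subseteq 42\mathcal A12$ is proved equationally via the characterization $0\to x\approx x\to x$ of $42\mathcal C12$, and two separating algebras handle every strictness and incomparability claim involving $42\mathcal A12$. Two of your sub-arguments genuinely diverge from the paper's, both favorably. First, for (b)(4) the paper exhibits explicit counterexamples (the algebra $\mathbf{2_b}$ for $43\mathcal A12 \not\subseteq 43\mathcal A23$ and a four-element algebra for the converse), whereas you derive the incomparability order-theoretically from Theorem~\ref{theo_4_dist_var}(5): if either variety contained the other, the intersection $\mathcal{SL}$ would coincide with the smaller one, contradicting the strictness of $\mathcal{SL}\subset 43\mathcal A12$ and $\mathcal{SL}\subset 43\mathcal A23$; this is cleaner and requires no new computation. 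Second, for $43\mathcal A12\subseteq 42\mathcal A12$ the paper also passes through $42\mathcal C12$ and the identity $0\to x\approx x\to x$, but then runs a long derivation threading through (I) and the identity $(x\to x)\to y'\approx((x\to x)\to y)'$; your reduction --- both sides of (42A12) collapse to $x\to y$ using Lemma~\ref{properties_of_I20_MC}(\ref{310516_01}), Lemma~\ref{general_properties2}(\ref{311014_05}) and Lemma~\ref{general_properties2}(\ref{031114_04}) --- is shorter and checks out line by line. One small citation slip: the equality $43\mathcal A12=42\mathcal C12$ is Lemma~\ref{lemma_061216_01} (or Proposition~\ref{prop_to_Main_Theo}(\ref{131216_01})); Lemma~\ref{lema_1_identityC} and Theorem~\ref{081116_Theo1C} only identify $42\mathcal C12$ with the subvariety of $\mathcal S$ satisfying $0\to x\approx x\to x$. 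The one item you leave unexhibited --- the algebras $\mathbf P\in 42\mathcal A12\setminus 43\mathcal F25$ and $\mathbf Q\in 43\mathcal F25\setminus 42\mathcal A12$ --- is supplied by the paper exactly as you anticipate: a three-element and a four-element algebra, each doing double duty for (b)(5) and for the strict inclusions $43\mathcal A12\subsetneq 42\mathcal A12\subsetneq\mathcal S$ in (b)(2), so your planned Mace4 search would indeed succeed.
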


\begin{proof} We first prove (b):
\begin{enumerate}
	\item Follows from Theorem \ref{theo_4_dist_var}. 
	
	\item The statement $\mathcal{SL} \subset 43\mathcal A12$ follows from Theorem \ref{theo_4_dist_var}. Then it remains to check that $43\mathcal A12 \subset 42\mathcal A12 \subset \mathcal S$. 
	
	Let $\mathbf A \in 43\mathcal A12$ and $a,b \in A$. By Proposition \ref{prop_to_Main_Theo} (\ref{131216_01}), $\mathbf A \in 42\mathcal C12$.
	Observe that 
	\begin{equation}  \label{141116_04}
	42\mathcal C12 \models 0 \to x \approx x \to x
	\end{equation}
	 by Lemma \ref{lema_1_identityC}. Then, using Lemma \ref{lemma_081116_A} (\ref{231116_03}), we have $42\mathcal C12 \models 0 \to (x \to x) \approx x \to x$. Hence, by Lemma \ref{lemma_071116_A} (\ref{081116_02}),
	 \begin{equation}  \label{141116_03}
	 \mathbf A \models (x \to x) \to y' \approx ((x \to x) \to y)'
	 \end{equation}
	 Hence
	 \noindent $a \to ((a \to a) \to b)$
	 $\overset{   \ref{properties_of_I20_MC} (\ref{310516_01}) 
	 }{=}  (a \to a) \to (a \to b)  $
	 $\overset{   \ref{properties_of_I20_MC} (\ref{310516_02}) 
	 }{=}  (a \to a) \to (b' \to a') $
	 $\overset{ x \approx x'' 
	 }{=}  ((a \to a) \to (b' \to a'))'' $
	 $\overset{  (\ref{141116_03}) 
	 }{=}  ((a \to a) \to (b' \to a')')' $
	 $\overset{   \ref{properties_of_I20_MC} (\ref{310516_02}) 
	 }{=}  ((a' \to a') \to (b' \to a')')' $
	 $\overset{  (\ref{141116_04}) 
	 }{=}  ((0 \to a') \to (b' \to a')')' $
	 $\overset{   \ref{properties_of_I20_MC} (\ref{310516_02}) 
	 }{=}  ((a \to 0') \to (b' \to a')')' $
	 $\overset{  (I) 
	 }{=}  (0' \to b') \to a' $
	 $\overset{   \ref{general_properties_equiv} (\ref{TXX}) 
	 }{=}  b' \to a' $
	 $\overset{   \ref{properties_of_I20_MC} (\ref{310516_02}) 
	 }{=}  a \to b $
	 $\overset{   \ref{general_properties2} (\ref{031114_04}) 
	 }{=}  a \to (a \to (a \to b)) $.
	 Thus, $\mathbf A \in 42\mathcal A12$.
	 The following examples show, respectively, that $42\mathcal A12 \not= 43\mathcal A12$ and $42\mathcal A12 \not= \mathcal S$. \\
	 
	 \begin{minipage}{0.5 \textwidth}
	 \begin{tabular}{r|rrr}
	 	$\to$: & 0 & 1 & 2\\
	 	\hline
	 	0 & 2 & 2 & 2 \\
	 	1 & 1 & 1 & 2 \\
	 	2 & 0 & 1 & 2
	 \end{tabular}
 \end{minipage}
 \begin{minipage}{0.5 \textwidth}
	 \begin{tabular}{r|rrrr}
	 	$\to$: & 0 & 1 & 2 & 3\\
	 	\hline
	 	0 & 0 & 1 & 2 & 3 \\
	 	1 & 2 & 3 & 2 & 3 \\
	 	2 & 1 & 1 & 3 & 3 \\
	 	3 & 3 & 3 & 3 & 3
	 \end{tabular} 
\end{minipage}
	 
\item Follows from Theorem \ref{theo_4_dist_var}. 

\item  
The algebra $\mathbf{2_b}$ shows that $43\mathcal A12 \not\subseteq 43\mathcal A23$, whilst the following algebra shows that 
$43\mathcal A23 \not\subseteq 43\mathcal A12$.\\




\begin{tabular}{r|rrrr}
	$\to$: & 0 & 1 & 2 & 3\\
	\hline
	0 & 0 & 1 & 2 & 3 \\
	1 & 2 & 3 & 2 & 3 \\
	2 & 1 & 1 & 3 & 3 \\
	3 & 3 & 3 & 3 & 3
\end{tabular}

\item The following algebras show that $42\mathcal A12 \not\subseteq 43\mathcal F25$ and $43\mathcal F25 \not\subseteq 42\mathcal A12$, respectively.\\

\begin{minipage}{0.5 \textwidth}
\begin{tabular}{r|rrr}
	$\to$: & 0 & 1 & 2\\
	\hline
	0 & 2 & 2 & 2 \\
	1 & 1 & 1 & 2 \\
	2 & 0 & 1 & 2
\end{tabular}
\end{minipage}
\begin{minipage}{0.5 \textwidth}
\begin{tabular}{r|rrrr}
	$\to$: & 0 & 1 & 2 & 3\\
	\hline
	0 & 0 & 1 & 2 & 3 \\
	1 & 2 & 3 & 2 & 3 \\
	2 & 1 & 1 & 3 & 3 \\
	3 & 3 & 3 & 3 & 3
\end{tabular}
\end{minipage}


\end{enumerate}	
	The proof of the theorem is now complete since (a) is an immediate consequence of Proposition \ref{prop_to_Main_Theo} and (b).
\end{proof}

The Hasse diagram of the poset of the weak associative subvarieties of $\mathcal S$ of length $\leq 4$, together with the variety $\mathcal{BA}$:

\setlength{\unitlength}{0.5cm}
\begin{picture}(28,22)(0,0)


\put(12,4){\circle*{0.3}}
\put(13,4){$\mathcal T$}
\put(8,8){\circle*{0.3}}
\put(6,8){$\mathcal{BA}$}
\put(16,8){\circle*{0.3}}
\put(17,8){$\mathcal{SL}$}
\put(12,12){\circle*{0.3}}
\put(8.5,12){$43 \mathcal A12$}
\put(20,12){\circle*{0.3}}
\put(21,12){$43\mathcal A23$}
\put(8,16){\circle*{0.3}}
\put(4.5,16){$42\mathcal A12$}
\put(16,16){\circle*{0.3}}
\put(17,16){$43\mathcal F25$}
\put(12,20){\circle*{0.3}}
\put(13,20){$\mathcal S$}


\put(12,4){\line(1,1){8}}
\put(8,8){\line(1,1){8}}
\put(8,16){\line(1,1){4}}


\put(12,20){\line(1,-1){8}}
\put(8,16){\line(1,-1){8}}
\put(8,8){\line(1,-1){4}}

\end{picture}

We conclude the paper with the remark that it would be of interest to investigate the weak associative identities, and in particular, the identities of Bol-Moufang type, relative to $\mathcal{I}$ and other (important) subvarieties of $\mathcal{I}$.\\

\ \\ \ \\
\noindent{\bf Acknowledgment:} 
The first author wants to thank the
institutional support of CONICET  (Consejo Nacional de Investigaciones Cient\'ificas y T\'ecnicas).   
\ \\

\noindent {\bf Compliance with Ethical Standards:}\\ 

\noindent {\bf Conflict of Interest:} The first author declares that he has no conflict of interest. The second author  declares that he has no conflict of interest. \\

\noindent {\bf Ethical approval:}
This article does not contain any studies with human participants or animals performed by any of the authors. \\

	\noindent {\bf Funding:}  
	The work of Juan M. Cornejo was supported by CONICET (Consejo Nacional de Investigaciones Cientificas y Tecnicas) and Universidad Nacional del Sur. 
	Hanamantagouda P. Sankappanavar did not receive any specific grant from funding agencies in the public, commercial, or not-for-profit sectors.

\small

\ \\

\noindent {\sc Juan M. Cornejo}\\
Departamento de Matem\'atica\\
Universidad Nacional del Sur\\
Alem 1253, Bah\'ia Blanca, Argentina\\
INMABB - CONICET\\

\noindent jmcornejo@uns.edu.ar

\vskip 1.4cm

\noindent {\sc Hanamantagouda P. Sankappanavar}\\
Department of Mathematics\\
State University of New York\\
New Paltz, New York 12561\\
U.S.A.\\

\noindent sankapph@newpaltz.edu

\end{document}